\setlist[enumerate]{itemsep=0.5ex}
\theoremstyle{definition}
\newtheorem{definition}{Definition}[section]
\newtheorem{example}[definition]{Example}
\newtheorem{remark}[definition]{Remark}
\theoremstyle{plain}
\newtheorem{proposition}[definition]{Proposition}
\newtheorem{lemma}[definition]{Lemma}
\newtheorem{theorem}[definition]{Theorem}
\newtheorem{corollary}[definition]{Corollary}
\newcommand{\N}{\mathbb{N}}
\newcommand{\C}{\mathbb{C}}
\newcommand{\CC}{\mathcal{C}}
\newcommand{\DD}{\mathcal{D}}
\newcommand{\OO}{\mathcal{O}}
\newcommand{\PP}{\mathcal{P}}
\newcommand{\ii}{{\boldsymbol{i}}}
\newcommand{\jj}{{\boldsymbol{j}}}
\newcommand{\nn}{{\boldsymbol{n}}}
\newcommand{\abs}[1]{\left\lvert#1\right\rvert}
\newcommand{\angles}[1]{\left\langle#1\right\rangle}
\newcommand{\circled}[1]{\tikz[baseline=(char.base)]{\node[shape=circle,draw,inner sep=0em] (char) {#1};}}
\DeclareMathOperator{\otop}{\circled{\resizebox{!}{1ex}{$\top$}}}
\DeclareMathOperator{\obot}{\circled{\resizebox{!}{1ex}{$\bot$}}}
\newcommand{\colors}{{{\left\{\circ,\bullet\right\}}^*}}
\DeclareMathOperator{\id}{id}
\DeclareMathOperator{\Ob}{Ob}
\DeclareMathOperator{\Hom}{Hom}
\DeclareMathOperator{\Span}{span}
\DeclareMathOperator{\Flat}{Flat}
\DeclareMathOperator{\Perm}{Perm}
\DeclareMathOperator{\Rep}{Rep}
\def\partId{\partition[2d]{%
    \line{0}{0}{0}{0}{1}{0}
    \point{0}{0}{0}{white}
    \point{0}{1}{0}{white}
}}
\def\partIdB{\partition[2d]{%
    \line{0}{0}{0}{0}{1}{0}
    \point{0}{0}{0}{black}
    \point{0}{1}{0}{black}
}}
\def\partIdBW{\partition[2d]{%
    \line{0}{0}{0}{0}{1}{0}
    \point{0}{0}{0}{white}
    \point{0}{1}{0}{black}
}}
\def\partIdWBW{\partition[2d]{%
    \line{0}{0}{0}{0}{1}{0}
    \line{1}{0}{0}{1}{1}{0}
    \line{2}{0}{0}{2}{1}{0}
    \point{0}{0}{0}{white}
    \point{0}{1}{0}{white}
    \point{1}{0}{0}{black}
    \point{1}{1}{0}{black}
    \point{2}{0}{0}{white}
    \point{2}{1}{0}{white}
}}
\def\partPair{\partition[2d]{%
    \line{0}{0}{0}{0}{0.5}{0}
    \line{0}{0.5}{0}{1}{0.5}{0}
    \line{1}{0.5}{0}{1}{0}{0}
    \point{0}{0}{0}{white}
    \point{1}{0}{0}{white}
}}
\def\partPairWB{\partition[2d]{%
    \line{0}{0}{0}{0}{0.5}{0}
    \line{0}{0.5}{0}{1}{0.5}{0}
    \line{1}{0.5}{0}{1}{0}{0}
    \point{0}{0}{0}{white}
    \point{1}{0}{0}{black}
}}
\def\partPairBW{\partition[2d]{%
    \line{0}{0}{0}{0}{0.5}{0}
    \line{0}{0.5}{0}{1}{0.5}{0}
    \line{1}{0.5}{0}{1}{0}{0}
    \point{0}{0}{0}{black}
    \point{1}{0}{0}{white}
}}
\def\partPairUnflat{\partition[3d]{%
    %
    \line{0}{0}{0}{0}{0.5}{0}
    \line{0}{0.5}{0}{0}{0.5}{1}
    \line{0}{0.5}{1}{0}{0}{1}
    \point{0}{0}{0}{white}
    \point{0}{0}{1}{white}
}}
\def\partPairUnflatIds{\partition[3d]{%
    %
    \line{0}{0}{0}{0}{1}{0}
    \line{0}{1}{1}{1}{0}{1}
    \line{0}{0}{1}{0}{0.5}{1}
    \line{0}{0.5}{1}{1}{0.5}{0}
    \line{1}{0.5}{0}{1}{0}{0}
    \point{0}{0}{0}{white}
    \point{0}{0}{1}{white}
    \point{1}{0}{0}{white}
    \point{1}{0}{1}{white}
    \point{0}{1}{0}{white}
    \point{0}{1}{1}{white}
}}
\def\partSingles{\partition[2d]{%
    \line{0}{0}{0}{0}{0.5}{0}
    \line{1}{0}{0}{1}{0.5}{0}
    \point{0}{0}{0}{white}
    \point{1}{0}{0}{white}
}}
\def\partSinglesUnflat{\partition[3d]{%
    \line{0}{0}{0}{0}{0.5}{0}
    \line{0}{0}{1}{0}{0.5}{1}
    \point{0}{0}{0}{white}
    \point{0}{0}{1}{white}
}}
\def\partSinglesUnflatIds{\partition[3d]{%
    \line{0}{0}{0}{0}{1}{0}
    \line{0}{1}{1}{1}{0}{1}
    \line{0}{0}{1}{0}{0.5}{1}
    \line{1}{0}{0}{1}{0.5}{0}
    \point{0}{0}{0}{white}
    \point{0}{0}{1}{white}
    \point{1}{0}{0}{white}
    \point{1}{0}{1}{white}
    \point{0}{1}{0}{white}
    \point{0}{1}{1}{white}
}}
\def\partCross{\partition[2d]{%
    \line{0}{0}{0}{1}{1}{0}
    \line{0}{1}{0}{1}{0}{0}
    \point{0}{0}{0}{white}
    \point{0}{1}{0}{white}
    \point{1}{0}{0}{white}
    \point{1}{1}{0}{white}
}}
\def\partCrossUnflat{\partition[3d]{%
    \line{0}{0}{0}{0}{1}{1}
    \line{0}{1}{0}{0}{0}{1}
    \point{0}{0}{0}{white}
    \point{0}{1}{0}{white}
    \point{0}{0}{1}{white}
    \point{0}{1}{1}{white}
}}
\def\partCrossUnflatIds{\partition[3d]{%
    \line{1}{0}{0}{0}{1}{1}
    \line{1}{1}{0}{0}{0}{1}
    \line{0}{0}{0}{0}{1}{0}
    \line{1}{0}{1}{1}{1}{1}
    \point{0}{0}{0}{white}
    \point{0}{1}{0}{white}
    \point{0}{0}{1}{white}
    \point{0}{1}{1}{white}
    \point{1}{0}{0}{white}
    \point{1}{1}{0}{white}
    \point{1}{0}{1}{white}
    \point{1}{1}{1}{white}  
}}
\def\partFour{\partition[2d]{%
    \line{0}{0}{0}{0}{0.35}{0}
    \line{1}{0}{0}{1}{0.35}{0}
    \line{0}{1}{0}{0}{0.65}{0}
    \line{1}{1}{0}{1}{0.65}{0}
    \line{0}{0.35}{0}{1}{0.35}{0}
    \line{0}{0.65}{0}{1}{0.65}{0}
    \line{0.5}{0.35}{0}{0.5}{0.65}{0}
    \point{0}{0}{0}{white}
    \point{0}{1}{0}{white}
    \point{1}{0}{0}{white}
    \point{1}{1}{0}{white}
}}
\def\partFourUnflat{\partition[3d]{%
    \line{0}{0}{0}{0}{0.35}{0}
    \line{0}{0}{1}{0}{0.35}{1}
    \line{0}{1}{0}{0}{0.65}{0}
    \line{0}{1}{1}{0}{0.65}{1}
    \line{0}{0.35}{0}{0}{0.35}{1}
    \line{0}{0.65}{0}{0}{0.65}{1}
    \line{0}{0.35}{0.5}{0}{0.65}{0.5}
    \point{0}{0}{0}{white}
    \point{0}{1}{0}{white}
    \point{0}{0}{1}{white}
    \point{0}{1}{1}{white}
}}
\def\partFourUnflatIds{\partition[3d]{%
    \line{1}{0}{0}{1}{0.35}{0}
    \line{0}{0}{1}{0}{0.35}{1}
    \line{1}{1}{0}{1}{0.65}{0}
    \line{0}{1}{1}{0}{0.65}{1}
    \line{1}{0.35}{0}{0}{0.35}{1}
    \line{1}{0.65}{0}{0}{0.65}{1}
    \line{0.5}{0.35}{0.5}{0.5}{0.65}{0.5}
    \line{0}{0}{0}{0}{1}{0}
    \line{1}{0}{1}{1}{1}{1}
    \point{0}{0}{0}{white}
    \point{0}{1}{0}{white}
    \point{0}{0}{1}{white}
    \point{0}{1}{1}{white}
    \point{1}{0}{0}{white}
    \point{1}{1}{0}{white}
    \point{1}{0}{1}{white}
    \point{1}{1}{1}{white}
}}
\def\partCrossSingles{\partition[2d]{%
    \line{0}{1}{0}{1}{0}{0}
    \line{0}{0}{0}{0}{0.35}{0}
    \line{1}{1}{0}{1}{0.65}{0}
    \point{0}{0}{0}{white}
    \point{0}{1}{0}{white}
    \point{1}{0}{0}{white}
    \point{1}{1}{0}{white}
}}
\def\partCrossSinglesUnflat{\partition[3d]{%
    \line{0}{1}{0}{0}{0}{1}
    \line{0}{0}{0}{0}{0.35}{0}
    \line{0}{1}{1}{0}{0.65}{1}
    \point{0}{0}{0}{white}
    \point{0}{1}{0}{white}
    \point{0}{0}{1}{white}
    \point{0}{1}{1}{white}
}}
\def\partCrossSinglesUnflatIds{\partition[3d]{%
    \line{0}{0}{0}{0}{1}{0}
    \line{0}{1}{1}{1}{0}{0}
    \line{0}{0}{1}{0}{0.35}{1}
    \line{1}{1}{0}{1}{0.65}{0}
    \line{1}{0}{1}{1}{1}{1}
    \point{0}{0}{0}{white}
    \point{0}{1}{0}{white}
    \point{0}{0}{1}{white}
    \point{0}{1}{1}{white}
    \point{1}{0}{0}{white}
    \point{1}{1}{0}{white}
    \point{1}{0}{1}{white}
    \point{1}{1}{1}{white}
}}
\def\partTrippleCross{\partition[2d]{%
    \line{1}{0}{0}{1}{1}{0}
    \line{0}{1}{0}{2}{0}{0}
    \line{0}{0}{0}{0}{0.5}{0}
    \line{3}{0}{0}{3}{0.5}{0}
    \line{0}{0.5}{0}{3}{0.5}{0}
    \point{0}{0}{0}{white}
    \point{0}{1}{0}{white}
    \point{1}{0}{0}{white}
    \point{1}{1}{0}{white}
    \point{2}{0}{0}{white}
    \point{3}{0}{0}{white}
}}
\def\partTrippleCrossUnflat{\partition[3d]{%
    \line{0}{0}{1}{0}{1}{1}
    \line{0}{0}{0}{0}{0.5}{0}
    \line{1}{0}{1}{1}{0.5}{1}
    \line{0}{0.5}{0}{1}{0.5}{1}
    \point{0}{0}{1}{white}
    \line{0}{1}{0}{1}{0}{0}
    \point{0}{0}{0}{white}
    \point{0}{1}{0}{white}
    \point{0}{1}{1}{white}
    \point{1}{0}{0}{white}
    \point{1}{0}{1}{white}
}}
\def\partTrippleCrossUnflatIds{\partition[3d]{%
    \line{0}{0}{0}{0}{1}{0}
    \line{0}{0}{1}{0}{0.5}{1}
    \line{0}{1}{1}{1}{0}{1}
    \line{1}{0}{0}{1}{1}{0}
    \line{1}{1}{1}{2}{0}{1}
    \line{2}{0}{0}{2}{0.5}{0}
    \line{0}{0.5}{1}{2}{0.5}{0}
    \point{0}{0}{0}{white}
    \point{0}{1}{0}{white}
    \point{1}{0}{0}{white}
    \point{1}{1}{0}{white}
    \point{2}{0}{0}{white}
    \point{0}{0}{1}{white}
    \point{0}{1}{1}{white}
    \point{1}{0}{1}{white}
    \point{1}{1}{1}{white}
    \point{2}{0}{1}{white}
}}
\def\partChair{\partition[2d]{%
    \line{0}{0}{0}{0}{1}{0}
    \line{1}{0}{0}{1}{0.5}{0}
    \line{1}{0.5}{0}{0}{0.5}{0}
    \point{0}{0}{0}{white}
    \point{0}{1}{0}{white}
    \point{1}{0}{0}{white}
}}
\begin{document}

\title{Projective Versions of Spatial Partition Quantum Groups}
\author{Nicolas Faroß}
\address{Saarland University, Fachbereich Mathematik, Postfach 151150, 66041 Saarbr\"ucken, Germany}
\email{faross@math.uni-sb.de}
\date{\today}
\keywords{easy quantum groups, spatial partitions, projective quantum groups, Woronowicz Tannaka-Krein duality}
\thanks{The author thanks his supervisor Moritz Weber for many helpful comments and suggestions.
This article is part of the author's PhD thesis.
This work is a contribution to the SFB-TRR 195.}

\begin{abstract}
\noindent 
We generalize categories of spatial partitions in the sense of Cébron-Weber by introducing 
new base partitions. This allows us to construct additional examples of free orthogonal quantum groups
but yields the same class of spatial partition quantum groups as before.
Further, we use these new base partitions to show that the class of spatial partition quantum groups
is closed under taking projective versions and in particular contains the projective version of all easy quantum groups.
As an application, we determine the quantum groups corresponding to the categories of all spatial pair partitions
and give explicit descriptions of the projective versions of easy quantum groups in terms of spatial partitions.
\end{abstract}

\maketitle

\section{Introduction}\label{sec:intro}

Spatial partition quantum groups were first introduced
by Cébron-Weber in~\cite{cebron16} and are examples of compact quantum 
groups in the sense of Woronowicz~\cite{woronowicz87,woronowicz91}.
They generalize easy quantum groups by Banica-Speicher~\cite{banica09}
and are obtained by replacing two-dimensional partitions with three-dimensional spatial partitions.

A spatial partition on $m$ levels $p \in \PP^{(m)}$ consists of $k \cdot m$ upper points and $\ell \cdot m$ 
lower points that are partitioned into disjoint subsets by lines. Further, we allow both 
upper and lower points to be colored uniformly along the levels. For example, we have
\[
  {\partition[2d]{%
    \line{0}{0}{0}{0}{1}{0}
    \line{1}{1}{1}{1}{0.5}{0}
    \line{1}{0.5}{0}{0}{0.5}{0}
    \point{0}{0}{0}{black}
    \point{0}{1}{0}{white}
    \point{1}{1}{0}{white}
  }} \in \PP^{(1)},
  \qquad\qquad
  {\partition[3d]{%
    \line{0}{0.65}{0}{0}{1}{0}
    \line{0}{0}{0}{0}{0.35}{0}
    \line{0}{0.35}{0}{0}{0.35}{1}
    \line{0}{0}{1}{0}{1}{1}
    \line{1}{0}{1}{1}{0.35}{1}
    \line{1}{0}{0}{1}{0.35}{0}
    \line{1}{0.35}{1}{1}{0.35}{0}
    \point{0}{0}{0}{white}
    \point{0}{0}{1}{white}
    \point{1}{0}{0}{white}
    \point{1}{0}{1}{white}
    \point{0}{1}{0}{black}
    \point{0}{1}{1}{black}}} \in \PP^{(2)},
  \qquad\qquad
  {\partition[3d]{%
  \line{0}{0}{0}{0}{1}{0}
  \line{0}{0}{1}{0}{0.35}{1}
  \line{1}{0}{0}{1}{0.35}{0}
  \line{0}{0.35}{1}{1}{0.35}{0}
  \line{0}{0}{2}{0}{1}{2}
  \line{1}{0}{2}{1}{0.35}{2}
  \line{0}{0.35}{2}{1}{0.35}{2}
  \point{0}{1}{0}{white}
  \point{0}{1}{2}{white}
  \point{0}{0}{0}{white}
  \point{0}{0}{1}{white}
  \point{0}{0}{2}{white}
  \point{1}{0}{0}{black}
  \point{1}{0}{1}{black}
  \point{1}{0}{2}{black}
  \line{0}{1}{1}{1}{0}{1}
  \point{0}{1}{1}{white}}} \in \PP^{(3)}.
\]

Given spatial partitions on the same number of levels, we can construct new spatial partitions by forming their tensor product, involution and composition. 
A category of spatial partitions in the sense of Cébron-Weber is a set of spatial partitions 
that is closed under these operations and contains the base partitions $\partId^{(m)}$, $\partIdB^{(m)}$
and $\partPairWB^{(m)}$, $\partPairBW^{(m)}$. Here, $p^{(m)} \in \PP^{(m)}$ denotes the spatial partition that is obtained by 
placing $m$ copies of the partition $p$ along each level.

By realizing spatial partitions as linear operators, categories of spatial partitions
give rise to concrete $C^*$-tensor categories, which then correspond to spatial partition quantum groups via Woronowicz Tannaka-Krein duality~\cite{woronowicz88}.
In the case of partitions on one level, these quantum groups are exactly unitary easy quantum groups~\cite{banica09, tarrago17}  
and include for example the free orthogonal and free unitary quantum groups~\cite{wang95}, the quantum permutation group~\cite{wang98} 
or the hyperoctahedral quantum group~\cite{banica07}.

Since easy quantum groups are based on partitions, they form a concrete class of quantum groups that can be studied and classified 
using combinatorics~\cite{freslon16a,gromada20b,freslon23, banica10a,weber13,raum14}. In the case of orthogonal easy quantum groups, the classification has been completed in~\cite{raum16}, whereas
the classification in the unitary case is still ongoing. See for 
example~\cite{tarrago18, freslon19, gromada18} or the more recent work by Mang~\cite{mang20,mang21c,mang21a,mang21b}. 

Spatial partition quantum groups have so far been studied in~\cite{cebron16,faross22}. In~\cite{cebron16}, Cébron-Weber
show that these quantum groups are closed under glued products, which implies that the 
class of spatial partition quantum groups is strictly larger than the class of easy quantum groups. Further, they 
provide a partial classification of categories of spatial pair partitions on two levels and 
discuss links to the quantum symmetries of finite-dimensional $C^*$-algebras.

In~\cite{faross22}, the author shows that the category $P^{(2)}_{2}$ of all spatial pair partitions on two levels 
gives rise to the classical projective orthogonal group $PO_n$ yielding a simpler example
of a non-easy spatial partition quantum group. Additionally, an explicit description
of the category of spatial partitions corresponding to the quantum symmetry group 
of $M_n(\C) \otimes \C^m$ is given.

\subsection*{Main results}
In~\cite{cebron16}, Cébron-Weber ask about a 
generalization of the base partitions $\partPairWB^{(m)}$ and $\partPairBW^{(m)}$
that still allows the construction of compact matrix quantum groups from 
categories of spatial partitions.
We answer this question by showing that the previous two base partitions
can be replaced with any pairs of spatial partitions $r$ and $s$ satisfying the conjugate equations
\[
  \left[ r^* \otimes \, {{\partId}^{(m)}} \right] \cdot \left[ \, {{\partId}^{(m)}} \otimes s\right] = {{\partId}^{(m)}},
  \qquad 
  \left[s^* \otimes \, {{\partIdB}^{(m)}} \right] \cdot \left[ \, {{\partIdB}^{(m)}} \otimes r \right] = {{\partIdB}^{(m)}}.
\]
In the case of one level, the partitions $r = \partPairWB$ and $s = \partPairBW$ are the only solutions to these equations.
However, in the case $m \geq 2$, we do not only obtain the previous base partitions $r = \partPairWB^{(m)}$ and $s = \partPairBW^{(m)}$, but also twisted versions like
\begin{align*}
  r = {\partition[3d]{
    \line{0}{0}{0}{0}{0.5}{0}
    \line{0}{0}{1}{0}{0.5}{1}
    \line{1}{0}{0}{1}{0.5}{0}
    \line{1}{0}{1}{1}{0.5}{1}
    \line{0}{0.5}{0}{1}{0.5}{1}
    \line{0}{0.5}{1}{1}{0.5}{0}
    \point{0}{0}{0}{white}
    \point{0}{0}{1}{white}
    \point{1}{0}{0}{black}
    \point{1}{0}{1}{black}
  }}, \quad s = {\partition[3d]{
    \line{0}{0}{0}{0}{0.5}{0}
    \line{0}{0}{1}{0}{0.5}{1}
    \line{1}{0}{0}{1}{0.5}{0}
    \line{1}{0}{1}{1}{0.5}{1}
    \line{0}{0.5}{0}{1}{0.5}{1}
    \line{0}{0.5}{1}{1}{0.5}{0}
    \point{0}{0}{0}{black}
    \point{0}{0}{1}{black}
    \point{1}{0}{0}{white}
    \point{1}{0}{1}{white}
  }}
  &&
  \text{and}
  &&
  r = {\partition[3d]{
    \line{0}{0}{2}{0}{0.5}{2}
    \line{0}{0}{1}{0}{0.5}{1}
    \line{0}{0}{0}{0}{0.5}{0}
    \line{1}{0}{0}{1}{0.5}{0}
    \line{1}{0}{1}{1}{0.5}{1}
    \line{1}{0}{2}{1}{0.5}{2}
    \point{0}{0}{2}{white}
    \line{0}{0.5}{2}{1}{0.5}{1}
    \line{0}{0.5}{1}{1}{0.5}{0}
    \line{0}{0.5}{0}{1}{0.5}{2}
    \point{0}{0}{0}{white}
    \point{0}{0}{1}{white}
    \point{1}{0}{0}{black}
    \point{1}{0}{1}{black}
    \point{1}{0}{2}{black}
  }}, \quad s = {\partition[3d]{
    \line{0}{0}{0}{0}{0.5}{0}
    \line{0}{0}{1}{0}{0.5}{1}
    \line{0}{0}{2}{0}{0.5}{2}
    \line{1}{0}{0}{1}{0.5}{0}
    \line{1}{0}{1}{1}{0.5}{1}
    \line{1}{0}{2}{1}{0.5}{2}
    \point{0}{0}{2}{black}
    \line{0}{0.5}{0}{1}{0.5}{1}
    \line{0}{0.5}{1}{1}{0.5}{2}
    \line{0}{0.5}{2}{1}{0.5}{0}
    \point{0}{0}{0}{black}
    \point{0}{0}{1}{black}
    \point{1}{0}{0}{white}
    \point{1}{0}{1}{white}
    \point{1}{0}{2}{white}
  }}.
\end{align*}
We refer to \Cref{prop:conj-eq-solutions} for a characterization of all possible solutions 
to these conjugate equations in the context of spatial partitions.

Using our new base partitions, we can now construct
additional examples of free orthogonal quantum groups in the 
sense of Van Daele-Wang~\cite{daele96}.
In the notation of~\cite{banica97}, these quantum groups are given by $O^+(F_\sigma)$ and with parameters
\[
  F_\sigma \colon {(\C^n)}^{\otimes m} \to {(\C^n)}^{\otimes m}, 
  \quad
  F_\sigma(e_{i_1} \otimes \dots \otimes e_{i_m})
  = e_{i_{\sigma^{-1}(1)}} \otimes \dots \otimes e_{i_{\sigma^{-1}(m)}}
\]
for all $n \in \N$ and $\sigma \in S_m$. See also \Cref{prop:universal-are-spatial} for more details and the precise statement in full generality. 

Although we can construct new examples of spatial partition quantum groups, it turns out that 
using our new base partitions, we obtain the same class of quantum groups as defined by Cébron-Weber.

\begingroup
\def\thetheorem{1}
\begin{theorem}[\Cref{corr:assume-base-partitions}]
Let $G$ be a spatial partition quantum group defined by any pair of spatial base partitions
satisfying the conjugate equations. Then $G$ is equivalent to a spatial
partition quantum group in the sense of Cébron-Weber defined by the base partitions
${\partPairWB}^{(m)}$ and ${\partPairBW}^{(m)}$.
\end{theorem}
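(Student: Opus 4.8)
The plan is to reduce the given base partitions to the standard ones by an explicit permutation of the $m$ levels, and then to lift this permutation to an isomorphism of the associated compact quantum groups via Woronowicz Tannaka-Krein duality. First I would invoke \Cref{prop:conj-eq-solutions}, which characterizes the solutions $(r, s)$ as twisted versions of the standard pair partitions ${\partPairWB}^{(m)}$ and ${\partPairBW}^{(m)}$; in particular, there is a permutation $\sigma \in S_m$ such that relabeling the $m$ levels according to $\sigma$ transforms $(r, s)$ into the standard pairs.

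This relabeling is encoded by the level-permutation functor $\Phi_\sigma \colon {\PP}^{(m)} \to {\PP}^{(m)}$ that permutes the $m$ levels of each spatial partition according to $\sigma$. One verifies directly that $\Phi_\sigma$ is a monoidal $*$-functor compatible with tensor products, compositions and involutions of spatial partitions, that it fixes the uniform base partitions ${\partId}^{(m)}$ and ${\partIdB}^{(m)}$, and that it sends $(r, s)$ to the standard pair $({\partPairWB}^{(m)}, {\partPairBW}^{(m)})$. Consequently, $\Phi_\sigma(\CC)$ is a category of spatial partitions containing the standard base partitions, and therefore defines a spatial partition quantum group $G_{\mathrm{std}}$ in the sense of Cébron-Weber.

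It remains to lift $\Phi_\sigma$ to the level of quantum groups. In the concrete realization, $\Phi_\sigma$ is implemented by conjugation with the level-permutation unitary $F_\sigma$ on $(\C^n)^{\otimes m}$: for every spatial partition $p$ realized as a linear map ${\left((\C^n)^{\otimes m}\right)}^{\otimes \ell} \to {\left((\C^n)^{\otimes m}\right)}^{\otimes k}$, one has $T_{\Phi_\sigma(p)} = F_\sigma^{\otimes k} \, T_p \, {(F_\sigma^*)}^{\otimes \ell}$. Since $F_\sigma$ is a unitary on the common fundamental space, this identifies the two concrete $C^*$-tensor categories of intertwiners, and Woronowicz Tannaka-Krein duality then yields an isomorphism $G \cong G_{\mathrm{std}}$.

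The main obstacle will be establishing that $\Phi_\sigma$ is compatible with the entire partition calculus rather than only with the base partitions. In particular, one must confirm that permuting levels commutes with the formation of tensor products, involutions and compositions of arbitrary spatial partitions, and that the implementing unitaries $F_\sigma^{\otimes k}$ assemble coherently into a monoidal natural isomorphism between the two fiber functors. Once this is verified, conjugation by $F_\sigma$ identifies the full intertwiner spaces and hence extends from the generating base partitions to a genuine isomorphism of quantum groups, rather than only a monoidal equivalence.
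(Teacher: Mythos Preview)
Your overall strategy---characterize the duality partitions via \Cref{prop:conj-eq-solutions}, apply a level-permutation functor to reduce to the standard base partitions, and lift this to the quantum group level by conjugation with a permutation unitary---is exactly the paper's approach. However, there is a genuine gap in the specific functor you choose.

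A \emph{uniform} level permutation $\Phi_\sigma$, applying the same $\sigma$ to all points regardless of color, does \emph{not} send $r=\sigma_{\circ\bullet}$ to $\partPairWB^{(m)}$. Indeed, the blocks of $\sigma_{\circ\bullet}$ are $\{(1,i),(2,\sigma(i))\}$; relabeling all levels by some $\tau$ yields blocks $\{(1,\tau(i)),(2,\tau\sigma(i))\}=\{(1,j),(2,\tau\sigma\tau^{-1}(j))\}$, i.e.\ the partition $(\tau\sigma\tau^{-1})_{\circ\bullet}$. This is the standard pair only when $\sigma=\id$. Correspondingly, your formula $T_{\Phi_\sigma(p)}=F_\sigma^{\otimes k}T_p(F_\sigma^*)^{\otimes\ell}$ conjugates every tensor factor by the same $F_\sigma$, which cannot untwist the connection between white and black points.

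What is needed is the paper's \emph{color-dependent} functor $\Perm_{\alpha,\beta}$, which permutes white levels by $\alpha$ and black levels by $\beta$ separately. One computes $\Perm_{\alpha,\beta}(\sigma_{\circ\bullet})=(\beta\sigma\alpha^{-1})_{\circ\bullet}$, so the choice $\alpha=\id$, $\beta=\sigma^{-1}$ does the job. At the operator level this means conjugating $\circ$-factors by $F_\alpha^{(\nn)}$ and $\bullet$-factors by $\overline{F}_\beta^{(\nn)}$; since these differ, the conjugate representation $u^\bullet$ is modified along the way, and one must check (as in the paper's key \Cref{lem:perm-proof}) that the resulting intertwiner spaces still match up to a single change of basis on the fundamental representation. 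Once you replace $\Phi_\sigma$ by $\Perm_{\id,\sigma^{-1}}$ and track this extra twist on $u^\bullet$, your argument goes through.
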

\endgroup

More generally, we show in \Cref{sec:permuting-levels} that permuting the points of any category of spatial 
partitions along the levels leaves the corresponding quantum group invariant, which makes it possible 
to reduce any base partitions to the case of ${\partPairWB}^{(m)}$ and ${\partPairBW}^{(m)}$.
Still, our new base partitions
are useful on a combinatorial level and allow us to show 
that the class of spatial partition quantum groups is closed under taking 
projective versions.

Consider a compact matrix quantum group $G$ with fundamental representation $u$ and assume $\overline{u}$ is unitary.
Then its projective version $PG$ is the compact matrix quantum group defined by the representation $u \otop \overline{u}$.
If $G$ is a classical group, then $PG$ corresponds exactly to the quotient
\[
    PG =  G / (G \cap \{ \lambda I \mid \lambda \in \C \}).
\]
Further, in the case quantum groups, projective versions have for example been studied in~\cite{banica99,banica10a,banica10b,gromada22a}. 
In this context, our main result can be formulated as follows.

\begingroup
\def\thetheorem{2}
\begin{theorem}[\Cref{corr:proj-spatial-closed}]
Let $G$ be a spatial partition quantum group. 
Then $PG$ is again a spatial partition quantum group. Moreover, its category of spatial partitions
is given by $\Flat_{m,{\circ}{\bullet}}^{-1}(\CC)$, where $\CC \subseteq \PP^{(m)}$ is the category of spatial partitions corresponding to $G$ and $\Flat_{m,{\circ}{\bullet}}$ is the functor defined in \Cref{sec:spatial-functor}.
\end{theorem}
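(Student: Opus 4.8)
The plan is to reduce the statement to a computation of intertwiner spaces, carry that computation out combinatorially through the functor $\Flat_{m,\circ\bullet}$ of \Cref{sec:spatial-functor}, and then apply Woronowicz Tannaka--Krein duality. First I would recall that, by definition, $PG$ is the compact matrix quantum group generated by the representation $w := u \otop \overline{u}$, where $u$ is the fundamental representation of $G$. Since $w$ is itself a representation of $G$, the concrete tensor category of $PG$ is the monoidal subcategory of $\Rep(G)$ generated by $w$, so that its morphism spaces are the $G$-intertwiners between tensor powers of $w$, namely
\[
  \Hom_{PG}\bigl(w^{\otimes k}, w^{\otimes \ell}\bigr) = \Hom_{G}\bigl(w^{\otimes k}, w^{\otimes \ell}\bigr)
  \qquad (k,\ell \in \N).
\]
This first step is purely representation-theoretic: it records that, by Tannaka--Krein duality, $PG$ is completely determined by these spaces, and no partitions enter yet.

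Next I would translate the right-hand side into spatial partitions. Writing $u$ as a white and $\overline{u}$ as a black fundamental object, the representation $w = u \otop \overline{u}$ corresponds combinatorially to the coloring block $\circ\bullet$, so that the source and target of $w^{\otimes k} \to w^{\otimes \ell}$ carry the colorings $(\circ\bullet)^{k}$ and $(\circ\bullet)^{\ell}$ (together with the conjugate block $\bullet\circ$ coming from $\overline{w}$). Because $G$ is a spatial partition quantum group with category $\CC \subseteq \PP^{(m)}$, each space $\Hom_G(w^{\otimes k}, w^{\otimes \ell})$ is spanned by the operators $T_p$ for those $p \in \CC$ whose colorings are of this block form. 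The crucial step is then to invoke the defining property of $\Flat_{m,\circ\bullet}$ from \Cref{sec:spatial-functor}: it is a monoidal $\ast$-functor compatible with the realization $q \mapsto T_q$ of partitions as linear maps, so that precisely the partitions $q$ with $\Flat_{m,\circ\bullet}(q) \in \CC$ reproduce the above spanning operators. This yields
\[
  \Hom_{PG}\bigl(w^{\otimes k}, w^{\otimes \ell}\bigr) = \Span\bigl\{\, T_q \mid q \in \Flat_{m,\circ\bullet}^{-1}(\CC) \,\bigr\},
\]
where on the right one restricts to partitions with the appropriate source and target colorings.

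Third, I would check that $\Flat_{m,\circ\bullet}^{-1}(\CC)$ is genuinely a category of spatial partitions. Closedness under tensor product, composition and involution is immediate, since $\Flat_{m,\circ\bullet}$ is a monoidal $\ast$-functor and $\CC$ is closed under the corresponding operations, so preimages respect all three. The delicate point is the base partitions: the interleaving of $u$ and $\overline{u}$ in $w = u \otop \overline{u}$ forces the duality morphisms of $PG$ to be a \emph{twisted} pair $r, s$ rather than the flat pair ${\partPairWB}^{(m)}, {\partPairBW}^{(m)}$, and one must verify that these twisted partitions solve the conjugate equations (so that they are admissible base partitions, cf.\ \Cref{prop:conj-eq-solutions}) and that they lie in $\Flat_{m,\circ\bullet}^{-1}(\CC)$. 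This is exactly the situation covered by \Cref{corr:assume-base-partitions}. Once this is confirmed, $\Flat_{m,\circ\bullet}^{-1}(\CC)$ is a category of spatial partitions whose associated concrete tensor category coincides, space by space, with $\Rep(PG)$, and Woronowicz Tannaka--Krein duality identifies the two quantum groups, proving both that $PG$ is a spatial partition quantum group and that its category is $\Flat_{m,\circ\bullet}^{-1}(\CC)$.

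I expect the main obstacle to lie in the second and third steps jointly: establishing that the combinatorially defined functor $\Flat_{m,\circ\bullet}$ is genuinely compatible with the operator realization $p \mapsto T_p$, so that the abstract intertwiner spaces of $PG$ and the combinatorial spans agree with no spurious linear relations or dimension mismatches, and simultaneously that the preimage contains an admissible pair of base partitions. Both issues hinge on the twisting introduced by $u \otop \overline{u}$, which is precisely what the generalized base partitions and the analysis of the conjugate equations developed earlier in the paper are designed to control.
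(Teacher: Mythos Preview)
Your outline is correct and matches the paper's approach essentially step for step: identify $\Hom_{PG}(w^x,w^y)$ with $\Hom_G(u^{\Flat_{m,\circ\bullet}(x)},u^{\Flat_{m,\circ\bullet}(y)})$ via explicit unitaries, use the spatial-partition description of the latter, pull back through $\Flat_{m,\circ\bullet}$, and verify rigidity of the preimage. The two technical points you flag as obstacles are exactly what the paper isolates as \Cref{lem:u-flat-equiv} and \Cref{lem:flat-inter} (the compatibility $Q_y^{-1}T_{\Flat_{m,z}(p)}^{(\nn)}Q_x = (S^{-1})^{\otimes y}T_p^{(\nn\cdots\nn)}S^{\otimes x}$, which is only up to a change of basis, not literal equality) and \Cref{prop:flat-category} (rigidity of $\Flat_{m,z}^{-1}(\CC)$, established via \Cref{prop:existance-duals} rather than \Cref{corr:assume-base-partitions}).
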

\endgroup

Note that the previous result does not only apply to projective versions $PG$ but to any 
compact matrix quantum group defined by a $\otop$-product of the fundamental representation $u$ and a unitary conjugate representation $u^\bullet$. 
See \Cref{sec:spatial-tensor-powers} for further details.

As an application, we consider the categories $\PP^{(m)}_2$ of all spatial pair partitions
on $m$ levels. In~\cite{banica09, faross22}, it is shown that $\PP^{(1)}_2$ corresponds to the classical orthogonal group $O_n$
and that $\PP^{(2)}_2$ corresponds to its projective version $PO_n$.
Using the previous theorem, we are now able to generalize these results in \Cref{sec:P2} to all $m \in \N$ and obtain
\[
  \PP_2^{(m)} \quad \longleftrightarrow \quad 
  \begin{cases}
    PO_n & \text{if $m$ is even}, \\
    O_n & \text{if $m$ is odd}.
  \end{cases}
\]

Finally, we consider projective versions of easy quantum groups in \Cref{sec:proj-easy-qg}. Since easy quantum groups are a subclass 
of spatial partition quantum groups, our main result implies that their projective versions 
are again spatial partition quantum groups. Using a result of Gromada~\cite{gromada22a}, we 
then derive sets of spatial partitions generating the categories of the projective versions of 
orthogonal easy quantum groups with degree of reflection two. This allows us to
describe these quantum groups explicitly as universal $C^*$-algebras defined by finite sets of relations.

\subsection*{Overview}

We begin in \Cref{sec:prelim} with some preliminaries
about compact matrix quantum groups, their representation categories and Woronowicz Tannaka-Krein duality. 
Then, we consider the combinatorics of spatial partitions in \Cref{sec:comb}
and introduce our new base partitions. In particular, we characterize all possible base partitions and introduce two functors between categories
of spatial partitions that are used to prove our main results.

In \Cref{sec:spatial-partition-QGs}, we show how to construct compact matrix quantum 
groups from categories of spatial partitions containing our new base partitions and give a description of 
these quantum groups in terms of generators and relations. This allows us
to construct the free orthogonal quantum groups $O^+(F_\sigma)$, before we show that our new base partitions yield the 
same class of spatial partition quantum groups as defined by Cébron-Weber.

Finally, we prove in \Cref{sec:proj-version} that the class of spatial partition quantum groups is closed under 
taking projective versions. Further, we determine the quantum groups corresponding to the categories $P_2^{(m)}$
of spatial pair partitions and give explicit descriptions of the 
projective versions of some orthogonal easy quantum groups in terms of spatial partitions.

\section{Preliminaries}\label{sec:prelim}

\subsection{Notation}\label{sec:notation}

We begin by introducing some notations and conventions
that will be used throughout the rest of the paper.

Let $\circ$ and $\bullet$ be two colors. Then we denote with
\[
  \colors := \{ 1, \circ, \bullet, {\circ}{\circ}, {\circ}{\bullet}, {\bullet}{\circ}, {\bullet}{\bullet}, {\circ}{\circ}{\circ}, \ldots \}
\]
the free monoid generated by $\{\circ, \bullet\}$. It is given
by the set of all finite words over $\{\circ, \bullet\}$ with concatenation as multiplication and the 
empty word  $1$ as the identity element. 
If $x \in \colors$, then we denote with $\abs{x}$ the length of the word $x$ and with 
$x_i \in \{\circ, \bullet\}$ the $i$-th color.
Further, let $\overline{\,\cdot\,} \colon \colors \to \colors$ be the anti-homomorphism 
defined by 
\[
  \overline{\circ} = \bullet, \qquad
  \overline{\bullet} = \circ, \qquad
  \overline{x \cdot y} = \overline{y} \cdot \overline{x}
  \quad 
  \forall x, y \in \colors.
\]

Next, consider a finite-dimensional Hilbert space $V$.
Then we denote with $\overline{V}$ its conjugate Hilbert space
where the conjugate of a vector $v \in V$ is denoted by $\overline{v} \in \overline{V}$.
Using colors, we additionally define $V^\circ := V$, $V^\bullet := \overline{V}$
and $v^\circ := v$, $v^\bullet := \overline{v}$ for all $v \in V$. Further, we extend this notation to tensor products by defining 
$V^{\otimes x} := V^{x_1} \otimes \dots \otimes V^{x_k}$ for all $x \in \colors$ with $k := \abs{x}$.

Let $n \in \N$. Then we define $[n] := \{1, \ldots, n\}$
and more generally $[\nn] := [n_1] \times \cdots \times [n_m]$
for all $\nn := (n_1, \dots, n_m) \in \N^m$.
Additionally, we introduce the Hilbert spaces $\C^\nn := \C^{n_1} \otimes \dots \otimes \C^{n_m}$
with canonical bases given by $e_\ii := e_{i_1} \otimes \dots \otimes e_{i_m}$
for all $\ii := (i_1, \dots, i_m) \in [\nn]$. 

Next, consider two finite-dimensional Hilbert spaces $V$ and $W$. Then $B(V, W)$ denotes 
the vector space of all linear operators $T \colon V \to W$ and we define $B(V) := B(V, V)$.
Given an orthonormal basis of $V$ indexed by $J$ and an orthonormal basis of $W$ indexed by $I$, 
we identify elements of $B(V, W)$ with matrices $(T^i_j)_{i \in I, j \in J}$.
Moreover, if $T \colon V \to W$ is a linear operator, then its conjugate
$\overline{T} \colon \overline{V} \to \overline{W}$ is given by 
$\overline{T}(\overline{v}) = \overline{T(v)}$ for all $v \in V$.
Using colors, we additionally write $T^\circ := T, T^\bullet := \overline{T}$
and $T^{\otimes x} := T^{x_1} \otimes \dots \otimes T^{x_{k}}$ for all $x \in \colors$ with $k := \abs{x}$.

Throughout the rest of the paper, we will use basic facts about $C^*$-algebras, their tensor products and universal $C^*$-algebras. For more information on 
these topics, we refer to~\cite{blackadar06}. In particular, we will denote with $\otimes$ the minimal tensor product of $C^*$-algebras.

Let $A$ be a unital $C^*$-algebra. Then we identify elements $u \in B(V, W) \otimes A$
with $A$-valued matrices $(u^i_j)_{i\in I,j\in J}$ with respect to orthonormal bases of $V$ and $W$ index by $J$ and $I$.
Moreover, we embed $B(V, W)$ into $B(V, W) \otimes A$ via $T \mapsto T \otimes 1$
and define the anti-linear map $\overline{\, \cdot \,} \colon B(V, W) \otimes A \to B(\overline{V}, \overline{W}) \otimes A$ by 
\[
  \overline{(T \otimes a)} := \overline{T} \otimes a^*
  \quad 
  \forall T \in B(V, W), \, a \in A.
\]
With respect to the conjugate bases of $V$ and $W$, the matrix coefficients of $\overline{u}$ are then given by $\overline{u}^i_j = {(u^i_j)}^*$.

Finally, we introduce the Woronowicz tensor products $\obot$ and $\otop$ from~\cite{woronowicz87}.
These are the bilinear operators defined by
\begin{alignat*}{3}
  (T \otimes a) \obot (S \otimes b) &:= TS \otimes a \otimes b   &\quad \in B(V) \otimes A \otimes A, \\
  (T \otimes a) \otop (S \otimes b) &:= (T \otimes S) \otimes ab &\quad \in B(V \otimes W) \otimes A
\end{alignat*}
for all $T \otimes a \in B(V) \otimes A$ and $S \otimes b \in B(V) \otimes A$ or $S \otimes b \in B(W) \otimes A$ respectively.
The matrix coefficients of both operators are given by
\[
  {(u \obot v)}^i_j = \sum_k u^i_k \otimes v^k_j,
  \qquad
  {(u \otop v)}^{i_1 i_2}_{j_1 j_2} = u^{i_1}_{j_1} \cdot v^{i_2}_{j_2}
\]
with respect to orthonormal bases of $V$, $W$ and the induced basis of $V \otimes W$. 

\subsection{Compact matrix quantum groups}

Compact quantum groups were first introduced by Woronowicz~\cite{woronowicz87,woronowicz91}
as a generalization of classical compact groups and provide the general framework for 
easy quantum groups and spatial partition quantum groups. 
In the following, we will restrict ourselves to compact matrix quantum groups, which 
are a class of compact quantum groups defined by a single unitary representation.
We refer to~\cite{weber17a, banica23} for a detailed introduction to compact matrix quantum groups 
and to~\cite{timmermann08, neshveyev13} for the general case of compact quantum groups.

\begin{definition}
Let $V$ be a finite-dimensional Hilbert space, $A$ be a unital $C^*$-algebra and $u \in B(V) \otimes A$.
Then $G := (A, u)$ is called a \textit{compact matrix quantum group} if 
\begin{enumerate}
\item $A$ is generated as $C^*$-algebra by the matrix coefficients
\[
  \left\{ (\varphi \otimes \id_A)(u) \mid \varphi \in B(V, \C) \right\},
\]
\item $u$ is unitary and $\overline{u}$ is invertible,
\item there exists a $*$-homomorphism $\Delta \colon A \to A \otimes A$, such that 
\[
  (\id_{B(V)} \otimes \Delta)(u) = u \obot u.
\]
\end{enumerate}
The element $u$ is called the \textit{fundamental representation} of $G$. 
Further, we denote the $C^*$-algebra $A$ with $C(G)$
and the dense $*$-algebra generated by 
the matrix coefficients with $\OO(G)$.
\end{definition}

Note that we have chosen a basis-independent definition of compact matrix quantum groups.
This allows us to keep track of the tensor product structure of the underlying Hilbert space $V$
and does not force us to identify $\C^n \otimes \C^n$ with $\C^{n^2}$ in an arbitrary way.

Examples of compact matrix quantum groups are Wang's free orthogonal and free unitary quantum groups~\cite{wang95}
or its deformations in the sense of Van Daele-Wang~\cite{daele96}.
Using the notation of Banica~\cite{banica97}, these quantum groups are defined as follows.

\begin{definition}\label{def:universal-qgs}
Let $\nn \in \N^m$, $F \in B(\C^{\nn})$ be invertible and denote with $\iota \colon \overline{\C^{\nn}} \to \C^{\nn}$ the linear isomorphism 
defined by $\iota(\overline{e_\ii}) = e_\ii$ for all $\ii \in [\nn]$. Define the universal unital $C^*$-algebras 
\begin{align*}
  A_o(F) &:= C^*( u_{\ii\jj} \mid \text{$u$ is unitary and $u = (F \iota) \, \overline{u} \, {(F \iota)}^{-1}$}), \\
  A_u(F) &:= C^*( u_{\ii\jj} \mid \text{$u$ and $(F \iota) \, \overline{u} \, (F \iota)^{-1}$ are unitary})
\end{align*} 
generated by the coefficients of a matrix $u := {(u^\ii_\jj)}_{\ii,\jj \in [\nn]}$. Then 
\[
  O^+(F) := (A_o(F), u), \qquad U^+(F) := (A_u(F), u)
\]
are the \textit{free orthogonal} and the \textit{free unitary} quantum groups
with parameter $F$. Moreover, we define $O^+_{\nn} := O^+(\id_{\C^{\nn}})$ and $U^+_{\nn} := U^+(\id_{\C^{\nn}})$.

\end{definition}

Additional examples of compact matrix quantum groups are the quantum permutation group $S_n^+$~\cite{wang98},
the hyperoctahedral quantum groups $H_n^+$~\cite{banica07} or more generally easy quantum groups~\cite{banica09} and spatial partition quantum groups~\cite{cebron16}. 
We introduce spatial partition quantum groups in \Cref{sec:spatial-partition-QGs}, which includes easy quantum groups 
as a special case. For a more detailed introduction to easy quantum groups, we refer to~\cite{weber16,weber17a}.

As in the case of classical groups, one can generalize the notion of 
subgroup and isomorphic groups to the setting of compact matrix quantum groups.

\begin{definition}
Let $G$ and $H$ be compact matrix quantum groups
with fundamental representations $u$ on $V$ and $v$ on $W$. Then
\begin{enumerate}
\item $H$ is a \textit{subgroup} of $G$ and we write $H \subseteq G$ if there exists a unitary $Q\colon W \to V$ and 
a unital $*$-homomorphism $\varphi \colon C(G) \to C(H)$ with 
\[
  \varphi(u) = Q v Q^{-1},
\]
\item $G$ and $H$ are \textit{isomorphic} and we write $G = H$ if $H \subseteq G$ and the previous $*$-homomorphism
$\varphi$ is a $*$-isomorphism,
\item $G$ and $H$ are \textit{isomorphic as compact quantum groups} if there exists a 
unital $*$-isomorphism $\varphi \colon C(G) \to C(H)$ with
\[
  (\varphi \otimes \varphi) \circ \Delta_G = \Delta_H \circ \varphi.
\]
\end{enumerate}
\end{definition}

Note that an isomorphism between compact quantum groups is more general
and does not respect the fundamental representation of a compact matrix quantum group.
In particular, it allows us to compare quantum groups with fundamental representations of different dimensions.

Finally, we introduce representations of compact
matrix quantum groups, which will be discussed in more detail in the following.

\begin{definition}
Let $G$ be a compact matrix quantum group. A \textit{representation} of $G$ on a finite-dimensional Hilbert space $V$ 
is an invertible element $v \in B(V) \otimes C(G)$ that satisfies
\[
  (\id_{B(V)} \otimes \Delta)(v) = v \obot v.
\]
\end{definition}

Consider a compact matrix quantum group  $G$ on a Hilbert space $V$. Then
its fundamental representation $u \in B(V) \otimes C(G)$ and the trivial representation $1 \in B(\C) \otimes C(G)$
are always representations.
Further, if $v$ and $w$ are representations, then both $\overline{v}$ and $v \otop w$ are 
again representations.

\subsection{Representation categories and rigidity}\label{sec:repr-categories}

Next, we recall that representations of a compact matrix quantum group have the 
structure of a tensor category, which allows us to formulate Woronowicz Tannaka-Krein duality in the next section.
We refer to~\cite{woronowicz88,timmermann08, neshveyev13, gromada20a} for more on the representation theory 
of compact matrix quantum groups and to~\cite{etingof16} for tensor categories in general.

We begin by introducing intertwiners between representations.

\begin{definition}
Let $G$ be a compact matrix quantum group with representations $v$ on $V$ and $w$ on $W$.
Then the space of \textit{intertwiners} between $v$ and $w$
is given by
\[
  \Hom(v, w) := \left\{ T \in B(V, W) \mid T v = w T \right\}.
\]
\end{definition}

Consider a compact matrix quantum group $G$. 
Then we can construct a category $\Rep(G)$ with objects given by finite-dimensional unitary representations
of $G$ and morphisms $\Hom(v, w)$ given by intertwiners between $v$ and $w$.
Additionally, we can use the $\otop$-operator and the adjoint $T^*$ of intertwiners to 
define a monoidal and a $*$-structure on this category. 
The following proposition summarizes its most important properties.

\begin{proposition}\label{prop:reprs-monoidal-cat}
The finite-dimensional unitary representations of a compact matrix quantum group 
are a concrete monoidal $*$-category in following the sense:
\begin{enumerate}
  \item $\id_V \in \Hom(v, v)$ for every representation $v$ on $V$.
  \item $\Hom(v, w)$ is a linear subspace of $B(V, W)$ for all representations $v$ on $V$ and $w$ on $W$.
  \item If $S \in \Hom(v, w)$ and $T \in \Hom(w, x)$, then $ST \in \Hom(v, x)$.
  \item If $T \in \Hom(v, w)$, then $T^* \in \Hom(w, v)$.
  \item If $S \in \Hom(v, w)$ and $T \in \Hom(x, y)$, then $S \otimes T \in \Hom(v \otop x, w \otop y)$.
\end{enumerate}
\end{proposition}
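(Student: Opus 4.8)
The plan is to check the five asserted properties one by one straight from the definition $\Hom(v,w) = \{T \in B(V,W) \mid Tv = wT\}$, reading each intertwiner relation inside $B(V,W) \otimes C(G)$ with $T$ embedded as $T \otimes 1$. Properties (1)--(3) and (5) are purely algebraic and rely only on the bilinearity and associativity of the multiplication on the spaces $B(\cdot,\cdot) \otimes C(G)$; the compatibility with adjoints (4) is the sole point where the unitarity of the representations enters, so I expect it to be the crux.

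Properties (1)--(3) are immediate. For (1), $\id_V$ is embedded as $\id_V \otimes 1$, the unit of $B(V) \otimes C(G)$, so $\id_V \, v = v = v \, \id_V$. For (2), bilinearity of the product gives $(\lambda S + \mu T)v = \lambda\, Sv + \mu\, Tv = \lambda\, wS + \mu\, wT = w(\lambda S + \mu T)$ for $S,T \in \Hom(v,w)$ and $\lambda,\mu \in \C$, so $\Hom(v,w)$ is a linear subspace of $B(V,W)$. For (3), given $Sv = wS$ and $Tw = xT$, associativity yields $(TS)v = T(Sv) = T(wS) = (Tw)S = (xT)S = x(TS)$, so the composite $T \circ S$ (written $ST$ in the diagrammatic order of the statement) lies in $\Hom(v,x)$.

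The key step is (4). Taking adjoints of $Tv = wT$ in $B(V,W) \otimes C(G)$ gives $v^* T^* = T^* w^*$. Multiplying this on the left by $v$ and on the right by $w$ and invoking the unitarity relations $v v^* = \id_V \otimes 1$ and $w^* w = \id_W \otimes 1$ collapses it to $T^* w = v T^*$, that is, $T^* \in \Hom(w,v)$. This is the only place where unitarity of both $v$ and $w$ is used, and it is exactly what makes the $*$-operation $T \mapsto T^*$ well defined on the category.

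Finally, for the monoidal structure (5) I would pass to matrix coefficients. Using ${(u \otop v)}^{i_1 i_2}_{j_1 j_2} = u^{i_1}_{j_1} v^{i_2}_{j_2}$ and that the entries of $S$ and $T$ are scalars, the $(k_1 k_2, j_1 j_2)$-entry of $(S \otimes T)(v \otop x)$ factors as $(Sv)^{k_1}_{j_1}(Tx)^{k_2}_{j_2}$, while the corresponding entry of $(w \otop y)(S \otimes T)$ factors as $(wS)^{k_1}_{j_1}(yT)^{k_2}_{j_2}$. The relations $Sv = wS$ and $Tx = yT$ equate the two factors separately, so the products coincide and $S \otimes T \in \Hom(v \otop x, w \otop y)$; note that no commutativity in $C(G)$ is needed. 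Having verified (1)--(5), the finite-dimensional unitary representations of $G$ form a concrete monoidal $*$-category in precisely the stated sense.
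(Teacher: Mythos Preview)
Your proof is correct. The paper does not actually supply a proof of this proposition; it is stated as a standard fact with implicit reference to the literature on representation categories of compact quantum groups cited at the beginning of the section (\cite{woronowicz88,timmermann08,neshveyev13,gromada20a}). Your verification of (1)--(5) directly from the intertwiner condition $Tv = wT$ is exactly the routine argument one finds in those references; in particular, your treatment of (4) via taking adjoints and cancelling with the unitarity relations $vv^* = 1$ and $w^*w = 1$ is the standard one, and your coefficient computation for (5) correctly observes that the scalar entries of $S$ and $T$ commute past the $C(G)$-valued entries of the representations, so no commutativity in $C(G)$ is needed. You also handled the awkward composition convention in item~(3) appropriately.
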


In addition to the previous properties, the representation category of a compact matrix quantum group is rigid
in the sense that every unitary representation has a conjugate representation.

\begin{definition}\label{def:conjugate-repr}
Let $G$ be a compact matrix quantum group with unitary representations $u^\circ$ on $V$ and $u^\bullet$
on $W$.
Then $u^\bullet$ is \textit{conjugate} to $u^\circ$ if there exist
intertwiners
$R \in \Hom(1, u^\circ \otop u^\bullet)$ and $S \in \Hom(1, u^\bullet \otop u^\circ)$ satisfying the conjugate equations
\[
  (R^* \otimes \id_{V}) \cdot (\id_{V} \otimes S) = \id_{V},
  \qquad
  (S^* \otimes \id_{W}) \cdot (\id_{W} \otimes R) = \id_{W}. 
\]
\end{definition}

Note that the representation $u^\bullet$ is also called a dual of $u^\circ$ and 
the intertwiners $R$ and $S$ are called duality morphisms. We refer to~\cite{etingof16} and~\cite{neshveyev13} for the definition of rigidity
in a more general context.

In~\cite{woronowicz88}, it is shown that the representation category
of a compact matrix quantum groups is generated by tensor powers of its fundamental representation $u$ and its 
conjugate representation. 
Thus, it will be enough to consider only the fundamental representation $u^\circ:= u$
with conjugate $u^\bullet$ in the following. 
Further, we introduce the notation
$
  u^x := u^{x_1} \otop \dots \otop u^{x_k}
$
for all $x \in \colors$ with $k := \abs{x} > 0$. In the case $x = 1$, we 
define $u^1 := 1$ as the trivial representation.

Note that the conjugate of a representation is not unique 
but is only determined up to unitary equivalence.
Furthermore, the representation $\overline{u^\circ}$ is in general not conjugate to $u^\circ$ 
since it is not necessarily unitary. However, we show in the following that any conjugate representation 
$u^\bullet$ is equivalent to $\overline{u^\circ}$ and this equivalence is 
defined by the solutions $R$ and $S$ of the conjugate equations.

\begin{proposition}\label{prop:u-bullet-impl-conj}
Consider a compact matrix quantum group with fundamental representation $u^\circ$ on $V$ and let $F \in B(\overline{V})$ be invertible.
If $u^\bullet := F \overline{u^\circ} F^{-1}$ is unitary, 
then $u^\circ$ and $u^\bullet$ are conjugate with duality morphisms $R$ and $S$ given by
\[
  R^{ij} = F^j_i, 
  \qquad 
  S^{ij} = {(\overline{F}^{-1})}^j_i
\]
with respect to any basis of $V$ and its conjugate basis.
\end{proposition}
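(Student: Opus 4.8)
The plan is to verify the two conjugate equations directly by computing matrix coefficients in the given basis, using the explicit formulas $R^{ij} = F^j_i$ and $S^{ij} = (\overline{F}^{-1})^j_i$. First I would confirm that $R$ and $S$ are genuine intertwiners, i.e.\ that $R \in \Hom(1, u^\circ \otop u^\bullet)$ and $S \in \Hom(1, u^\bullet \otop u^\circ)$. Since $u^\bullet = F\,\overline{u^\circ}\,F^{-1}$, the coefficients of $u^\bullet$ are expressible through those of $\overline{u^\circ} = ((u^\circ)^i_j)^*$; the intertwiner condition $R \cdot 1 = (u^\circ \otop u^\bullet) \cdot R$ should reduce, after writing out $(u^\circ \otop u^\bullet)^{ik}_{jl} = (u^\circ)^i_j (u^\bullet)^k_l$ and substituting $R^{jl} = F^l_j$, to an identity that follows from the unitarity of $u^\circ$ together with the relation $F^{-1} u^\bullet F = \overline{u^\circ}$. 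The analogous computation, now using unitarity of $u^\bullet$, should give $S \in \Hom(1, u^\bullet \otop u^\circ)$.

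Next I would check the two snake identities. Writing $(R^* \otimes \id_V) \cdot (\id_V \otimes S)$ in coordinates, one contracts the middle indices: the $(i,j)$-coefficient becomes $\sum_k \overline{R^{ki}}\, S^{kj}$ up to index placement, which with the explicit formulas is $\sum_k \overline{F^i_k}\,(\overline{F}^{-1})^j_k$. The point is that this sum equals $\delta^i_j$, i.e.\ it collapses to $\id_V$, precisely because $\overline{F}$ and $\overline{F}^{-1}$ are mutually inverse; concretely, $\sum_k \overline{F^i_k}\,(\overline{F}^{-1})^j_k = \sum_k \overline{F}^i_k\,(\overline{F}^{-1})^j_k$ is a matrix product of $\overline{F}$ with the transpose-conjugate arrangement of $\overline{F}^{-1}$, and tracking the index conventions carefully yields $\delta^i_j$. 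The second snake identity $(S^* \otimes \id_W) \cdot (\id_W \otimes R) = \id_W$ is the mirror computation and should follow by the same bookkeeping with the roles of $F$ and $\overline{F}^{-1}$ interchanged.

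I expect the genuine obstacle to be not any deep idea but the \emph{index and conjugation bookkeeping}: one must be scrupulous about how $\overline{(\,\cdot\,)}$ acts on matrix coefficients (recall $\overline{u}^i_j = (u^i_j)^*$ from the notation section), about upper-versus-lower index placement in $R^{ij}$ and $S^{ij}$ viewed as vectors in $V \otimes \overline{V}$ and $\overline{V} \otimes V$, and about the identification $\iota\colon \overline{V} \to V$ implicit in reading $F \in B(\overline{V})$ against a basis of $V$. A clean way to organize this is to fix an orthonormal basis $(e_i)$ of $V$, write $R = \sum_{i,j} F^j_i\, e_i \otimes \overline{e_j}$ and $S = \sum_{i,j} (\overline{F}^{-1})^j_i\, \overline{e_i} \otimes e_j$ as explicit tensors, and then evaluate both the intertwiner relations and the snake identities as bilinear pairings, so that every contraction is unambiguous. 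Once the conventions are pinned down, each of the four required identities is a one-line matrix computation reflecting $F F^{-1} = \id$ (in conjugated form), and the unitarity hypotheses on $u^\circ$ and $u^\bullet$ enter only in establishing the intertwiner property, not the snake relations.
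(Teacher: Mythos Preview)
Your proposal is correct and follows essentially the same route as the paper: verify that $R$ and $S$ are intertwiners using the unitarity of $u^\circ$ and $u^\bullet$ respectively (the paper states this as the equivalences $u^\circ(u^\circ)^*=1 \Leftrightarrow R\in\Hom(1,u^\circ\otop u^\bullet)$ and $u^\bullet(u^\bullet)^*=1 \Leftrightarrow S\in\Hom(1,u^\bullet\otop u^\circ)$), and then check the two snake identities, which the paper records compactly as $(R^*\otimes\id_V)(\id_V\otimes S)=\overline{F}^{-1}\overline{F}$ and $(S^*\otimes\id_{\overline{V}})(\id_{\overline{V}}\otimes R)=FF^{-1}$. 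Your diagnosis that the only real work is index and conjugation bookkeeping is exactly right.
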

\begin{proof}
A proof can be found in~\cite{gromada20a}. One checks directly that 
\begin{align*}
  u^\circ {(u^\circ)}^* &= 1 \ \Longleftrightarrow \ R \in \Hom(1, u^\circ \otop u^\bullet), \\
  u^\bullet {(u^\bullet)}^* &= 1 \ \Longleftrightarrow \ S \in \Hom(1, u^\bullet \otop u^\circ).
\end{align*}
Thus, $R$ and $S$ are intertwiners. Further, they satisfy the conjugate equations since
\[
  (R^* \otimes \id_{V}) \cdot (\id_{V} \otimes S) = \overline{F}^{-1} \overline{F},
  \qquad 
  (S^* \otimes \id_{\overline{V}}) \cdot (\id_{\overline{V}} \otimes R) = F F^{-1}.
\]
\end{proof}

\begin{proposition}\label{prop:u-black-formula}
Let $G$ be a compact matrix quantum group with unitary representation $u^\circ$ on $V$.
Assume $u^\bullet$ is a unitary representation on $\overline{V}$ that 
is conjugate to $u^\circ$ with duality morphisms $R$ and $S$.
Then $u^\bullet = F \overline{u^\circ} F^{-1}$, where $F \in B(\overline{V})$ is given by $F^i_j = R^{ji}$
with respect to any basis of $V$ and its conjugate basis.
\end{proposition}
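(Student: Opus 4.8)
The plan is to prove the identity $u^\bullet = F \overline{u^\circ} F^{-1}$ by a direct coordinate computation, extracting the formula from the single duality morphism $R \in \Hom(1, u^\circ \otop u^\bullet)$ together with the unitarity of $u^\circ$, and invoking the conjugate equations only to guarantee that $F$ is invertible. This is essentially the converse bookkeeping to \Cref{prop:u-bullet-impl-conj}.

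First I would fix a basis $(e_i)$ of $V$ with conjugate basis $(\overline{e_i})$ of $\overline{V}$ and write out the intertwiner condition in coordinates. Since the matrix coefficients of $u^\circ \otop u^\bullet$ are ${(u^\circ \otop u^\bullet)}^{ij}_{kl} = {(u^\circ)}^i_k\, {(u^\bullet)}^j_l$ and $R$ corresponds to the vector $\sum_{k,l} R^{kl}\, e_k \otimes \overline{e_l} \in V \otimes \overline{V}$, the condition $R \in \Hom(1, u^\circ \otop u^\bullet)$ reads
\[
  \sum_{k,l} {(u^\circ)}^i_k\, {(u^\bullet)}^j_l\, R^{kl} = R^{ij}
\]
as an identity in $C(G)$ for all $i,j$. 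Substituting $R^{kl} = F^l_k$ turns this into $\sum_{k,l} {(u^\circ)}^i_k\, {(u^\bullet)}^j_l\, F^l_k = F^j_i$.

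The key step is then to isolate $u^\bullet$. I would multiply this identity by ${({(u^\circ)}^i_m)}^* = \overline{u^\circ}^i_m$ and sum over $i$. On the left-hand side the only $i$-dependence is the factor $\sum_i {({(u^\circ)}^i_m)}^*\, {(u^\circ)}^i_k$, which collapses to $\delta_{mk}$ by the unitarity relation ${(u^\circ)}^* u^\circ = 1$, leaving $\sum_l {(u^\bullet)}^j_l\, F^l_m$; the right-hand side becomes $\sum_i F^j_i\, \overline{u^\circ}^i_m$. Hence
\[
  \sum_l {(u^\bullet)}^j_l\, F^l_m = \sum_i F^j_i\, \overline{u^\circ}^i_m,
\]
which is exactly the matrix identity $u^\bullet F = F \overline{u^\circ}$. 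Finally, to rearrange this into the claimed formula I would check that $F$ is invertible: expanding the first conjugate equation $(R^* \otimes \id_{V}) \cdot (\id_{V} \otimes S) = \id_{V}$ in coordinates gives $\sum_i \overline{R^{ai}}\, S^{ij} = \delta_{aj}$, that is $\sum_i \overline{F^i_a}\, S^{ij} = \delta_{aj}$, which exhibits a left inverse of $\overline{F}$ read off from $S$. In finite dimension this makes $\overline{F}$, and hence $F$, invertible, and combined with $u^\bullet F = F \overline{u^\circ}$ yields $u^\bullet = F \overline{u^\circ} F^{-1}$.

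I expect the only genuine obstacle to be bookkeeping rather than anything conceptual: keeping the conventions for matrix coefficients, the conjugation $\overline{u^\circ}^i_j = {({(u^\circ)}^i_j)}^*$, and the transpose hidden in $F^i_j = R^{ji}$ mutually consistent, and applying the unitarity relation ${(u^\circ)}^* u^\circ = 1$ on the correct index. Once the indices are tracked carefully, the proof is a short computation; note in particular that unitarity of $u^\bullet$ is not needed for the formula itself, only the intertwiner $R$, the unitarity of $u^\circ$, and the conjugate equation for invertibility of $F$.
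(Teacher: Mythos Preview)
Your proof is correct, but it takes a genuinely different route from the paper's. The paper argues abstractly: it first invokes results from \cite{neshveyev13} to the effect that $\overline{u^\circ}$ is equivalent to some unitary conjugate of $u^\circ$, and that conjugates are unique up to equivalence, so there exists \emph{some} invertible $\widetilde{F}$ with $u^\bullet = \widetilde{F}\,\overline{u^\circ}\,\widetilde{F}^{-1}$. It then applies \Cref{prop:u-bullet-impl-conj} to $\widetilde{F}$ to produce duality morphisms $\widetilde{R},\widetilde{S}$, computes the intertwiner $(\widetilde{S}^*\otimes\id)(\id\otimes R)=F\widetilde{F}^{-1}\in\Hom(u^\bullet,u^\bullet)$, and conjugates by it to replace $\widetilde{F}$ by $F$. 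Your argument instead unpacks the single intertwining relation $(u^\circ\otop u^\bullet)R=R$ in coordinates and cancels $u^\circ$ using $(u^\circ)^*u^\circ=1$, arriving directly at $u^\bullet F = F\,\overline{u^\circ}$; the conjugate equation is used only afterwards, to exhibit a one-sided inverse of $\overline{F}$ and hence guarantee that $F$ is invertible. Your approach is more elementary and self-contained (no appeal to the external results in \cite{neshveyev13}), and it makes explicit, as you note, that unitarity of $u^\bullet$ plays no role. The paper's approach, by contrast, emphasizes the categorical picture and the symmetry with \Cref{prop:u-bullet-impl-conj}.
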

\begin{proof}
The representation $\overline{u^\circ}$ is equivalent to 
a unitary representation that is conjugate to $u^\circ$, see \cite[Example 2.2.3]{neshveyev13}. 
Further, \cite[Proposition 2.2.4]{neshveyev13} 
shows that conjugates are unique up to equivalence. Hence, there exists an invertible $\widetilde{F} \in B(\overline{V})$
such that $u^\bullet = \widetilde{F} \overline{u^\circ} \widetilde{F}^{-1}$.
The previous proposition then implies that 
\[
  \widetilde{R} \in \Hom(1, u^\circ \otop u^\bullet), \qquad \widetilde{S} \in \Hom(1, u^\bullet \otop u^\circ),
\]
where $\widetilde{R}$ and $\widetilde{S}$ are the solutions to the conjugate equations defined by $\widetilde{F}$.
As in~\cite[Theorem 3.4.6]{gromada18}, %
we compute 
\[
  (\widetilde{S}^* \otimes \id_{\overline{V}}) \cdot (\id_{\overline{V}} \otimes R)
  = F \widetilde{F}^{-1} \in \Hom(u^\bullet, u^\bullet),
\]
which yields
\[
  u^\bullet = F \widetilde{F}^{-1} u^\bullet \widetilde{F} F^{-1} = F \overline{u^\circ} F^{-1}.
\]
\end{proof}

\subsection{Woronowicz Tannaka-Krein duality}\label{sec:tannaka-krein}

In the case of classical groups, Tannaka-Krein duality~\cite{doplicher98} allows the reconstruction of a compact group 
from an abstract category of representations. A similar result for compact matrix quantum groups 
was first proven by Woronowicz in~\cite{woronowicz88}.

There exist multiple proofs of Woronowicz Tannaka-Krein duality at various levels of abstraction, see for example~\cite{woronowicz88,neshveyev13,malacarne16,gromada20a}.
In the following, we choose the approach of Gromada~\cite{gromada20a},
which is most suitable for the setting of categories defined by partitions.
Thus, we begin by introducing the notion of an abstract two-colored representation category that captures the properties of the 
representation category of a compact matrix quantum group discussed in the previous section.

\begin{definition}\label{def:rep-category}
Let $V$ be a finite-dimensional Hilbert space. 
A \textit{two-colored representation category} $\CC$ is a collection of linear subspaces
$\CC(x, y) \subseteq B(V^{\otimes x}, V^{\otimes y})$ for all $x, y \in \colors$ satisfying the following
properties:
\begin{enumerate}
\item $\id_{V^{\otimes x}} \in \CC(x, x)$ for all $x \in \colors$.
\item If $S \in \CC(x, y)$ and $T \in \CC(y, z)$, then $ST \in \CC(x, z)$.
\item If $T \in \CC(x, y)$, then $T^* \in \CC(y, x)$. 
\item If $S \in \CC(w, x)$ and $T \in \CC(y, z)$, then $S \otimes T \in \CC(wy, xz)$.
\item There exist $R \in \CC(1, \circ\bullet)$ and $S \in \CC(1, \bullet\circ)$ satisfying the conjugate equations
\[
  (R^* \otimes \id_{V}) \cdot (\id_{V} \otimes S) = \id_{V},
  \qquad
  (S^* \otimes \id_{\overline{V}}) \cdot (\id_{\overline{V}} \otimes R) = \id_{\overline{V}}. 
\]
\end{enumerate}
\end{definition}

Woronowicz Tannaka-Krein duality now states that given any two-colored representation category $\CC$,
there exists a unique compact matrix quantum group whose representation category $\Rep(G)$ is given 
by $\CC$.

\begin{theorem}[Woronowicz Tannaka-Krein duality]\label{thm:tannaka-krein-duality}
Let $\CC$ be a two-colored representation category on a Hilbert space $V$.
Then there exists a unique compact matrix quantum group $G$ with 
fundamental representation $u^\circ$ on $V$
and unitary representation $u^\bullet$ on $\overline{V}$, such that
\[
  \Hom(u^{x}, u^{y}) = \CC(x, y) \quad \forall x, y \in \colors.
\]
\end{theorem}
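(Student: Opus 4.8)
The plan is to reconstruct $G$ from $\CC$ as a universal object and then identify its intertwiner spaces with $\CC$, following the generators-and-relations approach of Gromada. First I would treat \textbf{uniqueness}: if $G$ and $G'$ both satisfy the conclusion, then both $\OO(G)$ and $\OO(G')$ are generated by the coefficients of $u^\circ$ and $u^\bullet$, and a matrix $T \in B(V^{\otimes x}, V^{\otimes y})$ satisfies $T u^x = u^y T$ precisely when $T \in \Hom(u^x, u^y) = \CC(x,y)$. Hence both Hopf $*$-algebras carry exactly the same relations among the generators, so the identity on generators extends to a $*$-isomorphism intertwining the comultiplications; passing to the universal $C^*$-completions gives $G = G'$ as compact matrix quantum groups.

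For \textbf{existence}, I would build the Hopf $*$-algebra by hand. Let $A$ be the universal unital $*$-algebra generated by symbols $u^\circ_{ij}$ and $u^\bullet_{ij}$ (indices running over fixed orthonormal bases of $V$ and $\overline{V}$), subject to the relation $T u^x = u^y T$ for every $x,y \in \colors$ and every $T \in \CC(x,y)$, where $u^x := u^{x_1} \otop \dots \otop u^{x_k}$ is the corresponding coefficient matrix. Read as intertwiner relations, the conjugate equations for $R \in \CC(1,\circ\bullet)$ and $S \in \CC(1,\bullet\circ)$, together with the relations coming from $R^*, S^* \in \CC$, are what force $u^\circ$ and $u^\bullet$ to be unitary in $A$; this is a direct computation with the zig-zag identities. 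I would then set $\Delta(u^c_{ij}) := \sum_k u^c_{ik} \otimes u^c_{kj}$, $\epsilon(u^c_{ij}) := \delta_{ij}$ for $c \in \{\circ,\bullet\}$, and define an antipode on the generators through $u^\bullet$, $R$ and $S$. Each is well defined because the defining relations are stable under the operation: applying $T u^x = u^y T$ twice shows $T(u^x \obot u^x) = (u^y \obot u^y)T$, so $\Delta$ respects the relations, and similarly for $\epsilon$ and the antipode (using the conjugate equations for the antipode axioms). Since $u^\circ, u^\bullet$ are unitary, every generator has norm at most one in any $*$-representation, so the universal $C^*$-seminorm is finite and yields a $C^*$-algebra $C(G)$ to which $\Delta$ extends; this produces the compact matrix quantum group $G$ with fundamental representation $u^\circ$.

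It then remains to prove the central claim $\Hom(u^x, u^y) = \CC(x,y)$. The inclusion $\CC(x,y) \subseteq \Hom(u^x, u^y)$ holds by construction. For the reverse inclusion I would use the Haar state $h$ on $C(G)$, which exists for any compact matrix quantum group, to form the averaging projection
\[
  P_{x,y}(T) := (\id_{B(V^{\otimes x}, V^{\otimes y})} \otimes h)\bigl( u^y \, (T \otimes 1) \, (u^x)^* \bigr),
\]
whose range is exactly $\Hom(u^x, u^y)$. The key point is that $P_{x,y}$ already takes values in $\CC(x,y)$: the values of $h$ on products of the matrix coefficients are governed by a Weingarten-type formula in which every contributing term is a composition of tensor products of elements of $\CC$ together with the duality morphisms $R,S$ and their adjoints. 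Since $\CC$ is closed under composition, tensor products and adjoints, contains the identities, and contains $R,S$, each such term lies in $\CC(x,y)$, hence so does $P_{x,y}(T)$. This yields $\Hom(u^x,u^y) \subseteq \CC(x,y)$ and completes the identification.

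The hard part will be precisely this last step: verifying that the Haar-averaging projection lands inside $\CC$. Concretely it amounts to the semisimplicity of $\Rep(G)$ together with Frobenius reciprocity, which is exactly where the rigidity axiom (the conjugate equations in \Cref{def:rep-category}(5)) is indispensable — it guarantees the existence of conjugates, forces semisimplicity, and lets one express the orthogonal projection onto each intertwiner space through the duality morphisms of $\CC$. Everything else (the Hopf-algebra verifications, the $C^*$-completion, and uniqueness) is routine bookkeeping, so I would concentrate the effort on making the Weingarten/graphical-calculus identity for $h$ precise and on checking that it closes up within the category.
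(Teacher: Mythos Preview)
The paper does not give its own proof of this theorem; it is stated as background and the reader is referred to \cite{woronowicz88,neshveyev13,malacarne16,gromada20a}, with the surrounding discussion explicitly adopting Gromada's generators-and-relations viewpoint. Your outline follows that same paradigm, so at the level of strategy you are aligned with what the paper invokes.

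Two points are worth flagging. First, in Gromada's construction $u^\bullet$ is not an independent family of generators: one takes only the $u^\circ_{ij}$ and \emph{defines} $u^\bullet := F\,\overline{u^\circ}\,F^{-1}$ with $F^i_j = R^{ji}$ (cf.\ \Cref{prop:u-bullet-impl-conj}). Then ``$R \in \CC(1,\circ\bullet)$'' is literally the relation $u^\circ(u^\circ)^* = 1$, and the remaining unitarity relations come from $S,R^*,S^*$. Your variant with two sets of generators is workable, but then you must also derive $u^\bullet = F\,\overline{u^\circ}\,F^{-1}$ from the intertwiner relations (the argument of \Cref{prop:u-black-formula}) before the unitarity claim goes through; as written, ``a direct computation with the zig-zag identities'' hides this step.

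Second, your proposed route to the reverse inclusion $\Hom(u^x,u^y)\subseteq\CC(x,y)$ via the Haar state and a Weingarten formula is not the argument used in the cited proofs and carries a circularity risk: the Weingarten formula is normally \emph{derived from} the knowledge that $\CC(x,y)$ spans the intertwiner space, so invoking it to establish that very fact needs care. The standard argument is purely algebraic: one shows that the ideal of relations in the free $*$-algebra is exactly the linear span of $T u^x - u^y T$ for $T\in\CC(x,y)$ (this is where rigidity is used, via Frobenius reciprocity, to reduce all relations to this form), so that any intertwiner relation holding in the quotient already comes from $\CC$. If you want to keep the averaging idea, you should instead argue that the orthogonal projection onto $\Hom(u^x,u^y)$ lies in the von Neumann algebra generated by $\CC(x,y)$ acting on $V^{\otimes x}\oplus V^{\otimes y}$, which is finite-dimensional and hence equal to the $*$-algebra generated by $\CC(x,y)$; this avoids the Haar state entirely.
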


First, let us comment on the uniqueness of the unitary representation $u^\bullet$ in the previous theorem.
Since $\CC$ contains a pair of morphisms $R$ and $S$ satisfying the conjugate equations, the representations $u^\circ$ and $u^\bullet$ are conjugate.
\Cref{prop:u-black-formula} then implies that $u^\bullet = F \overline{u^\circ} F^{-1}$ with $F^i_j = R^{ji}$. Thus, the representation $u^\bullet$ is 
uniquely determined by the category $\CC$.

Next, consider the uniqueness of the quantum group $G$. The proof 
of Woronowicz Tannaka-Krein duality in~\cite{gromada20a} shows that all relations
of the dense $*$-algebra $\OO(G)$ are spanned by the intertwiner relations
\[
  T u^{x} = u^{y} T  \qquad \forall x, y \in \colors, \ T \in \CC(x, y).
\]  
Hence, $\OO(G)$ is uniquely determined by the category $\CC$ up to $*$-isomorphism. 
In contrast, the $C^*$-algebra $C(G)$ is not unique but it is always possible to choose $C(G)$ as the maximal $C^*$-completion of $\OO(G)$.
In this case, $G$ is uniquely defined by the following universal property.

\begin{proposition}
Let $G$ be the compact matrix quantum group in \Cref{thm:tannaka-krein-duality}
and $H$ be a compact matrix quantum group with fundamental representation $w^\circ$ on $W$
and unitary representation $w^\bullet$ on $\overline{W}$. If
there exists a unitary $Q\colon V \to W$, such that 
\[
  \Hom(u^{x}, u^{y}) \subseteq {(Q^{-1})}^{\otimes y} \cdot \Hom(w^x, w^y) \cdot Q^{\otimes x}
  \quad \forall x, y\in \colors,
\]
then $H$ is a subgroup $G$ via $u^\circ \mapsto Q^{-1}w^\circ Q$.
\end{proposition}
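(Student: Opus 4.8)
The plan is to exploit the explicit presentation of $\OO(G)$ furnished by the proof of Woronowicz Tannaka-Krein duality, as recalled just before the statement: $\OO(G)$ is the universal unital $*$-algebra generated by the entries of $u^\circ$ and $u^\bullet$, with all relations spanned by the intertwiner relations $T u^x = u^y T$ for $x,y\in\colors$ and $T\in\CC(x,y)=\Hom(u^x,u^y)$. Since we may take $C(G)$ to be the maximal $C^*$-completion of $\OO(G)$, it suffices to produce a unital $*$-homomorphism $\varphi\colon\OO(G)\to C(H)$ sending $u^\circ\mapsto Q^{-1}w^\circ Q$; this extends to $C(G)\to C(H)$ because every $*$-homomorphism from $\OO(G)$ into a $C^*$-algebra is dominated by the maximal $C^*$-seminorm and hence continuous.

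First I set up the candidate images. For each $x\in\colors$ put $\hat u^x := (Q^{\otimes x})^{-1}\, w^x\, Q^{\otimes x} \in B(V^{\otimes x})\otimes C(H)$, noting that $(Q^{-1})^{\otimes y}=(Q^{\otimes y})^{-1}$ and that each $Q^{\otimes x}$ is unitary, so $\hat u^x$ is a unitary representation of $H$ on $V^{\otimes x}$. In particular $\hat u^\circ = Q^{-1}w^\circ Q$ and $\hat u^\bullet = \overline{Q}^{-1}w^\bullet\overline{Q}$. A short bilinearity computation from the definition of $\otop$ gives $\hat u^{xy}=\hat u^x\otop\hat u^y$ and $\hat u^1=1$, so the family $\{\hat u^x\}$ is compatible with the monoidal structure and determines the assignment $u^\circ\mapsto\hat u^\circ$, $u^\bullet\mapsto\hat u^\bullet$ on generators.

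The crux is to verify that this assignment respects every defining relation, i.e.\ that $T\hat u^x=\hat u^y T$ for all $T\in\CC(x,y)$. Here the hypothesis does exactly the required work: given $T\in\CC(x,y)=\Hom(u^x,u^y)$, the inclusion $\Hom(u^x,u^y)\subseteq (Q^{\otimes y})^{-1}\Hom(w^x,w^y)Q^{\otimes x}$ yields some $S\in\Hom(w^x,w^y)$ with $T=(Q^{\otimes y})^{-1}S\,Q^{\otimes x}$, and then, cancelling $Q^{\otimes x}(Q^{\otimes x})^{-1}=\id$ and using $Sw^x=w^y S$,
\[
  T\hat u^x = (Q^{\otimes y})^{-1} S\, w^x\, Q^{\otimes x}
            = (Q^{\otimes y})^{-1} w^y S\, Q^{\otimes x}
            = \hat u^y T.
\]
Thus the assignment descends to a unital $*$-homomorphism $\varphi\colon\OO(G)\to C(H)$ with $\varphi(u^x)=\hat u^x$, which extends to $\varphi\colon C(G)\to C(H)$ as above. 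Taking the unitary $Q^{-1}\colon W\to V$, the identity $\varphi(u^\circ)=Q^{-1}w^\circ Q=(Q^{-1})\,w^\circ\,(Q^{-1})^{-1}$ is precisely the condition defining $H\subseteq G$ via $u^\circ\mapsto Q^{-1}w^\circ Q$.

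The main obstacle I anticipate is the bookkeeping around the second color: one must check that the generator assignment is consistent as a $*$-homomorphism, i.e.\ that $\hat u^\bullet=\overline{Q}^{-1}w^\bullet\overline{Q}$ is compatible with the relation $u^\bullet=F\overline{u^\circ}F^{-1}$ encoded in $\OO(G)$. This is not immediate from the assignment alone; it follows because $R\in\CC(1,\circ\bullet)$ and $S\in\CC(1,\bullet\circ)$ satisfy the conjugate equations and, by the relation-preservation just verified (with $x=1$ and $y=\circ\bullet$, $\bullet\circ$), remain duality morphisms for the pair $(\hat u^\circ,\hat u^\bullet)$, so \Cref{prop:u-black-formula} forces $\hat u^\bullet=F\overline{\hat u^\circ}F^{-1}$ and no inconsistency arises. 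Everything else is routine: the relation-preserving identity is a one-liner once the substitution $T=(Q^{\otimes y})^{-1}S\,Q^{\otimes x}$ is in place, and the passage from $\OO(G)$ to the maximal $C^*$-completion $C(G)$ is standard.
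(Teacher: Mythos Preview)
Your proof is correct and follows exactly the route the paper intends: the paper does not give a formal proof of this proposition at all, instead deriving it as an immediate consequence of the preceding paragraph, where it is stated that $\OO(G)$ is presented by the intertwiner relations $T u^x = u^y T$ for $T\in\CC(x,y)$ and that $C(G)$ may be taken to be the maximal $C^*$-completion. Your argument spells out this derivation in detail, including the necessary consistency check that $\hat u^\bullet = F\overline{\hat u^\circ}F^{-1}$ via \Cref{prop:u-black-formula}, which the paper leaves implicit.
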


Motivated by Woronowicz Tannaka-Krein duality and the previous proposition, we introduce
the following notion of equivalence between compact matrix quantum groups.

\begin{definition}\label{def:equivalence}
Let $G$ be a compact matrix quantum group 
with fundamental representation $u^\circ$ and conjugate representation $u^\bullet$.
A compact matrix quantum group $H$ with fundamental representation $w^\circ$ on $W$
and unitary representation $w^\bullet$ on $\overline{W}$ is called \textit{equivalent}
to $G$ if there exists a unitary $Q\colon V \to W$ such that
\[
  \Hom(u^{x}, u^{y}) = {(Q^{-1})}^{\otimes y} \cdot \Hom(w^x, w^y) \cdot Q^{\otimes x}
  \quad \forall x, y\in \colors.
\] 
\end{definition}

One can check that the previous definition is indeed
an equivalence relation between compact matrix quantum groups. Further, Woronowicz Tannaka-Krein duality
and the previous discussion show that two compact matrix quantum groups 
$G$ and $H$ are equivalent if and only if $\OO(G)$ and $\OO(H)$ are $*$-isomorphic.
Moreover, this isomorphism is given by 
$u = Q^{-1} w Q$, where $u$ and $w$ are the fundamental representations
of $G$ and $H$ respectively.

\section{Combinatorics of spatial partitions}\label{sec:comb}

In the following, we introduce spatial partitions, their categories and our new base partitions as purely combinatorial objects.
These will then be used again in \Cref{sec:spatial-partition-QGs} to define spatial partition quantum groups.
Further, we construct two functors between categories of spatial partitions that will be used to 
prove our main results in \Cref{sec:spatial-partition-QGs} and \Cref{sec:proj-version}.

\subsection{Spatial partitions}

We begin with spatial partitions, which are the main combinatorial objects to 
define categories of spatial partitions.
In contrast to~\cite{cebron16}, we will not focus on spatial partitions with only white points, 
but we will consider colored partitions as described in~\cite[Remark 2.7]{cebron16}.

\begin{definition}
Let $m \in \N$. A \textit{spatial partition} on 
$m$ \textit{levels}
is a tuple $(x, y, \{ B_i \})$, where $x, y \in \colors$ and 
$\{B_i\}$ is a decomposition of the points
\[
  \{1, \ldots, \abs{x} + \abs{y}\} \times \{1, \ldots, m\}
\]
into disjoint subsets called \textit{blocks}.
We call $x$ the \textit{upper colors} and $y$ the \textit{lower colors}
of the partition.
\end{definition}
Given a spatial partition, we can visualize it as a colored string-diagram as follows.
First, we draw an upper layer consisting of the points $(i, j)$ with $1 \leq i \leq \abs{x}$ 
and a lower layer consisting of the points $(i, j)$ with $\abs{x} < i \leq \abs{y}$, where 
in both layers $i$ increases from the left to the right and $j$ from the front to the back.
Then we connect all points in the same blocks by lines and assign each upper point 
$(i, j)$ the color $x_{i}$ and each lower point $(\abs{x} + i, j)$ the color $y_i$.

\begin{example}
  Consider the spatial partition on two levels with upper colors $x = {\circ}{\bullet}$, lower colors $y = {\circ}{\circ}{\bullet}$
  and blocks
  \[
    \{(1, 1), (3, 2)\}, \{(1, 2), (3, 1)\}, \{(2, 1), (4, 1)\}, \{(2, 2), (4, 2), (5, 2)\}, \{(5, 1)\}.
  \]
  Visualizing it as string-diagram yields
  \[%
  \partition[3d]{%
      %
      %
      \label{0}{1}{0}{{\Huge(1,1)}}
      \label{0}{1}{1}{{\Huge(1,2)}}
      \label{1}{1}{0}{{\Huge(2,1)}}
      \label{1}{1}{1}{{\Huge(2,2)}}
      \label{0}{0}{0}{{\Huge(3,1)}}
      \label{0}{0}{1}{{\Huge(3,2)}}
      \label{1}{0}{0}{{\Huge(4,1)}}
      \label{1}{0}{1}{{\Huge(4,2)}}
      \label{2}{0}{0}{{\Huge(5,1)}}
      \label{2}{0}{1}{{\Huge(5,2)}}
  }%
  \quad
  \longrightarrow
  \quad
  \partition[3d]{%
      %
      \line{0}{0}{0}{0}{1}{1}
      \line{0}{1}{0}{0}{0}{1}
      \line{1}{0}{0}{1}{1}{0}
      \line{1}{0}{1}{1}{1}{1}
      \line{2}{0}{0}{2}{0.35}{0}
      \line{2}{0}{1}{2}{0.35}{1}
      \line{1}{0.35}{1}{2}{0.35}{1}
      \label{0}{1}{0}{{\Huge(1,1)}}
      \label{0}{1}{1}{{\Huge(1,2)}}
      \label{1}{1}{0}{{\Huge(2,1)}}
      \label{1}{1}{1}{{\Huge(2,2)}}
      \label{0}{0}{0}{{\Huge(3,1)}}
      \label{0}{0}{1}{{\Huge(3,2)}}
      \label{1}{0}{0}{{\Huge(4,1)}}
      \label{1}{0}{1}{{\Huge(4,2)}}
      \label{2}{0}{0}{{\Huge(5,1)}}
      \label{2}{0}{1}{{\Huge(5,2)}}
  }%
  \quad
  \longrightarrow
  \quad
  \partition[3d]{%
      %
      \line{0}{0}{0}{0}{1}{1}
      \line{0}{1}{0}{0}{0}{1}
      \line{1}{0}{0}{1}{1}{0}
      \line{1}{0}{1}{1}{1}{1}
      \line{2}{0}{0}{2}{0.35}{0}
      \line{2}{0}{1}{2}{0.35}{1}
      \line{1}{0.35}{1}{2}{0.35}{1}
      \point{0}{1}{0}{white}
      \point{0}{1}{1}{white}
      \point{1}{1}{0}{black}
      \point{1}{1}{1}{black}
      \point{0}{0}{0}{white}
      \point{0}{0}{1}{white}
      \point{1}{0}{0}{white}
      \point{1}{0}{1}{white}
      \point{2}{0}{0}{black}
      \point{2}{0}{1}{black}
  }.%
  \]
\end{example}

Note that spatial partitions on one level with only white points are exactly
the partitions appearing for example in the definition of the Temperly-Liebe algebra~\cite{temperly71},
the Brauer algebra~\cite{brauer37} or more generally orthogonal easy quantum groups~\cite{banica07}.

In the following, we denote with $\PP^{(m)}$ the set
of all spatial partitions on $m$ levels and 
with $\PP^{(m)}(x, y)$ the set of all spatial partitions on $m$ levels
with upper colors $x$ and lower colors $y$.
Further, we introduce the following spatial partitions that will be used throughout
the rest of the paper.

\begin{definition}\label{def:special-partitions}
\leavevmode
\begin{enumerate}
\item Let $x \in \colors$. Then we denote with $\id_x \in \PP^{(1)}(x, x)$ the \textit{identity partition} on $x$.
It is the spatial partition on one level with upper and lower colors $x$, where 
each upper point is directly connected to the corresponding lower point, e.g.
\[
 \id_\circ = \partId, \qquad \id_\bullet = \partIdB, \qquad \id_{{\circ}{\bullet}{\circ}} = \partIdWBW.
\]
\item Let $p \in \PP^{(1)}(x, y)$ be a spatial partition. Then we denote with
$p^{(k)} \in \PP^{(k)}(x, y)$ the \textit{amplification} of $p$. It is obtained by placing $k$ copies of $p$
behind each other, e.g.
\[
  \id_\circ^{(2)} = \partition[3d]{%
  \line{0}{0}{0}{0}{1}{0}
  \line{0}{0}{1}{0}{1}{1}
  \point{0}{0}{0}{white}
  \point{0}{0}{1}{white}
  \point{0}{1}{0}{white}
  \point{0}{1}{1}{white}
}, \qquad 
  {\partPairWB}^{(3)} = 
  \partition[3d]{%
  \line{0}{0}{0}{0}{0.5}{0}
  \line{0}{0.5}{0}{1}{0.5}{0}
  \line{1}{0.5}{0}{1}{0}{0}
  \line{0}{0}{1}{0}{0.5}{1}
  \line{0}{0.5}{1}{1}{0.5}{1}
  \line{1}{0.5}{1}{1}{0}{1}
  \line{0}{0}{2}{0}{0.5}{2}
  \line{0}{0.5}{2}{1}{0.5}{2}
  \line{1}{0.5}{2}{1}{0}{2}
  \point{0}{0}{0}{white}
  \point{1}{0}{0}{black}
  \point{0}{0}{1}{white}
  \point{1}{0}{1}{black}
  \point{0}{0}{2}{white}
  \point{1}{0}{2}{black}
}.
\]
\item Let $x, y \in \{\circ, \bullet\}$ and $\sigma \in S_m$ be a permutation on $\{1, \ldots, m\}$. 
Then we denote with $\sigma_{xy} \in \PP^{(m)}(1, xy)$ the spatial partition with 
lower points $xy$ and blocks 
given by $\{ (1, i), (2, \sigma(i)) \}$ for all $1 \leq i \leq m$.
For example, we have
\[
  {(1)(2)}_{\circ\bullet} = \partition[3d]{
    \line{0}{0}{0}{0}{0.5}{0}
    \line{0}{0}{1}{0}{0.5}{1}
    \line{1}{0}{0}{1}{0.5}{0}
    \line{1}{0}{1}{1}{0.5}{1}
    \line{0}{0.5}{0}{1}{0.5}{0}
    \line{0}{0.5}{1}{1}{0.5}{1}
    \point{0}{0}{0}{white}
    \point{0}{0}{1}{white}
    \point{1}{0}{0}{black}
    \point{1}{0}{1}{black}
  }, \qquad 
  {(1 2)}_{\circ\bullet} = \partition[3d]{
    \line{0}{0}{0}{0}{0.5}{0}
    \line{0}{0}{1}{0}{0.5}{1}
    \line{1}{0}{0}{1}{0.5}{0}
    \line{1}{0}{1}{1}{0.5}{1}
    \line{0}{0.5}{0}{1}{0.5}{1}
    \line{0}{0.5}{1}{1}{0.5}{0}
    \point{0}{0}{0}{white}
    \point{0}{0}{1}{white}
    \point{1}{0}{0}{black}
    \point{1}{0}{1}{black}
  }, \qquad 
  {(1 3 2)}_{\circ\circ} = \partition[3d]{
    \line{0}{0}{2}{0}{0.5}{2}
    \line{0}{0}{1}{0}{0.5}{1}
    \line{0}{0}{0}{0}{0.5}{0}
    \line{1}{0}{0}{1}{0.5}{0}
    \line{1}{0}{1}{1}{0.5}{1}
    \line{1}{0}{2}{1}{0.5}{2}
    \point{0}{0}{2}{white}
    \line{0}{0.5}{2}{1}{0.5}{1}
    \line{0}{0.5}{1}{1}{0.5}{0}
    \line{0}{0.5}{0}{1}{0.5}{2}
    \point{0}{0}{0}{white}
    \point{0}{0}{1}{white}
    \point{1}{0}{0}{white}
    \point{1}{0}{1}{white}
    \point{1}{0}{2}{white}.
  },
\]
where the corresponding permutations are written in cycle notation.
Similarly, we denote with $\sigma^x_y \in \PP^{(m)}(x, y)$ the
rotated version of $\sigma_{xy}$ with upper color $x$ and lower color $y$, e.g.\ 
\[
    {(1)(2)}^\circ_\bullet = \partition[3d]{
      \line{0}{0}{0}{0}{1}{0}
      \line{0}{0}{1}{0}{1}{1}
      \point{0}{0}{0}{black}
      \point{0}{0}{1}{black}
      \point{0}{1}{0}{white}
      \point{0}{1}{1}{white}
    }, \qquad 
    {(1 2)}^\circ_\bullet = \partition[3d]{
      \line{0}{0}{0}{0}{1}{1}
      \line{0}{0}{1}{0}{1}{0}
      \point{0}{0}{0}{black}
      \point{0}{0}{1}{black}
      \point{0}{1}{0}{white}
      \point{0}{1}{1}{white}
    }, \qquad 
    {(1 3 2)}^\circ_\circ = \partition[3d]{
      \line{0}{1}{2}{0}{0}{1}
      \line{0}{1}{1}{0}{0}{0}
      \line{0}{1}{0}{0}{0}{2}
      \point{0}{0}{0}{white}
      \point{0}{0}{1}{white}
      \point{0}{0}{2}{white}
      \point{0}{1}{0}{white}
      \point{0}{1}{1}{white}
      \point{0}{1}{2}{white}.
    }.
\]
\end{enumerate}
\end{definition}

Given spatial partitions on a fixed number of levels $m$, we can use the following operations to construct 
new spatial partitions.

\begin{definition}
\leavevmode
\begin{enumerate}
\item Let $p \in \PP^{(m)}(x, y)$ and $q \in \PP^{(m)}(w, z)$. Then 
the \textit{tensor product} $p\otimes q \in \PP^{(m)}(xw, yz)$ is obtained by placing $p$ and $q$
next to each other, e.g.
\[
  \partition[3d]{%
  \line{0}{0}{0}{0}{1}{0}
  \line{0}{0}{1}{0}{1}{1}
  \point{0}{1}{0}{white}
  \point{0}{1}{1}{white}
  \point{0}{0}{0}{white}
  \point{0}{0}{1}{white}
  }
  \quad \otimes \quad 
  \partition[3d]{%
  \line{0}{0}{0}{0}{0.35}{0}
  \line{0}{0}{1}{0}{0.35}{1}
  \line{0}{0.35}{0}{0}{0.35}{1}
  \point{0}{0}{0}{black}
  \point{0}{0}{1}{black}
  } 
  \quad = \quad 
  \partition[3d]{%
  \line{0}{0}{0}{0}{1}{0}
  \line{0}{0}{1}{0}{1}{1}
  \line{1}{0}{0}{1}{0.35}{0}
  \line{1}{0}{1}{1}{0.35}{1}
  \line{1}{0.35}{0}{1}{0.35}{1}
  \point{0}{1}{0}{white}
  \point{0}{1}{1}{white}
  \point{0}{0}{0}{white}
  \point{0}{0}{1}{white}
  \point{1}{0}{0}{black}
  \point{1}{0}{1}{black}
  }.
\]
\item Let $p \in \PP^{(m)}(x, y)$. Then the \textit{involution}
$p^* \in \PP^{(m)}(y, x)$ is obtained by swapping the upper and lower points, e.g.
\[
  {\left(\partition[3d]{%
    \line{0}{0.65}{0}{0}{1}{0}
    \line{0}{0}{0}{0}{0.35}{0}
    \line{0}{0.35}{0}{0}{0.35}{1}
    \line{0}{0}{1}{0}{1}{1}
    \line{1}{0}{1}{1}{0.35}{1}
    \line{1}{0}{0}{1}{0.35}{0}
    \line{1}{0.35}{1}{1}{0.35}{0}
    \point{0}{0}{0}{white}
    \point{0}{0}{1}{white}
    \point{1}{0}{0}{black}
    \point{1}{0}{1}{black}
    \point{0}{1}{0}{white}
    \point{0}{1}{1}{white}}\right)}^*
    \quad = \quad
    \partition[3d]{%
    \line{0}{0.35}{0}{0}{0}{0}
    \line{0}{1}{0}{0}{0.65}{0}
    \line{0}{0.65}{0}{0}{0.65}{1}
    \line{0}{1}{1}{0}{0}{1}
    \line{1}{1}{1}{1}{0.65}{1}
    \line{1}{1}{0}{1}{0.65}{0}
    \line{1}{0.65}{1}{1}{0.65}{0}
    \point{0}{1}{0}{white}
    \point{0}{1}{1}{white}
    \point{1}{1}{0}{black}
    \point{1}{1}{1}{black}
    \point{0}{0}{0}{white}
    \point{0}{0}{1}{white}}.
\]
\item Let $p \in \PP^{(m)}(y, z)$ and $q \in \PP^{(m)}(x, y)$. Then the 
\textit{composition} $pq \in \PP^{(m)}(x, z)$ is obtained by first placing $q$ on top 
of $p$ and identifying the lower points of $q$ with the upper points of $p$.
Then these points are removed and the resulting blocks are simplified, e.g.
\[
  \partition[3d]{%
  \line{0}{0}{0}{0}{1}{0}
  \line{0}{0}{1}{0}{1}{1}
  \line{1}{0}{0}{1}{0.35}{0}
  \line{1}{0}{1}{1}{0.35}{1}
  \line{1}{0.35}{0}{1}{0.35}{1}
  \line{1}{1}{0}{1}{0.65}{0}
  \line{1}{1}{1}{1}{0.65}{1}
  \line{1}{0.65}{0}{1}{0.65}{1}
  \line{2}{1}{0}{2}{0.65}{0}
  \line{2}{1}{1}{2}{0.65}{1}
  \line{2}{0.65}{0}{2}{0.65}{1}
  \point{0}{1}{0}{black}
  \point{0}{1}{1}{black}
  \point{1}{1}{0}{white}
  \point{1}{1}{1}{white}
  \point{2}{1}{0}{white}
  \point{2}{1}{1}{white}
  \point{0}{0}{0}{black}
  \point{0}{0}{1}{black}
  \point{1}{0}{0}{white}
  \point{1}{0}{1}{white}}
  \quad \cdot \quad
  \partition[3d]{%
  \line{0}{0}{0}{0}{1}{0}
  \line{0}{1}{1}{1}{0}{1}
  \line{0}{0}{1}{0}{0.35}{1}
  \line{1}{0}{0}{1}{0.35}{0}
  \line{0}{0.35}{1}{1}{0.35}{0}
  \line{2}{0}{0}{2}{0.35}{0}
  \line{2}{0}{1}{2}{0.35}{1}
  \line{2}{0.35}{0}{2}{0.35}{1}
  \point{0}{1}{0}{white}
  \point{0}{1}{1}{white}
  \point{0}{0}{0}{black}
  \point{0}{0}{1}{black}
  \point{1}{0}{0}{white}
  \point{1}{0}{1}{white}
  \point{2}{0}{0}{white}
  \point{2}{0}{1}{white}}
  \quad = \quad 
  \partition[3d]{%
  \line{0}{0}{0}{0}{1}{0}
  \line{0}{0}{1}{0}{1}{1}
  \line{1}{0}{0}{1}{0.35}{0}
  \line{1}{0}{1}{1}{0.35}{1}
  \line{1}{0.35}{0}{1}{0.35}{1}
  \line{1}{1}{0}{1}{0.65}{0}
  \line{1}{1}{1}{1}{0.65}{1}
  \line{1}{0.65}{0}{1}{0.65}{1}
  \line{2}{1}{0}{2}{0.65}{0}
  \line{2}{1}{1}{2}{0.65}{1}
  \line{2}{0.65}{0}{2}{0.65}{1}
  \line{0}{1}{0}{0}{2}{0}
  \line{0}{2}{1}{1}{1}{1}
  \line{0}{1}{1}{0}{1.35}{1}
  \line{1}{1}{0}{1}{1.35}{0}
  \line{0}{1.35}{1}{1}{1.35}{0}
  \line{2}{1}{0}{2}{1.35}{0}
  \line{2}{1}{1}{2}{1.35}{1}
  \line{2}{1.35}{0}{2}{1.35}{1}
  \point{0}{1}{0}{black}
  \point{0}{1}{1}{black}
  \point{1}{1}{0}{white}
  \point{1}{1}{1}{white}
  \point{2}{1}{0}{white}
  \point{2}{1}{1}{white}
  \point{0}{0}{0}{black}
  \point{0}{0}{1}{black}
  \point{1}{0}{0}{white}
  \point{1}{0}{1}{white}
  \point{0}{2}{0}{white}
  \point{0}{2}{1}{white}
  }
  \quad = \quad 
  \partition[3d]{%
  \line{0}{0}{0}{0}{1}{0}
  \line{0}{0}{1}{0}{1}{1}
  \line{1}{0}{0}{1}{0.35}{0}
  \line{1}{0}{1}{1}{0.35}{1}
  \line{1}{0.35}{0}{1}{0.35}{1}
  \point{0}{1}{0}{white}
  \point{0}{1}{1}{white}
  \point{0}{0}{0}{black}
  \point{0}{0}{1}{black}
  \point{1}{0}{0}{white}
  \point{1}{0}{1}{white}
  }.
\]
\end{enumerate}
\end{definition}

Finally, we introduce the following properties
of spatial partitions that will be used later.

\begin{definition}
Let $p \in \PP^{(m)}(x, y)$ be a spatial partition. Then $p$ is called
\begin{enumerate}
\item \textit{invertible} if there exists a spatial partition $p^{-1} \in P^{(m)}(y, x)$
such that $p^{-1}p = \id_x$ and $pp^{-1} = \id_y$.
\item \textit{$\nn$-graded} for some $\nn := (n_1, \dots, n_m) \in \N^m$ 
if $n_k = n_\ell$ for all points $(i, k)$ and $(j, \ell)$
that are in the same block in $p$.
\item \textit{pair partition} if every block of $p$ has size two.
\end{enumerate}
\end{definition}

Note that the inverse of a spatial partition $p \in P^{(m)}(x, y)$ is always 
given by $p^{-1} = p^*$ and in this case $p$ and $p^*$ are both pair partitions whose blocks 
form a bijection between the corresponding upper and lower points, e.g.
\[
    p = \partition[3d]{
      \line{0}{0}{0}{0}{1}{1}
      \line{0}{0}{1}{0}{1}{2}
      \line{0}{0}{2}{0}{1}{0}
      \point{0}{0}{0}{black}
      \point{0}{0}{1}{black}
      \point{0}{0}{2}{black}
      \point{0}{1}{0}{white}
      \point{0}{1}{1}{white}
      \point{0}{1}{2}{white}
    }, \qquad 
    p^{-1} = \partition[3d]{
      \line{0}{0}{0}{0}{1}{2}
      \line{0}{0}{1}{0}{1}{0}
      \line{0}{0}{2}{0}{1}{1}
      \point{0}{0}{0}{white}
      \point{0}{0}{1}{white}
      \point{0}{0}{2}{white}
      \point{0}{1}{0}{black}
      \point{0}{1}{1}{black}
      \point{0}{1}{2}{black}
    }.
\]
In particular, if $p$ is invertible, then $\abs{x} = \abs{y}$.
In the following, we present a proof of this statement for the case $\abs{x} = \abs{y} = 1$,
which will be relevant later.

\begin{lemma}\label{lem:inv-perm}
Let $p \in \PP^{(m)}(x, y)$ be invertible with $\abs{x} = \abs{y} = 1$.
Then $p = \sigma^x_y$ and $p^{-1} = {(\sigma^{-1})}^y_x$ for 
a permutation $\sigma \in S_m$.
\end{lemma}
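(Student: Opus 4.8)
The plan is to deduce from the composition law that invertibility forces every block of $p$ to contain exactly one upper and one lower point, and then to read off the permutation $\sigma$ directly from these blocks. Write $a_1, \dots, a_m$ for the upper points $(1, j)$ of $p$. The only structural feature of composition I use is this: forming $p^{-1} p$ (resp.\ $p p^{-1}$) amounts to taking the transitive closure of the union of the two block relations after identifying the middle points, and then discarding those middle points. In particular this operation may merge blocks but never separates points already lying in a common block, so any two surviving points connected within the top or the bottom partition remain connected in the composite. The identity $\id_x^{(m)} \in \PP^{(m)}(x, x)$, whose blocks are $\{(1, j), (2, j)\}$, has the property that each of its blocks consists of one upper and one lower point; in particular distinct upper points lie in distinct blocks and there are no singleton blocks.

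First I would exclude blocks of $p$ with two upper points. In $p^{-1} p = \id_x^{(m)}$ the partition $p$ sits on top, so $a_1, \dots, a_m$ are exactly the upper points of the composite. If some block of $p$ contained two upper points $a_i, a_j$, they would stay in a common block of the composite, contradicting the fact that distinct upper points of the identity lie in distinct blocks. Running the symmetric argument on $p p^{-1} = \id_y^{(m)}$, where $p$ sits on the bottom and supplies the lower points of the composite, excludes blocks of $p$ with two lower points. Singletons are handled the same way: a block $\{a_i\}$ of $p$ meets no lower point of $p$, hence connects to nothing in $p^{-1} p$ and survives there as a singleton block, which $\id_x^{(m)}$ does not have; the pure-lower singletons are excluded via $p p^{-1}$.

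These four exclusions leave every block of $p$ with exactly one upper and one lower point. As $p$ has $m$ upper and $m$ lower points, this produces $m$ blocks, and letting $\sigma(i)$ denote the level of the lower point sharing a block with $a_i$ defines a bijection $\sigma \in S_m$, which is precisely the statement $p = \sigma^x_y$. The same reasoning applied to the (also invertible) partition $p^{-1}$ gives $p^{-1} = \tau^y_x$ for some $\tau \in S_m$; tracing one strand through $p^{-1} p = \id_x^{(m)}$ — upper level $i$ is joined by $p$ to middle level $\sigma(i)$ and then by $p^{-1}$ to lower level $\tau(\sigma(i))$ — forces $\tau \circ \sigma = \id$, so that $p^{-1} = (\sigma^{-1})^y_x$.

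The step I expect to demand the most care is the book-keeping around composition: I must state precisely which boundary points of $p^{-1} p$ originate from $p$ and which from $p^{-1}$, and justify that restricting a block of the composite to its surviving points never breaks apart a block of $p$. Once composition is phrased as the transitive closure described above, the four exclusion arguments and the extraction of $\sigma$ become routine, so this connectivity book-keeping is the only genuinely delicate point.
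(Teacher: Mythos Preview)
Your proof is correct and complete; the connectivity book-keeping you flag as delicate is exactly right, since composition is a transitive closure on the identified middle points and therefore can only merge blocks, never split them.

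Your route is genuinely different from the paper's. The paper invokes the through-block count $t(p)$ (the number of blocks meeting both rows) together with the inequality $t(qp) \leq \min(t(q), t(p))$ from~\cite{freslon16a}; from $m = t(\id_x^{(m)}) = t(p^{-1}p) \leq t(p) \leq m$ it gets $t(p) = m$ in one line, which forces every block to be a through-block pair. Your argument instead works by direct exclusion: you rule out blocks with two upper points via $p^{-1}p$, blocks with two lower points via $pp^{-1}$, and singletons separately, and then read off the permutation. The paper's approach is shorter because it outsources the combinatorics to a known lemma, while yours is self-contained and makes the obstruction to each bad block shape explicit; in particular your argument does not need any external reference and would survive transplantation to a setting where the $t(qp)$ inequality has not been established.
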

\begin{proof}
As in~\cite{freslon16a}, we denote with $t(p)$ the number of through-blocks of $p$, i.e.\ the number of blocks of $p$ that contain both an upper and a lower point.
Then $t(p) \leq m$ and equality holds if and only if $p = \sigma^x_y$ for a permutation $\sigma \in S_m$. 
Further, $t(q p) \leq \min(t(q), t(p))$ for any $q \in \PP^{(m)}(y, x)$, see~\cite[Remark 2.6]{freslon16a}.
Thus, if $p$ is invertible, then
\[
  m = t(\id_x^{(m)}) = t(p^{-1} \cdot p) \leq \min(t(p^{-1}), t(p^{})) \leq t(p) \leq m.
\]
This shows $t(p) = m$ and similarly one obtains $t(p^{-1}) = m$. 
Thus, both $p$ and $p^{-1}$
are of the form $\sigma^x_y$ and ${(\sigma^{-1})}^y_x$ for a permutation $\sigma \in S_m$.
\end{proof}

\subsection{Categories of spatial partitions}

Next, we introduce categories of spatial partitions
as purely combinatorial objects. These will be used again in \Cref{sec:spatial-partition-QGs},
where we interpret categories of spatial partitions as representation
categories of compact matrix quantum groups.

\begin{definition}
A \textit{category of spatial partitions} on $m$ levels is 
a subset $\CC \subseteq \PP^{(m)}$ that
\begin{enumerate}
  \item is closed under compositions, tensor products and involutions,
  \item contains $\id_\circ$ and $\id_\bullet$.
\end{enumerate}
\end{definition}
If $\CC \subseteq \PP^{(m)}$ is a category of spatial partitions, then 
we denote with $\CC(x, y)$ the set of all spatial partitions in $\CC$
with upper colors $x$ and lower colors $y$.

Note that in contrast to~\cite{cebron16}, we do not include
the base partitions ${\partPairWB}^{(m)}$ and ${\partPairBW}^{(m)}$ in the definition. 
Thus, our definition is more general and includes all categories of spatial partitions
in the sense of Cébron-Weber as special case. However, we will come back to the role of 
these base partitions later. 
Examples of categories of spatial partitions include
\begin{enumerate}
\item the set $\PP^{(m)}$ of all spatial partitions on $m$ levels,
\item the set $\PP^{(m)}_2$ of all spatial pair partitions on $m$ levels,
\item the set $\mathcal{NC} \subseteq \PP^{(1)}$ of all non-crossing partitions, i.e.\ partitions that can be drawn without crossing lines in two dimensions.
\end{enumerate}
Additional examples of categories of spatial partitions can for example be found in~\cite{raum16,tarrago18,cebron16}. 
As in the previous examples, categories of spatial partitions are often defined
by specifying a property of spatial partitions. However, it is also possible to 
define categories of spatial partition by a generating set. 

\begin{definition}
  Let $\CC_0$ be a set of spatial partitions. Then we denote with $\CC := \angles{\CC_0}$ the category
  \textit{generated} by $\CC_0$. It is the smallest category of spatial partitions containing $\CC_0$
  and consists of all finite combinations of elements in $\CC_0$ and 
  the base partitions $\id_\circ$ and $\id_\bullet$.
\end{definition}

Consider again the categories of spatial partitions from the previous example.
Using generators, these 
categories can be written as
\[
    \PP^{(1)} = \big\langle{ \partPair, \partCross, \partChair}\big\rangle, \qquad
    \PP^{(1)}_2 = \big\langle{ \partPair, \partCross}\big\rangle, \qquad
    \mathcal{NC} = \big\langle{ \partPair, \partChair}\big\rangle,
\]
see~\cite{raum16} for further details. Moreover, generators for general $\PP^{(m)}$ and $\PP^{(m)}_2$ can be found in~\cite{cebron16}.

In the original definition of spatial partition quantum groups in~\cite{cebron16}, Cébron-Weber 
include the additional base partitions ${\partPairWB}^{(m)}$ and ${\partPairBW}^{(m)}$. These are used 
in the construction of spatial partitions quantum groups and guarantee that the representation $\overline{u}$
is unitary.
However, as discussed in \Cref{sec:repr-categories} and \Cref{sec:tannaka-krein},
it is only required that there exists a conjugate representation $u^\bullet$ that is not necessarily
given by $\overline{u}$. This is equivalent to the existence of solutions to the conjugate
equations in the sense of \Cref{def:conjugate-repr}. Thus, translating the conjugate equations into the setting of spatial 
partitions yields the following definition. 

\begin{definition}\label{def:rigid}
Let $\CC \subseteq \PP^{(m)}$ be a category of spatial partitions. Then 
$\CC$ is called \textit{rigid} if it contains a pair of duality partitions
$r \in \CC(1, \circ\bullet)$ and $s \in \CC(1, \bullet\circ)$ satisfying
the conjugate equations
\[
  (r^* \otimes \id_\circ) \cdot (\id_\circ \otimes s) = \id_\circ,
  \qquad 
  (s^* \otimes \id_\bullet) \cdot (\id_\bullet \otimes r) = \id_\bullet.
\]
\end{definition}

In the case of spatial partitions on one level, the conjugate
equations can be visualized diagrammatically as follows:
\def\partSnakeW{\partition[2d]{
  \line{3}{0}{0}{3}{1}{0}
  \line{0}{2}{0}{0}{1}{0}
  \line{0}{1}{0}{0}{0.65}{0}
  \line{1.5}{1}{0}{1.5}{0.65}{0}
  \line{0}{0.65}{0}{1.5}{0.65}{0}
  \line{3}{1}{0}{3}{1.35}{0}
  \line{1.5}{1}{0}{1.5}{1.35}{0}
  \line{3}{1.35}{0}{1.5}{1.35}{0}
  \point{0}{2}{0}{white}
  \point{0}{1}{0}{white}
  \point{1.5}{1}{0}{black}
  \point{3}{1}{0}{white}
  \point{3}{0}{0}{white}
  \label{0.75}{0.3}{0}{\scalebox{3}{$r^*$}}
  \label{2.25}{1.7}{0}{\scalebox{3}{$s$}}}}
\def\partSnakeB{\partition[2d]{
  \line{3}{0}{0}{3}{1}{0}
  \line{0}{2}{0}{0}{1}{0}
  \line{0}{1}{0}{0}{0.65}{0}
  \line{1.5}{1}{0}{1.5}{0.65}{0}
  \line{0}{0.65}{0}{1.5}{0.65}{0}
  \line{3}{1}{0}{3}{1.35}{0}
  \line{1.5}{1}{0}{1.5}{1.35}{0}
  \line{3}{1.35}{0}{1.5}{1.35}{0}
  \point{0}{2}{0}{black}
  \point{0}{1}{0}{black}
  \point{1.5}{1}{0}{white}
  \point{3}{1}{0}{black}
  \point{3}{0}{0}{black}
  \label{0.75}{0.3}{0}{\scalebox{3}{$s^*$}}
  \label{2.25}{1.7}{0}{\scalebox{3}{$r$}}}}
\[
  {\partSnakeW}
  \ = \
  {\partId},
  \qquad 
  {\partSnakeB}
    \ = \
  {\partIdB}.
\]
In particular, this shows that $r = \partPairWB$ and $s = \partPairBW$ 
are the only solutions in the case $m = 1$.
Further, one checks that the duality partitions $r = \partPairWB^{(m)}$ and $s = \partPairBW^{(m)}$
of Cébron-Weber satisfy the conjugate equations, which implies that 
categories of spatial partitions in the sense of~\cite{cebron16} are always rigid.
However, for $m \geq 2$, there exist additional solutions given by spatial partitions like
\begin{align*}
  r = {\partition[3d]{
    \line{0}{0}{0}{0}{0.5}{0}
    \line{0}{0}{1}{0}{0.5}{1}
    \line{1}{0}{0}{1}{0.5}{0}
    \line{1}{0}{1}{1}{0.5}{1}
    \line{0}{0.5}{0}{1}{0.5}{1}
    \line{0}{0.5}{1}{1}{0.5}{0}
    \point{0}{0}{0}{white}
    \point{0}{0}{1}{white}
    \point{1}{0}{0}{black}
    \point{1}{0}{1}{black}
  }}, \quad s = {\partition[3d]{
    \line{0}{0}{0}{0}{0.5}{0}
    \line{0}{0}{1}{0}{0.5}{1}
    \line{1}{0}{0}{1}{0.5}{0}
    \line{1}{0}{1}{1}{0.5}{1}
    \line{0}{0.5}{0}{1}{0.5}{1}
    \line{0}{0.5}{1}{1}{0.5}{0}
    \point{0}{0}{0}{black}
    \point{0}{0}{1}{black}
    \point{1}{0}{0}{white}
    \point{1}{0}{1}{white}
  }}
  &&
  \text{and}
  &&
  r = {\partition[3d]{
    \line{0}{0}{2}{0}{0.5}{2}
    \line{0}{0}{1}{0}{0.5}{1}
    \line{0}{0}{0}{0}{0.5}{0}
    \line{1}{0}{0}{1}{0.5}{0}
    \line{1}{0}{1}{1}{0.5}{1}
    \line{1}{0}{2}{1}{0.5}{2}
    \point{0}{0}{2}{white}
    \line{0}{0.5}{2}{1}{0.5}{1}
    \line{0}{0.5}{1}{1}{0.5}{0}
    \line{0}{0.5}{0}{1}{0.5}{2}
    \point{0}{0}{0}{white}
    \point{0}{0}{1}{white}
    \point{1}{0}{0}{black}
    \point{1}{0}{1}{black}
    \point{1}{0}{2}{black}
  }}, \quad s = {\partition[3d]{
    \line{0}{0}{0}{0}{0.5}{0}
    \line{0}{0}{1}{0}{0.5}{1}
    \line{0}{0}{2}{0}{0.5}{2}
    \line{1}{0}{0}{1}{0.5}{0}
    \line{1}{0}{1}{1}{0.5}{1}
    \line{1}{0}{2}{1}{0.5}{2}
    \point{0}{0}{2}{black}
    \line{0}{0.5}{0}{1}{0.5}{1}
    \line{0}{0.5}{1}{1}{0.5}{2}
    \line{0}{0.5}{2}{1}{0.5}{0}
    \point{0}{0}{0}{black}
    \point{0}{0}{1}{black}
    \point{1}{0}{0}{white}
    \point{1}{0}{1}{white}
    \point{1}{0}{2}{white}
  }}.
\end{align*}

The following proposition characterizes all possible solutions
of the conjugate equations in the context of spatial partitions.

\begin{proposition}\label{prop:conj-eq-solutions}
Duality partitions in the sense of \Cref{def:rigid} are exactly
of the form $r = \sigma_{\circ\bullet}$ and $s = \sigma^{-1}_{\bullet\circ}$ for a permutation $\sigma \in S_m$.
\end{proposition}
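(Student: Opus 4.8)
The statement has two directions. The converse — that $r=\sigma_{\circ\bullet}$ and $s=\sigma^{-1}_{\bullet\circ}$ genuinely solve the conjugate equations of \Cref{def:rigid} — I would verify at the end by directly computing the composed diagram. The substantial direction is that these are the \emph{only} solutions, and my plan is to analyze both conjugate equations purely combinatorially, tracking how the blocks of $r$ and $s$ recombine under composition, in the spirit of the through-block argument of \Cref{lem:inv-perm}.

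First I would unwind the left-hand side of $(r^*\otimes\id_\circ)\cdot(\id_\circ\otimes s)=\id_\circ$ as a composition of spatial partitions. The three middle columns $\circ\bullet\circ$ of the two factors get identified into columns $M_1,M_2,M_3$: the $\id_\circ$-leg of $\id_\circ\otimes s$ attaches the $j$-th upper point of the composite directly to the point $M_{1,j}$ in the first ($\circ$) column of $r$; the $\id_\circ$-leg of $r^*\otimes\id_\circ$ attaches the $j$-th lower point directly to the point $M_{3,j}$ in the second ($\circ$) column of $s$; and the middle ($\bullet$) column $M_2$ glues the second column of $r$ to the first column of $s$. Two observations then follow from this picture. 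If some block of $r$ contained two points $M_{1,j},M_{1,k}$ with $j\neq k$ of its first column, the $j$-th and $k$-th upper points of the composite would be forced into a common block, contradicting $\id_\circ$; and if some block of $r$ consisted of a single first-column point $M_{1,j}$, the $j$-th upper point would become a singleton block of the composite, again contradicting $\id_\circ$. Hence every block of $r$ meets its first column in exactly one point, and the analogous statement for the second ($\circ$) column of $s$ follows by the same argument applied to the lower points through $M_3$.

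Running the identical analysis on the second conjugate equation $(s^*\otimes\id_\bullet)\cdot(\id_\bullet\otimes r)=\id_\bullet$ controls the remaining columns: it forces every block of $r$ to meet its second column in exactly one point and every block of $s$ to meet its first column in exactly one point. Combining the four constraints, every block of $r$ contains exactly one point of each of its two columns, so $r$ is a pair partition realizing a level-by-level bijection between the columns; that is, $r=\pi_{\circ\bullet}$ for some $\pi\in S_m$, and likewise $s=\rho_{\bullet\circ}$ for some $\rho\in S_m$. Finally, I would trace the unique path through the glued middle layer: with $r=\pi_{\circ\bullet}$ and $s=\rho_{\bullet\circ}$ the composite joins the $j$-th upper point through $M_{1,j}\mapsto M_{2,\pi(j)}\mapsto M_{3,\rho(\pi(j))}$ to the $\rho(\pi(j))$-th lower point, so equality with $\id_\circ$ forces $\rho\circ\pi=\id$, i.e.\ $\rho=\pi^{-1}$. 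Setting $\sigma:=\pi$ yields the claimed form, and the same path computation run backwards gives the converse.

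The main obstacle is bookkeeping rather than conceptual depth: one must keep straight which of the three $\circ\bullet\circ$ middle columns of each factor are identified, and extract each of the four constraints from the correct conjugate equation — the first equation governs the first column of $r$ together with the second column of $s$, while the second equation governs the second column of $r$ together with the first column of $s$. Once the gluing is set up correctly, each individual constraint is an immediate consequence of the fact that $\id_\circ$ and $\id_\bullet$ have no singleton blocks and join the $j$-th upper point only to the $j$-th lower point.
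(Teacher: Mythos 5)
Your proof is correct, but it takes a genuinely different route from the paper's. The paper first rotates the duality partitions into one-column partitions $r' \in \PP^{(m)}(\circ,\bullet)$ and $s' \in \PP^{(m)}(\bullet,\circ)$ by moving the lower points of the first column up to the upper layer; under this rotation the two conjugate equations become exactly the statement that $r'$ and $s'$ are mutually inverse partitions, and \Cref{lem:inv-perm} (whose proof runs through the through-block count $t(qp)\le\min(t(q),t(p))$) immediately forces $r'=\sigma^\circ_\bullet$ and $s'={(\sigma^{-1})}^\bullet_\circ$; rotating back gives the claim. You instead analyze the compositions directly, extracting from each snake equation local constraints on how the blocks of $r$ and $s$ meet each column and then combining the four constraints; your identification of the glued columns $M_1,M_2,M_3$ and of which equation controls which column is accurate, and your converse verification by path-tracing coincides with the paper's. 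The trade-off is that the paper's argument is shorter because it reuses an existing lemma, while yours is self-contained and needs no through-block machinery, at the price of more bookkeeping.

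One small imprecision, which does not break the argument: from the first equation you only establish that no block of $r$ contains two first-column points and that no first-column point forms a singleton block; the stated intermediate conclusion that ``every block of $r$ meets its first column in exactly one point'' does not yet follow, since a block lying entirely in the second column is not excluded at that stage. This is easily repaired: either note that the $m$ first-column points then lie in $m$ distinct blocks, each containing at least one of the $m$ second-column points, so by counting these blocks already form a perfect matching between the columns; or observe that your final combination only needs the weaker statements, since a block of $r$ avoiding the first column would contain a second-column point, and the constraint you derive from the second conjugate equation then forces that block to contain a first-column point after all, a contradiction.
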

\begin{proof}
Fix a number of levels $m$ and assume $r \in \PP^{(m)}(1, \circ\bullet)$ and $s \in \PP^{(m)}(1, \bullet\circ)$ 
are a pair of duality partitions.
Define the new partitions $r' \in \PP^{(m)}(\circ, \bullet)$ and $s' \in P^{(m)}(\bullet, \circ)$ by moving 
the lower points $(1, i)$ for $1 \leq i \leq m$ to the upper layer. 
Then the conjugate equations (visualized only for one level) yield
\[
  \partition[2d]{
    \line{0}{0}{0}{0}{1}{0}
    \line{0}{1}{0}{0}{2}{0}
    \point{0}{0}{0}{white}
    \point{0}{1}{0}{black}
    \point{0}{2}{0}{white}
    \label{-0.75}{0.5}{0}{\scalebox{3}{$s'$}}
    \label{-0.75}{1.5}{0}{\scalebox{3}{$r'$}}
  }
  \ = \
  {\partSnakeW}
  \ = \
  {\partId},
  \qquad 
  \partition[2d]{
    \line{0}{0}{0}{0}{1}{0}
    \line{0}{1}{0}{0}{2}{0}
    \point{0}{0}{0}{black}
    \point{0}{1}{0}{white}
    \point{0}{2}{0}{black}
    \label{-0.75}{0.5}{0}{\scalebox{3}{$r'$}}
    \label{-0.75}{1.5}{0}{\scalebox{3}{$s'$}}.
  }
  \ = \
  {\partSnakeB}
    \ = \
  {\partIdB}.
\]
Thus, $r'$ and $s'$ are a pair of inverse partitions and \Cref{lem:inv-perm} implies that 
they are the form $r' = \sigma^\circ_\bullet$ and $s' = {(\sigma^{-1})}^\bullet_\circ$ 
for a permutation $\sigma \in S_m$.
Hence, we obtain $r = \sigma_{\circ\bullet}$ and $s = \sigma^{-1}_{\bullet\circ}$ 
by moving the upper points $(1, i)$ for $1 \leq i \leq m$ back to the lower layer.

Conversely, let $\sigma \in S_m$ and define the partition
$r = \sigma_{\circ\bullet}$ and $s = \sigma^{-1}_{\bullet\circ}$. By tracing the blocks 
in the first conjugate equation, one checks that each point $(1, i)$ in $r$ 
is connected to $(2, \sigma(i))$, which is again connected to 
$(2, \sigma^{-1}(\sigma(i))) = (2, i)$ in $s$. Thus, the first conjugate equation
\[
  (r^* \otimes \id_\circ) \cdot (\id_\circ \otimes s) = \id_\circ
\]
is satisfied. Similarly, one checks that the second conjugate equation is satisfied, which shows that 
$r$ and $s$ are duality partitions.
\end{proof}

So far, we have only considered categories of spatial partition $\CC$ as purely combinatorial
objects. However, we can also give them the structure of a category in the usual sense 
by defining the set of objects $\Ob(\CC) := \colors$ and the set of morphisms
\[
  \Hom(x, y) := \CC(x, y) = \CC \cap \PP^{(m)}(x, y)
  \qquad
  \forall x,y \in \colors.
\]
Moreover, using the language of categories, the tensor product and involution give categories of spatial partitions 
the structure of strict monoidal $\dagger$-categories. See~\cite{etingof16} for precise definitions of these terms.

Using this framework, we can now show that the duality of the objects
$\circ$ and $\bullet$ in the sense of \Cref{def:rigid} implies the duality 
of $x$ and $\overline{x}$ for all objects $x \in \colors$.
Thus, our notation of rigidity agrees with the general notation
of rigidity as for example defined in~\cite{etingof16} and~\cite{neshveyev13}.

\begin{proposition}\label{prop:existance-duals}
  Let $\CC \subseteq \PP^{(m)}$ be a rigid category of spatial partitions and $x \in \colors$. 
  Then $\CC$ contains a pair of duality partitions
  $r \in \CC(1, x\overline{x})$ and $s \in \CC(1, \overline{x}x)$ satisfying
  the conjugate equations for $x$ and $\overline{x}$, i.e.\
  \[
    (r^* \otimes \id_x) \cdot (\id_x \otimes s) = \id_x,
    \qquad 
    (s^* \otimes \id_{\overline{x}}) \cdot (\id_{\overline{x}} \otimes r) = \id_{\overline{x}}.
  \]
\end{proposition}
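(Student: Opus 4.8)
The plan is to prove this by induction on the length $\abs{x}$ of the word $x \in \colors$, building duality partitions for longer words from those for shorter words using the tensor product and composition operations available in any category of spatial partitions. The base cases are $x = 1$, where we take $r = s = \id_1$ (the empty partition), and $x = \circ$ or $x = \bullet$, which are handled directly by the rigidity hypothesis in \Cref{def:rigid}: for $x = \circ$ we use the given $r \in \CC(1, \circ\bullet)$ and $s \in \CC(1, \bullet\circ)$, and for $x = \bullet$ we swap their roles, noting that $\overline{\circ} = \bullet$ and $\overline{\bullet} = \circ$ and that the second conjugate equation in \Cref{def:rigid} is exactly the first equation for $x = \bullet$.

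For the inductive step, suppose $x = y c$ where $c \in \{\circ, \bullet\}$ and $\abs{y} < \abs{x}$, so that $\overline{x} = \overline{c}\,\overline{y}$. By the induction hypothesis we have duality partitions $r_y \in \CC(1, y\overline{y})$, $s_y \in \CC(1, \overline{y}y)$ for $y$, and by the base case we have duality partitions $r_c \in \CC(1, c\overline{c})$, $s_c \in \CC(1, \overline{c}c)$ for the single color $c$. The standard categorical recipe for the dual of a tensor product then produces the required morphisms for $x$: I would set
\[
  r = (\id_y \otimes r_c \otimes \id_{\overline{y}}) \cdot r_y \in \CC(1, y c \,\overline{c}\,\overline{y}) = \CC(1, x\overline{x}),
\]
and symmetrically
\[
  s = (\id_{\overline{c}} \otimes s_y \otimes \id_c) \cdot s_c \in \CC(1, \overline{c}\,\overline{y} y c) = \CC(1, \overline{x}x).
\]
These lie in $\CC$ because categories of spatial partitions are closed under tensor products and compositions and contain $\id_\circ, \id_\bullet$ (hence $\id_z$ for every $z \in \colors$, being tensor products of these).

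The remaining task is to verify the two conjugate equations for this $r$ and $s$. This is the standard snake-identity computation showing that duals of tensor products are given by the nested formula above, and it reduces — after expanding the tensor products and compositions — to repeated application of the two conjugate equations for $y$ and for $c$ supplied by the induction hypothesis and base case. I expect this bookkeeping to be the main obstacle: one must carefully track which identity strands are threaded through which duality partitions so that the inner $c$-snake and the outer $y$-snake each straighten independently, leaving a single identity strand. Since \Cref{prop:conj-eq-solutions} already tells us the one-level duality partitions are permutation partitions of the form $\sigma_{\circ\bullet}$, one could alternatively give a purely combinatorial proof by tracing blocks directly, but the inductive tensor-product argument is cleaner and matches the general categorical fact that rigidity on generating objects extends to all objects, so that the notion of rigidity in \Cref{def:rigid} coincides with the usual categorical rigidity of \cite{etingof16, neshveyev13}.
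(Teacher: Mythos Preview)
Your proposal is correct and essentially the same as the paper's approach: the paper simply cites \cite[Proposition 1.10.7]{etingof16} applied inductively and remarks that the duality partitions can alternatively be constructed explicitly by nesting those for $\circ$ and $\bullet$, which is precisely the construction you have written out in detail.
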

\begin{proof}
The statement follows directly by inductively applying~\cite[Proposition 1.10.7]{etingof16} in the context 
of spatial partitions. Alternatively, the duality partitions $r$ and $s$ can be constructed explicitly
by nesting the duality partitions for $\circ$ and $\bullet$.
\end{proof}

\subsection{Spatial partition functors}\label{sec:spatial-functor}

Next, we introduce functors between categories of spatial partitions.
These allow us to transform spatial partitions while preserving 
their basic operations and will be used in the proofs our main results 
in \Cref{sec:spatial-partition-QGs} and \Cref{sec:proj-version}.

\begin{definition}\label{def:functor}
Let $\CC \subseteq \PP^{(n)}$ and $\DD \subseteq \PP^{(m)}$ be categories of spatial partitions. 
A \textit{spatial partition functor} $F \colon \CC \to \DD$ is 
given by a function $F \colon \colors \to \colors$ and functions
$F \colon \CC(x, y) \to \DD(F(x), F(y))$ for all $x, y \in \colors$ 
satisfying the following properties:
\begin{enumerate}
\item $F(1) = 1$ and $F(xy) = F(x)F(y)$ for all $x, y \in \colors$
\item $F$ preserves the category operators, i.e.\ 
\[
  F(p \cdot q) = F(p) \cdot F(q), \quad F(p^*) = F(p)^*, \quad F(p \otimes q) = F(p) \otimes F(q)
\]
for all (composable) $p, q \in \CC$,
\item $F(\id_{x}) = \id_{F(x)}$ for all $x \in \colors$.
\end{enumerate}
\end{definition}

Note that a functor does not necessarily preserve the number of levels 
of spatial partitions. Using the language of categories, 
we also introduce the notion of fully faithful functors.

\begin{definition}
Let $F \colon \CC \to \DD$ be a spatial partition functor. Then 
$F$ is called \textit{fully faithful} if the functions 
$F \colon \CC(x, y) \to \DD(F(x), F(y))$ are bijections
for all $x, y \in \colors$.
\end{definition}

A simple example of a spatial partition functor is obtained by decoloring spatial 
partitions. Formally, we define $F \colon \PP^{(m)} \to \PP^{(m)}$ by
\begin{alignat*}{3}
  F(x) &:= \circ^{\abs{x}} &\qquad& \forall x \in \colors, \\
  F(p) &:= (\circ^{\abs{x}}, \circ^{\abs{y}}, \{B_i\}) &\qquad& \forall p := (x, y, \{B_i\}) \in \PP^{(m)}.
\end{alignat*}

Since $F$ preserves the block structure of spatial partitions, one can check directly 
that it satisfies all properties in \Cref{def:functor} and that $F$ is fully faithful.
However, $F$ is not surjective in the sense that its image does not contain ${\partIdB}^{(m)}$.
In particular, this shows that the image of a spatial partition functor is not necessarily again a category 
of spatial partitions.

Next, we present two additional examples of spatial partition functors that  
will be used again in the context of spatial partition quantum groups in \Cref{sec:spatial-partition-QGs} and \Cref{sec:proj-version}.

\subsubsection{Permuting levels}\label{sec:functor-perm}

Let $\sigma, \tau \in S_m$ be permutations. Then we obtain a 
functor $\Perm_{\sigma,\tau} \colon \PP^{(m)} \to \PP^{(m)}$ by permuting the
levels of white points using $\sigma$ and the levels of black points using $\tau$. 
Consider for example $\sigma = (12)$ and $\tau = (1)(2)$. Then
\[
  \partition[3d]{%
      \line{0}{0}{0}{0}{1}{0}
      \line{0}{0}{1}{0}{1}{1}
      \line{1}{0}{0}{1}{1}{0}
      \line{1}{0}{1}{1}{1}{1}
      \line{2}{0}{0}{2}{0.35}{0}
      \line{2}{0}{1}{2}{0.35}{1}
      \line{1}{0.35}{1}{2}{0.35}{1}
      \point{0}{1}{0}{black}
      \point{0}{1}{1}{black}
      \point{1}{1}{0}{white}
      \point{1}{1}{1}{white}
      \point{0}{0}{0}{white}
      \point{0}{0}{1}{white}
      \point{1}{0}{0}{white}
      \point{1}{0}{1}{white}
      \point{2}{0}{0}{black}
      \point{2}{0}{1}{black}
  }%
  \qquad \overset{\Perm_{\sigma,\tau}}{\xrightarrow{\hspace{3em}}} \qquad
  \partition[3d]{%
      \line{0}{0}{0}{0}{1}{1}
      \line{0}{0}{1}{0}{1}{0}
      \line{1}{0}{0}{1}{1}{0}
      \line{1}{0}{1}{1}{1}{1}
      \line{2}{0}{0}{2}{0.35}{0}
      \line{2}{0}{1}{2}{0.35}{1}
      \point{0}{1}{0}{black}
      \point{0}{1}{1}{black}
      \point{1}{1}{0}{white}
      \point{1}{1}{1}{white}
      \point{0}{0}{0}{white}
      \point{0}{0}{1}{white}
      \point{1}{0}{0}{white}
      \point{1}{0}{1}{white}
      \point{2}{0}{0}{black}
      \point{2}{0}{1}{black}
      \line{1}{0.35}{0}{2}{0.35}{1}
  }.%
\]
Using the permutation partitions introduced in \Cref{def:special-partitions}, we can define $\Perm_{\sigma,\tau}$
as follows.

\begin{definition}\label{def:functor-perm}
Let $\sigma, \tau \in S_m$ and define
\[
  q_{\sigma,\tau}^\circ   := \sigma^\circ_\circ, \qquad
  q_{\sigma,\tau}^\bullet := \tau^\bullet_\bullet, \qquad
  q_{\sigma,\tau}^{xy}    := q_{\sigma,\tau}^{x} \otimes q_{\sigma,\tau}^{y} \quad \forall x, y \in \colors, \, \abs{xy} > 1.
\]
Then the \textit{permutation functor}
$\Perm_{\sigma,\tau} \colon \PP^{(m)} \to \PP^{(m)}$
is given by 
\begin{alignat*}{3}
  \Perm_{\sigma,\tau}(x) &:= x &\qquad& \forall x \in \colors, \\
  \Perm_{\sigma,\tau}(p) &:= q_{\sigma,\tau}^y \cdot p \cdot {(q_{\sigma,\tau}^x)}^{-1} &\qquad& \forall p \in \PP^{(m)}(x, y).
\end{alignat*}
\end{definition}

\begin{example}
Consider again the spatial partition
from the initial example with $\sigma := (12)$ and $\tau = (1)(2)$. Then 
\[
  q_{\sigma,\tau}^{{\bullet}{\circ}} = 
  {\partition[3d]{
    \line{0}{0}{0}{0}{1}{0}
    \line{0}{0}{1}{0}{1}{1}
    \line{1}{0}{0}{1}{1}{1}
    \line{1}{0}{1}{1}{1}{0}
    \point{0}{0}{0}{black}
    \point{0}{0}{1}{black}
    \point{1}{0}{0}{white}
    \point{1}{0}{1}{white}
    \point{0}{1}{0}{black}
    \point{0}{1}{1}{black}
    \point{1}{1}{0}{white}
    \point{1}{1}{1}{white}
  }},%
  \qquad 
  q_{\sigma,\tau}^{{\circ}{\circ}{\bullet}} = 
  {\partition[3d]{
    \line{0}{0}{0}{0}{1}{1}
    \line{0}{0}{1}{0}{1}{0}
    \line{1}{0}{0}{1}{1}{1}
    \line{1}{0}{1}{1}{1}{0}
    \line{2}{0}{0}{2}{1}{0}
    \line{2}{0}{1}{2}{1}{1}
    \point{0}{0}{0}{white}
    \point{0}{0}{1}{white}
    \point{1}{0}{0}{white}
    \point{1}{0}{1}{white}
    \point{2}{0}{0}{black}
    \point{2}{0}{1}{black}
    \point{0}{1}{0}{white}
    \point{0}{1}{1}{white}
    \point{1}{1}{0}{white}
    \point{1}{1}{1}{white}
    \point{2}{1}{0}{black}
    \point{2}{1}{1}{black}
  }},%
\]
which yields 
\[
  {\partition[3d]{%
      \line{0}{0}{0}{0}{1}{0}
      \line{0}{0}{1}{0}{1}{1}
      \line{1}{0}{0}{1}{1}{0}
      \line{1}{0}{1}{1}{1}{1}
      \line{2}{0}{0}{2}{0.35}{0}
      \line{2}{0}{1}{2}{0.35}{1}
      \line{1}{0.35}{1}{2}{0.35}{1}
      \point{0}{1}{0}{black}
      \point{0}{1}{1}{black}
      \point{1}{1}{0}{white}
      \point{1}{1}{1}{white}
      \point{0}{0}{0}{white}
      \point{0}{0}{1}{white}
      \point{1}{0}{0}{white}
      \point{1}{0}{1}{white}
      \point{2}{0}{0}{black}
      \point{2}{0}{1}{black}
  }}%
  \qquad \overset{\Perm_{\sigma,\tau}}{\xrightarrow{\hspace{3em}}} \qquad
  {\partition[3d,aspectY=0.8]{%
      \line{0}{0}{0}{0}{1}{0}
      \line{0}{0}{1}{0}{1}{1}
      \line{1}{0}{0}{1}{1}{0}
      \line{1}{0}{1}{1}{1}{1}
      \line{2}{0}{0}{2}{0.35}{0}
      \line{2}{0}{1}{2}{0.35}{1}
      \line{1}{0.35}{1}{2}{0.35}{1}
      \line{0}{1}{0}{0}{2}{0}
      \line{0}{1}{1}{0}{2}{1}
      \line{1}{1}{0}{1}{2}{1}
      \line{1}{1}{1}{1}{2}{0}
      \line{0}{-1}{0}{0}{0}{1}
      \line{0}{-1}{1}{0}{0}{0}
      \line{1}{-1}{0}{1}{0}{1}
      \line{1}{-1}{1}{1}{0}{0}
      \line{2}{-1}{0}{2}{0}{0}
      \line{2}{-1}{1}{2}{0}{1}
      \point{0}{1}{0}{black}
      \point{0}{1}{1}{black}
      \point{1}{1}{0}{white}
      \point{1}{1}{1}{white}
      \point{0}{0}{0}{white}
      \point{0}{0}{1}{white}
      \point{1}{0}{0}{white}
      \point{1}{0}{1}{white}
      \point{2}{0}{0}{black}
      \point{2}{0}{1}{black}
      \point{0}{2}{0}{black}
      \point{0}{2}{1}{black}
      \point{1}{2}{0}{white}
      \point{1}{2}{1}{white}
      \point{0}{-1}{0}{white}
      \point{0}{-1}{1}{white}
      \point{1}{-1}{0}{white}
      \point{1}{-1}{1}{white}
      \point{2}{-1}{0}{black}
      \point{2}{-1}{1}{black}
  }}%
  \quad = \quad
  {\partition[3d]{%
      \line{0}{0}{0}{0}{1}{1}
      \line{0}{0}{1}{0}{1}{0}
      \line{1}{0}{0}{1}{1}{0}
      \line{1}{0}{1}{1}{1}{1}
      \line{2}{0}{0}{2}{0.35}{0}
      \line{2}{0}{1}{2}{0.35}{1}
      \point{0}{1}{0}{black}
      \point{0}{1}{1}{black}
      \point{1}{1}{0}{white}
      \point{1}{1}{1}{white}
      \point{0}{0}{0}{white}
      \point{0}{0}{1}{white}
      \point{1}{0}{0}{white}
      \point{1}{0}{1}{white}
      \point{2}{0}{0}{black}
      \point{2}{0}{1}{black}
      \line{1}{0.35}{0}{2}{0.35}{1}
  }}.%
\]
\end{example}

Next, we verify that the permutation functor $\Perm_{\sigma,\tau}$ is indeed a spatial partition 
functor in the sense of \Cref{def:functor} that additionally is fully faithful.

\begin{proposition}
$\Perm_{\sigma,\tau}$ is a fully faithful spatial partition functor.
\end{proposition}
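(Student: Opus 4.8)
The plan is to verify each of the defining properties of a spatial partition functor from \Cref{def:functor} for $\Perm_{\sigma,\tau}$, and then separately establish full faithfulness. The essential tool is that the permutation partitions $q_{\sigma,\tau}^x$ are invertible with inverse $q_{\sigma^{-1},\tau^{-1}}^x = (q_{\sigma,\tau}^x)^{-1}$; this follows from \Cref{lem:inv-perm} applied levelwise, together with the block description of $\sigma^x_y$ in \Cref{def:special-partitions}. In fact $\Perm_{\sigma,\tau}$ is conjugation by the family $\{q_{\sigma,\tau}^x\}_{x}$, so many properties will reduce to formal manipulations with these invertible partitions.

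First I would check the monoidal compatibility on objects, which is immediate since $\Perm_{\sigma,\tau}(x) = x$ gives $\Perm_{\sigma,\tau}(1)=1$ and $\Perm_{\sigma,\tau}(xy)=\Perm_{\sigma,\tau}(x)\Perm_{\sigma,\tau}(y)$ trivially. Next I would verify preservation of the three category operations. For composition, if $p \in \PP^{(m)}(x,y)$ and $q \in \PP^{(m)}(y,z)$, then
\[
  \Perm_{\sigma,\tau}(q)\cdot\Perm_{\sigma,\tau}(p)
  = q_{\sigma,\tau}^z\cdot q\cdot (q_{\sigma,\tau}^y)^{-1}\cdot q_{\sigma,\tau}^y\cdot p\cdot(q_{\sigma,\tau}^x)^{-1}
  = q_{\sigma,\tau}^z\cdot qp\cdot(q_{\sigma,\tau}^x)^{-1}
  = \Perm_{\sigma,\tau}(qp),
\]
where the middle factors cancel by invertibility. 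For the involution I would use that $(q_{\sigma,\tau}^x)^* = (q_{\sigma,\tau}^x)^{-1}$ (since these are permutation pair partitions, as noted after \Cref{def:rigid}), giving $\Perm_{\sigma,\tau}(p^*) = q_{\sigma,\tau}^x\cdot p^*\cdot (q_{\sigma,\tau}^y)^{-1} = \big(q_{\sigma,\tau}^y\cdot p\cdot (q_{\sigma,\tau}^x)^{-1}\big)^* = \Perm_{\sigma,\tau}(p)^*$. For the tensor product, the key point is the multiplicativity $q_{\sigma,\tau}^{xw} = q_{\sigma,\tau}^x\otimes q_{\sigma,\tau}^w$ built into the definition, which lets $q_{\sigma,\tau}^{yz}\cdot(p\otimes q)\cdot(q_{\sigma,\tau}^{xw})^{-1}$ factor as $\big(q_{\sigma,\tau}^y p (q_{\sigma,\tau}^x)^{-1}\big)\otimes\big(q_{\sigma,\tau}^z q (q_{\sigma,\tau}^w)^{-1}\big)$ using the interchange law for $\otimes$ and composition. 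Finally, $\Perm_{\sigma,\tau}(\id_x) = q_{\sigma,\tau}^x\cdot\id_x\cdot(q_{\sigma,\tau}^x)^{-1} = \id_x = \id_{\Perm_{\sigma,\tau}(x)}$.

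For full faithfulness, the inverse map is $\Perm_{\sigma^{-1},\tau^{-1}}$: since conjugating by $\{q_{\sigma,\tau}^x\}$ and then by $\{q_{\sigma^{-1},\tau^{-1}}^x\}$ returns every partition to itself, the two functors are mutually inverse bijections on each morphism space $\PP^{(m)}(x,y)$. The one genuine thing to confirm is that $\Perm_{\sigma,\tau}$ actually lands inside $\PP^{(m)}$ — i.e. that conjugation by $q_{\sigma,\tau}^x$ produces a well-defined spatial partition rather than something with extra loops — but this is automatic because the $q_{\sigma,\tau}^x$ are through-block-preserving permutation partitions, so composition introduces no closed loops and merely relabels levels. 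The main obstacle, such as it is, will be bookkeeping in the tensor-product step: one must track that the multiplicative definition $q_{\sigma,\tau}^{xy} = q_{\sigma,\tau}^x\otimes q_{\sigma,\tau}^y$ is exactly what makes the interchange law applicable, and verify the base cases $q_{\sigma,\tau}^\circ$, $q_{\sigma,\tau}^\bullet$ behave correctly under $\otimes$. Once invertibility of the $q_{\sigma,\tau}^x$ is in hand, every remaining step is a short formal cancellation.
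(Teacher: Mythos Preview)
Your proof is correct and follows essentially the same approach as the paper: both verify the functor axioms by exploiting the conjugation structure $p \mapsto q_{\sigma,\tau}^y \cdot p \cdot (q_{\sigma,\tau}^x)^{-1}$ together with the key fact $(q_{\sigma,\tau}^x)^* = (q_{\sigma,\tau}^x)^{-1}$, and both establish full faithfulness by exhibiting $\Perm_{\sigma^{-1},\tau^{-1}}$ as the inverse. Your write-up is in fact slightly more explicit than the paper's (which abbreviates the composition and tensor product checks), and your remark about closed loops is a worthwhile sanity check that the paper leaves implicit.
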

\begin{proof}
Since $\Perm_{\sigma,\tau}$ preserves colors, it follows immediately that 
it respects the concatenation of colors. 
Further, \Cref{lem:inv-perm} implies $q_{\sigma,\tau}^* = q_{\sigma,\tau}^{-1}$, which yields
\[
  \Perm_{\sigma,\tau}(p)^* = {({(q_{\sigma,\tau}^x)}^{-1})}^* \cdot p^* \cdot {(q_{\sigma,\tau}^y)}^* = \Perm_{\sigma,\tau}(p^*) \qquad \forall p \in \PP^{(m)}(x, y).
\]
Similarly, one verifies 
directly that
\begin{align*}
  \Perm_{\sigma,\tau}(p_1 \cdot p_2) 
  &= \Perm_{\sigma,\tau}(p_1) \cdot \Perm_{\sigma,\tau}(p_2), \\
  \Perm_{\sigma,\tau}(p_1 \otimes p_2) 
  &= \Perm_{\sigma,\tau}(p_1) \otimes \Perm_{\sigma,\tau}(p_2)
\end{align*}
for all (composable) $p_1, p_2 \in \PP^{(m)}$.
Hence, $\Perm_{\sigma,\tau}$ respects the category operations.
Additionally, we have 
\[
  \Perm_{\sigma,\tau}(\id_x) 
  = q_{\sigma,\tau}^{x} \cdot \id_x \cdot {(q_{\sigma,\tau}^{x})}^{-1} 
  = \id_x 
  \quad 
  \forall x \in \colors,
\]
which shows that $\Perm_{\sigma,\tau}$ is spatial partition functor.
Finally, one checks that 
\[
  \Perm_{\sigma,\tau}(\Perm_{\sigma^{-1},\tau^{-1}}(p)) = \Perm_{\sigma^{-1},\tau^{-1}}(\Perm_{\sigma,\tau}(p)) = p
  \qquad 
  \forall p \in \PP^{(m)},
\]
which implies that $\Perm_{\sigma,\tau}$ is bijective on the sets $\PP^{(m)}(x, y)$.
Thus, it is fully faithful.
\end{proof}

Finally, we introduce $\nn$-graded permutations
and show that $\Perm_{\sigma,\tau}$ maps categories
of spatial partitions to categories of spatial partitions while 
preserving rigidity.

\begin{definition}
Let $\nn := (n_1, \dots, n_m) \in \N^m$. A permutation $\sigma \in S_m$ is called \textit{$\nn$-graded} if $n_i = n_{\sigma(i)}$ for all 
$1 \leq i \leq m$.
\end{definition}

\begin{proposition}
Let $\sigma, \tau \in S_m$ be $\nn$-graded permutations and 
$\CC \subseteq \PP^{(m)}$ be a $\nn$-graded rigid category of spatial partitions.
Then $\Perm_{\sigma,\tau}(\CC)$ is again a $\nn$-graded rigid category of spatial partitions. 
\end{proposition}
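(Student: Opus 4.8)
The plan is to establish the three defining properties of an $\nn$-graded rigid category separately, observing that closure into a category and rigidity follow almost formally from $\Perm_{\sigma,\tau}$ being a fully faithful spatial partition functor, while the grading is where the $\nn$-graded hypotheses on $\sigma$ and $\tau$ actually enter.

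First I would check that $\Perm_{\sigma,\tau}(\CC)$ is a category of spatial partitions. Since $\Perm_{\sigma,\tau}$ fixes colors we have $\Perm_{\sigma,\tau}(\id_\circ) = \id_\circ$ and $\Perm_{\sigma,\tau}(\id_\bullet) = \id_\bullet$, so the required identity partitions lie in the image. Closure under the category operations is immediate from the functor properties: given $\Perm_{\sigma,\tau}(p), \Perm_{\sigma,\tau}(q) \in \Perm_{\sigma,\tau}(\CC)$, whenever the relevant operation is defined we have $\Perm_{\sigma,\tau}(p) \cdot \Perm_{\sigma,\tau}(q) = \Perm_{\sigma,\tau}(p \cdot q)$, $\Perm_{\sigma,\tau}(p)^* = \Perm_{\sigma,\tau}(p^*)$ and $\Perm_{\sigma,\tau}(p) \otimes \Perm_{\sigma,\tau}(q) = \Perm_{\sigma,\tau}(p \otimes q)$, and each right-hand side lies in $\Perm_{\sigma,\tau}(\CC)$ because $\CC$ is itself closed under these operations.

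For rigidity I would transport the duality partitions. By assumption $\CC$ contains $r \in \CC(1,\circ\bullet)$ and $s \in \CC(1,\bullet\circ)$ satisfying the conjugate equations, and since $\Perm_{\sigma,\tau}$ preserves colors their images lie in $\Perm_{\sigma,\tau}(\CC)(1,\circ\bullet)$ and $\Perm_{\sigma,\tau}(\CC)(1,\bullet\circ)$ respectively. Applying $\Perm_{\sigma,\tau}$ to the first conjugate equation and using that it preserves $\otimes$, composition, involution and identities yields
\[
  (\Perm_{\sigma,\tau}(r)^* \otimes \id_\circ) \cdot (\id_\circ \otimes \Perm_{\sigma,\tau}(s)) = \Perm_{\sigma,\tau}(\id_\circ) = \id_\circ,
\]
and the second equation transforms in exactly the same way; hence $\Perm_{\sigma,\tau}(r)$ and $\Perm_{\sigma,\tau}(s)$ are duality partitions and $\Perm_{\sigma,\tau}(\CC)$ is rigid.

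The heart of the argument is $\nn$-gradedness, and this is the step I expect to require the most care. I would first record that, because the permutation partitions $q^x_{\sigma,\tau}$ are invertible by \Cref{lem:inv-perm}, composing $p$ with $(q^x_{\sigma,\tau})^{-1}$ on top and $q^y_{\sigma,\tau}$ below leaves the block structure of $p$ intact and merely relabels the level of each point: a point in a white column at level $k$ is moved to level $\sigma(k)$ and a point in a black column at level $k$ to level $\tau(k)$ (the precise direction being irrelevant, as $\sigma^{-1}, \tau^{-1}$ are $\nn$-graded whenever $\sigma, \tau$ are). Now let $p \in \CC$ be $\nn$-graded and consider a block of $\Perm_{\sigma,\tau}(p)$ arising from a block $B$ of $p$ on which all points share a common value $n_k = N$. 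Since $\sigma$ and $\tau$ are $\nn$-graded we have $n_{\sigma(k)} = n_k$ and $n_{\tau(k)} = n_k$ for every $k$, so each relabeled level still carries the value $N$ no matter the colour of its column; thus the block remains $\nn$-graded and so does $\Perm_{\sigma,\tau}(p)$. As every element of $\Perm_{\sigma,\tau}(\CC)$ is of this form, the whole category is $\nn$-graded. The only delicate point is the level-relabeling claim, which rests on the invertibility of the $q$-partitions; once it is in place, the grading identities $n_{\sigma(k)} = n_{\tau(k)} = n_k$ make the conclusion immediate.
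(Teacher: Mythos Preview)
Your proof is correct and follows essentially the same approach as the paper: closure and rigidity come formally from $\Perm_{\sigma,\tau}$ being a spatial partition functor, and $\nn$-gradedness comes from the fact that $\nn$-graded permutations send each level $k$ to a level with the same value $n_k$. The paper dispatches the $\nn$-graded step in a single sentence (``permuting the levels of a $\nn$-graded spatial partition by a $\nn$-graded permutation yields again a $\nn$-graded spatial partition''), whereas you spell out the underlying block-relabeling argument explicitly; this extra care is harmless and the logic is the same.
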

\begin{proof}
Define $\DD := \Perm_{\sigma,\tau}(\CC)$. Since $\Perm_{\sigma,\tau}$ respects the category
operations, it follows directly that $\DD$ is closed under compositions, tensor products and involutions.
Further, $\Perm_{\sigma,\tau}(\id_x) = \id_x \in \DD$ for all $x \in \colors$, which shows that $\DD$ is 
a category of spatial partitions. Since $\CC$ rigid, there exists a pair 
of duality partitions $r \in \CC(1, \circ\bullet)$ and $s \in \CC(1, \bullet\circ)$
satisfying the conjugate equations. Then $\Perm_{\sigma,\tau}(r) \in \DD(1, \circ\bullet)$
and $\Perm_{\sigma,\tau}(s) \in \DD(1, \bullet\circ)$ also satisfy the conjugate equations, which implies 
that $\DD$ is again rigid. Finally, observe that permuting the levels of a $\nn$-graded
spatial partition by a $\nn$-graded permutation yields again a $\nn$-graded spatial partition.
Thus, $\DD$ is also a $\nn$-graded category of spatial partitions.
\end{proof}

\subsubsection{Flattening partitions}\label{sec:functor-flat}

Next, we introduce the functor $\Flat_{m,z}$ that flattens spatial partitions along the levels and is used 
in \Cref{sec:proj-version} in the context of projective versions of spatial partition quantum groups.
Its definition is motivated by \cite[Remark 2.4\ \& 2.8]{cebron16},
where Cébron-Weber consider the bijection
\[
   \{ 1, \ldots k + \ell \} \times \{1, \ldots m\} \cong \{ 1, \ldots, m \cdot (k + \ell) \}
\]
that identifies a point $(i, j)$ with the point $m \cdot (i-1) + j$. This bijection induces a bijection
between the sets of spatial partitions $\PP^{(m)}(\circ^k, \circ^l)$ and $\PP^{(1)}(\circ^{mk}, \circ^{ml})$
by rearranging the points accordingly, e.g.
\[
    {\partition[3d]{
      \line{0}{0}{0}{0}{1}{0}
      \line{0}{0}{1}{0}{1}{2}
      \line{0}{0}{2}{0}{1}{1}
      \line{1}{0}{0}{1}{0.35}{0}
      \line{1}{0}{1}{1}{0.35}{1}
      \line{1}{0.35}{0}{1}{0.35}{1}
      \line{1}{0}{2}{1}{0.35}{2}
      \point{0}{0}{0}{white}
      \point{0}{0}{1}{white}
      \point{0}{0}{2}{white}
      \point{1}{0}{0}{white}
      \point{1}{0}{1}{white}
      \point{1}{0}{2}{white}
      \point{0}{1}{0}{white}
      \point{0}{1}{1}{white}
      \point{0}{1}{2}{white}
    }} 
    \, \in \PP^{(3)}(\circ^1, \circ^2) 
    \quad \cong \quad 
    {\partition[2d]{
      \line{0}{0}{0}{0}{1}{0}
      \line{1}{0}{0}{2}{1}{0}
      \line{2}{0}{0}{1}{1}{0}
      \line{3}{0}{0}{3}{0.35}{0}
      \line{4}{0}{1}{4}{0.35}{0}
      \line{3}{0.35}{0}{4}{0.35}{0}
      \line{5}{0}{0}{5}{0.35}{0}
      \point{0}{0}{0}{white}
      \point{1}{0}{0}{white}
      \point{2}{0}{0}{white}
      \point{3}{0}{0}{white}
      \point{4}{0}{0}{white}
      \point{5}{0}{0}{white}
      \point{0}{1}{0}{white}
      \point{1}{1}{0}{white}
      \point{2}{1}{0}{white}
    }} 
    \, \in \PP^{(1)}(\circ^3, \circ^6). 
\] 
It respects the composition, tensor product and involution of spatial partitions, but it fails to preserve the base partitions
as defined in~\cite{cebron16} when applied to categories of spatial partitions.

Next, we generalize the previous bijection to the case of $m$ levels and colored spatial partitions. Note that 
we reverse the order of black points in the following definition since in the context of quantum groups 
the representation $\overline{u \otop w}$ is equivalent to $\overline{w} \otop \overline{u}$. See \Cref{sec:proj-version} for further information.

\begin{definition}\label{def:bij-varphi}
Let $m, d \geq 1$
and $x, y \in \colors$ with $k := \abs{x}$ and $\ell := \abs{y}$. 
Define the bijections
\[
  \varphi_{m,d}^{x,y} \colon \{1, \dots, k + \ell \} \times \{1, \dots, m \cdot d\}
  \to \{1, \dots, (k + \ell) \cdot d \} \times \{1, \dots, m\}
\]
given by
\[
  (i, j + k \cdot m) \mapsto \begin{cases}
    (i \cdot d - d + k + 1, j) & \text{if ${(xy)}_{i} = \circ$}, \\
    (i \cdot d - k, j) & \text{if ${(xy)}_{i} = \bullet$},
  \end{cases}
\]
for all $1 \leq i \leq k + \ell$, $1 \leq j \leq m$ and $0 \leq k < d$.  
\end{definition}

In the special case $x = \circ^k$, $y = \circ^\ell$ and $m = 1$, we obtain 
exactly the bijection of Cébron-Weber from the beginning of the section.
A more general case on four levels is visualized in the following example.

\begin{example}
Consider four levels of points with upper colors $x = \circ$ and 
lower colors $y = \circ\bullet$. Then applying $\varphi := \varphi^{x,y}_{2,2}$ to these points yields
\[
  \partition[3d]{%
    \label{0}{1}{0}{{\Huge(1,1)}}
    \label{0}{1}{1}{{\Huge(1,2)}}
    \label{0}{1}{2}{{\Huge(1,3)}}
    \label{0}{1}{3}{{\Huge(1,4)}}
    \label{0}{0}{0}{{\Huge(2,1)}}
    \label{0}{0}{1}{{\Huge(2,2)}}
    \label{0}{0}{2}{{\Huge(2,3)}}
    \label{0}{0}{3}{{\Huge(2,4)}}
    \label{1}{0}{0}{{\Huge(3,1)}}
    \label{1}{0}{1}{{\Huge(3,2)}}
    \label{1}{0}{2}{{\Huge(3,3)}}
    \label{1}{0}{3}{{\Huge(3,4)}}
}%
\qquad \overset{\varphi}{\xrightarrow{\hspace{2em}}} \qquad
\partition[3d]{%
    \label{0}{1}{0}{{\Huge(1,1)}}
    \label{0}{1}{1}{{\Huge(1,2)}}
    \label{0}{1}{2}{{\Huge(2,1)}}
    \label{0}{1}{3}{{\Huge(2,2)}}
    \label{0}{0}{0}{{\Huge(3,1)}}
    \label{0}{0}{1}{{\Huge(3,2)}}
    \label{0}{0}{2}{{\Huge(4,1)}}
    \label{0}{0}{3}{{\Huge(4,2)}}
    \label{1}{0}{0}{{\Huge(6,1)}}
    \label{1}{0}{1}{{\Huge(6,2)}}
    \label{1}{0}{2}{{\Huge(5,1)}}
    \label{1}{0}{3}{{\Huge(5,2)}}
}%
\quad \cong \quad
\partition[3d]{%
    \label{0}{1}{0}{{\Huge(1,1)}}
    \label{0}{1}{1}{{\Huge(1,2)}}
    \label{1}{1}{0}{{\Huge(2,1)}}
    \label{1}{1}{1}{{\Huge(2,2)}}
    \label{0}{0}{0}{{\Huge(3,1)}}
    \label{0}{0}{1}{{\Huge(3,2)}}
    \label{1}{0}{0}{{\Huge(4,1)}}
    \label{1}{0}{1}{{\Huge(4,2)}}
    \label{2}{0}{0}{{\Huge(5,1)}}
    \label{2}{0}{1}{{\Huge(5,2)}}
    \label{3}{0}{0}{{\Huge(6,1)}}
    \label{3}{0}{1}{{\Huge(6,2)}},
} 
\]
where the relabeled points are again rearranged according to our usual convention.
\end{example}

By extending this bijection to the blocks of spatial partitions, we obtain for example
\[
  \partition[3d]{%
    \line{0}{0}{0}{0}{1}{0}
    \line{0}{0}{1}{0}{1}{1}
    \line{0}{0}{2}{0}{1}{3}
    \line{0}{0}{3}{0}{1}{2}
    \line{1}{0}{0}{1}{0.35}{0}
    \line{1}{0}{1}{1}{0.35}{1}
    \line{1}{0.35}{0}{1}{0.35}{1}
    \line{1}{0}{2}{1}{0.35}{2}
    \line{1}{0}{3}{1}{0.35}{3}
    \point{0}{1}{0}{white}
    \point{0}{1}{1}{white}
    \point{0}{1}{2}{white}
    \point{0}{1}{3}{white}
    \point{0}{0}{0}{white}
    \point{0}{0}{1}{white}
    \point{0}{0}{2}{white}
    \point{0}{0}{3}{white}
    \point{1}{0}{0}{black}
    \point{1}{0}{1}{black}
    \point{1}{0}{2}{black}
    \point{1}{0}{3}{black}
}%
\qquad \overset{\varphi}{\xrightarrow{\hspace{2em}}} \qquad
\partition[3d]{%
    \line{0}{0}{0}{0}{1}{0}
    \line{0}{0}{1}{0}{1}{1}
    \line{1}{0}{0}{1}{1}{1}
    \line{1}{0}{1}{1}{1}{0}
    \line{3}{0}{0}{3}{0.35}{0}
    \line{3}{0}{1}{3}{0.35}{1}
    \line{3}{0.35}{0}{3}{0.35}{1}
    \line{2}{0}{0}{2}{0.35}{0}
    \line{2}{0}{1}{2}{0.35}{1}
    \point{0}{1}{0}{white}
    \point{0}{1}{1}{white}
    \point{1}{1}{0}{white}
    \point{1}{1}{1}{white}
    \point{0}{0}{0}{white}
    \point{0}{0}{1}{white}
    \point{1}{0}{0}{white}
    \point{1}{0}{1}{white}
    \point{2}{0}{0}{black}
    \point{2}{0}{1}{black}
    \point{3}{0}{0}{black}
    \point{3}{0}{1}{black}
}. 
\]
In particular, we observe that black points are flattened in reverse order.
Using these bijections $\varphi^{x,y}_{m,d}$ from \Cref{def:bij-varphi}, we can now define the 
functor $\Flat_{m,z}$.

\begin{definition}
Let $m \in \N$ and $z \in \colors$ with $d := \abs{z} > 1$.
Define the spatial partition functor $\Flat_{m,z} \colon \PP^{(m \cdot d)} \to \PP^{(m)}$ by 
\[
  \Flat_{m,z}(\circ) := z, \quad 
  \Flat_{m,z}(\bullet) := \overline{z} = \overline{z_d} \cdots \overline{z_1}
\]
and 
\[
\Flat_{m,z}(p) := \big(\Flat_{m,z}(x), \Flat_{m,z}(y), \{\varphi^{x,y}_{m,d}(B_i)\} \big)
\]
for all spatial partitions $p := (x, y, \{B_i\}) \in \PP^{(m \cdot d)}$.
\end{definition}

If $z = \circ \cdots \circ$, then $\Flat_{m,z}$ preserves colors and only applies the permutations
$\varphi^{x,y}_{m,\abs{z}}$ as in the previous example. However, in the general case, we obtain for example
\[
  \partition[3d]{%
    \line{0}{0}{0}{0}{1}{0}
    \line{0}{0}{1}{0}{1}{1}
    \line{0}{0}{2}{0}{1}{3}
    \line{0}{0}{3}{0}{1}{2}
    \line{1}{0}{0}{1}{0.35}{0}
    \line{1}{0}{1}{1}{0.35}{1}
    \line{1}{0.35}{0}{1}{0.35}{1}
    \line{1}{0}{2}{1}{0.35}{2}
    \line{1}{0}{3}{1}{0.35}{3}
    \point{0}{1}{0}{white}
    \point{0}{1}{1}{white}
    \point{0}{1}{2}{white}
    \point{0}{1}{3}{white}
    \point{0}{0}{0}{white}
    \point{0}{0}{1}{white}
    \point{0}{0}{2}{white}
    \point{0}{0}{3}{white}
    \point{1}{0}{0}{black}
    \point{1}{0}{1}{black}
    \point{1}{0}{2}{black}
    \point{1}{0}{3}{black}
}%
\qquad \overset{\Flat_{2,\circ\bullet}}{\xrightarrow{\hspace{3em}}} \qquad
\partition[3d]{%
    \line{0}{0}{0}{0}{1}{0}
    \line{0}{0}{1}{0}{1}{1}
    \line{1}{0}{0}{1}{1}{1}
    \line{1}{0}{1}{1}{1}{0}
    \line{3}{0}{0}{3}{0.35}{0}
    \line{3}{0}{1}{3}{0.35}{1}
    \line{3}{0.35}{0}{3}{0.35}{1}
    \line{2}{0}{0}{2}{0.35}{0}
    \line{2}{0}{1}{2}{0.35}{1}
    \point{0}{1}{0}{white}
    \point{0}{1}{1}{white}
    \point{1}{1}{0}{black}
    \point{1}{1}{1}{black}
    \point{0}{0}{0}{white}
    \point{0}{0}{1}{white}
    \point{1}{0}{0}{black}
    \point{1}{0}{1}{black}
    \point{2}{0}{0}{black}
    \point{2}{0}{1}{black}
    \point{3}{0}{0}{white}
    \point{3}{0}{1}{white},
},
\]

\[
  {\partition[3d]{
      \line{0}{0}{0}{0}{1}{0}
      \line{0}{0}{1}{0}{1}{2}
      \line{0}{0}{2}{0}{1}{1}
      \line{1}{0}{0}{1}{0.35}{0}
      \line{1}{0}{1}{1}{0.35}{1}
      \line{1}{0.35}{0}{1}{0.35}{1}
      \line{1}{0}{2}{1}{0.35}{2}
      \point{0}{0}{0}{white}
      \point{0}{0}{1}{white}
      \point{0}{0}{2}{white}
      \point{1}{0}{0}{white}
      \point{1}{0}{1}{white}
      \point{1}{0}{2}{white}
      \point{0}{1}{0}{black}
      \point{0}{1}{1}{black}
      \point{0}{1}{2}{black}
    }} 
\qquad
\overset{\Flat_{1,{\circ}{\circ}{\bullet}}}{\xrightarrow{\hspace{3em}}}
\qquad
{\partition[2d]{
  \line{0}{0}{0}{2}{1}{0}
  \line{1}{0}{0}{0}{1}{0}
  \line{2}{0}{0}{1}{1}{0}
  \line{3}{0}{0}{3}{0.35}{0}
  \line{4}{0}{1}{4}{0.35}{0}
  \line{3}{0.35}{0}{4}{0.35}{0}
  \line{5}{0}{0}{5}{0.35}{0}
  \point{0}{0}{0}{white}
  \point{1}{0}{0}{white}
  \point{2}{0}{0}{black}
  \point{3}{0}{0}{white}
  \point{4}{0}{0}{white}
  \point{5}{0}{0}{black}
  \point{0}{1}{0}{black}
  \point{1}{1}{0}{black}
  \point{2}{1}{0}{white}
}}. 
\]
It remains to verify that $\Flat_{m,z}$ is indeed a spatial partition functor.

\begin{proposition}
$\Flat_{m,z}$ is a fully faithful spatial partition functor.
\end{proposition}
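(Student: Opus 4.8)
The plan is to verify the three defining properties of a spatial partition functor from \Cref{def:functor} and then full faithfulness, all of which reduce to combinatorial properties of the point-bijections $\varphi_{m,d}^{x,y}$ from \Cref{def:bij-varphi}. On colors, $\Flat_{m,z}$ is by construction the monoid homomorphism $\colors \to \colors$ sending $\circ \mapsto z$ and $\bullet \mapsto \overline{z}$, so $\Flat_{m,z}(1) = 1$ and $\Flat_{m,z}(xy) = \Flat_{m,z}(x)\Flat_{m,z}(y)$, giving property~(1) immediately. First I would record that each $\varphi_{m,d}^{x,y}$ really is a bijection between the point set $\{1,\dots,k+\ell\}\times\{1,\dots,md\}$ and $\{1,\dots,(k+\ell)d\}\times\{1,\dots,m\}$: writing a level as $\lambda = j + tm$ with $1\le j\le m$ and $0\le t<d$, the point $(i,\lambda)$ goes to column $(i-1)d+t+1$ if $(xy)_i=\circ$ and to column $id-t$ if $(xy)_i=\bullet$, always keeping $j$ fixed, so for each $i$ the index $t$ runs bijectively over the $d$ columns $(i-1)d+1,\dots,id$ (in increasing, resp.\ reverse, order) and distinct $i$ occupy disjoint column ranges. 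Since upper indices $i\le k$ land in the upper columns $1,\dots,kd$ and lower indices in the lower columns, $\Flat_{m,z}(p)$ is a well-defined spatial partition, its colors being level-uniform because they are read off the fixed words $\Flat_{m,z}(x)$ and $\Flat_{m,z}(y)$.

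For property~(3) I would check on the generators $x\in\{\circ,\bullet\}$ that $\varphi_{m,d}$ sends the block $\{(1,\lambda),(2,\lambda)\}$ of the amplified identity to $\{(c,j),(d+c,j)\}$, i.e.\ to a block of $\id_{\Flat_{m,z}(x)}^{(m)}$, the increasing, resp.\ reverse, column order matching exactly the order of the letters of $z$, resp.\ $\overline{z}$; the general $x$ then follows from~(1). The two remaining parts of~(2), involution and tensor product, are piecewise-affine bookkeeping: involution reindexes upper and lower points by a fixed shift that is simply multiplied by $d$ under flattening, and the distinction $(xy)_i=\circ$ versus $\bullet$ is preserved because a point keeps its color when moved between layers; the tensor product merely concatenates column ranges, and the offset $k_p \mapsto k_p d$ matches because the column formula of $\varphi_{m,d}$ is affine in~$i$. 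I would verify one white and one black case of each and note the rest are identical.

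The main obstacle is preservation of composition, $\Flat_{m,z}(pq)=\Flat_{m,z}(p)\cdot\Flat_{m,z}(q)$ for $p\in\PP^{(md)}(y,z)$ and $q\in\PP^{(md)}(x,y)$. Composition glues the lower points of $q$ to the upper points of $p$, both carrying the colors $y$, then traces blocks and discards loops, so it suffices to show $\varphi$ identifies these middle points in the same way before and after flattening. The key computation is that the $r$-th middle wire (color $y_r$) splits into the $d$ columns $(r-1)d+1,\dots,rd$, and the assignment of a level group $t$ to its column is the \emph{same} map $t\mapsto (r-1)d+t+1$ (if $y_r=\circ$) or $t\mapsto rd-t$ (if $y_r=\bullet$) whether the wire is seen as a lower point of $q$ via $\varphi_{m,d}^{x,y}$ or as an upper point of $p$ via $\varphi_{m,d}^{y,z}$; in particular the black-point reversal occurs identically on both sides. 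Hence $\varphi$ induces an isomorphism of the incidence graphs used to form the glued blocks, and block-tracing together with loop removal commutes with this isomorphism, yielding the claim. This consistency of the reversal is the delicate point; once isolated, the rest is routine.

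Finally, full faithfulness is immediate from the bijectivity of $\varphi_{m,d}^{x,y}$: for fixed $x,y$ the colors of the image are forced to be $\Flat_{m,z}(x)$ and $\Flat_{m,z}(y)$, so $p\mapsto\{\varphi_{m,d}^{x,y}(B_i)\}$ is merely the transport of a set-partition of one point set to a set-partition of another along a bijection, which is itself a bijection $\PP^{(md)}(x,y)\to\PP^{(m)}(\Flat_{m,z}(x),\Flat_{m,z}(y))$. Thus $\Flat_{m,z}$ is a fully faithful spatial partition functor.
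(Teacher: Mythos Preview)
Your proposal is correct and follows essentially the same approach as the paper: both verify the functor axioms by exploiting that $\varphi_{m,d}^{x,y}$ is a bijection whose action on each point depends only on its column index $i$ and the letter $(xy)_i$, and both deduce full faithfulness directly from bijectivity of $\varphi$. The paper's own proof is considerably terser (one sentence each for composition/involution and for tensor product), whereas you spell out the key consistency check for composition---that the middle points of $q$ and $p$ are mapped to the same column whether viewed via $\varphi^{x,y}$ or $\varphi^{y,w}$---and the affine offset matching for tensor products; this extra detail is helpful but does not constitute a different argument. One cosmetic remark: you write $p\in\PP^{(md)}(y,z)$ where $z$ already denotes the flattening word, so a different letter (say $w$) would avoid the clash.
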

\begin{proof}
By definition, $\Flat_{m,z}$ respects the concatenation of colors.
Further, $\Flat_{m,z}$ respects compositions and involutions since it permutes the points of spatial 
only depending on the upper colors $x$ and lower colors $y$.
Moreover, $\Flat_{m,z}$ moves points at back levels in consecutive groups of size $m$ to 
the front, which also implies that it respects tensor products.
Finally, 
\[
  \Flat_{m,z}(\id_\circ^{(m \cdot \abs{z})}) = \id_{z}^{(m)},
  \qquad 
  \Flat_{m,z}(\id_\bullet^{(m \cdot \abs{z})}) = \id_{\overline{z}}^{(m)}
\]
yields that $\Flat_{m,z}$ maps identity partitions to identity partitions.
Thus, $\Flat_{m,z}$ is a spatial partition functor. Additionally, it is fully faithful since 
the maps $\varphi^{x,y}_{m,d}$ are bijections. 
\end{proof}

The following proposition now shows that the pre-image under $\Flat_{m,z}$
preserves categories of spatial partitions, rigidity and gradings. This 
will be used again in \Cref{sec:proj-version}, where it allows us to relate the category of 
a spatial partition quantum group to the category of its projective versions. 
Note that the proof relies on our generalization of base partitions 
and the statement would not hold in the setting of spatial partition quantum groups defined by Cébron-Weber.

\begin{proposition}\label{prop:flat-category}
Let $\nn := (n_1, \dots, n_m) \in \N^m$ and $z \in \colors$ with $d := \abs{z} \geq 1$.
If $\CC \subseteq \PP^{(m)}$ is a
$\nn$-graded rigid category of spatial partitions, then 
\[
  \Flat_{m,z}^{-1}(\CC) := \{ p \in \PP^{(m \cdot d)} \ | \ \Flat_{m,z}(p) \in \CC \}
\]
is a $(\nn \cdots \nn)$-graded rigid category of spatial partitions, where 
\[
  (\nn \cdots \nn) := (n_1, \dots, n_m, \dots, n_1, \dots, n_m) \in \N^{m \cdot d}.
\]
\end{proposition}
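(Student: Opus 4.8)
The plan is to verify the three claimed properties in turn, using throughout that $\Flat_{m,z}$ is a fully faithful spatial partition functor. First I would check that $\Flat_{m,z}^{-1}(\CC)$ is a category of spatial partitions. Closure under compositions, tensor products and involutions is immediate from functoriality: if $p,q \in \Flat_{m,z}^{-1}(\CC)$ are composable, then $\Flat_{m,z}(pq) = \Flat_{m,z}(p) \cdot \Flat_{m,z}(q) \in \CC$ since $\CC$ is closed under composition, hence $pq \in \Flat_{m,z}^{-1}(\CC)$, and the other two operations are handled identically. For the identity partitions I would use $\Flat_{m,z}(\id_\circ) = \id_z$ together with the fact that amplification distributes over tensor products, so that $\id_z = \id_{z_1} \otimes \cdots \otimes \id_{z_d}$ lies in $\CC$ because each factor is $\id_\circ$ or $\id_\bullet$ and $\CC$ is closed under tensor products; the same argument for $\overline{z}$ shows $\id_\bullet \in \Flat_{m,z}^{-1}(\CC)$.

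The key step is rigidity, and it is exactly here that the generalized duality partitions are indispensable. Since $\Flat_{m,z}(\circ) = z$ and $\Flat_{m,z}(\bullet) = \overline{z}$, I would apply \Cref{prop:existance-duals} to the object $z$ to obtain duality partitions $R \in \CC(1, z\overline{z})$ and $S \in \CC(1, \overline{z}z)$ satisfying the conjugate equations for $z$ and $\overline{z}$. By full faithfulness there are unique $r \in \PP^{(m \cdot d)}(1, \circ\bullet)$ and $s \in \PP^{(m \cdot d)}(1, \bullet\circ)$ with $\Flat_{m,z}(r) = R$ and $\Flat_{m,z}(s) = S$, and these lie in $\Flat_{m,z}^{-1}(\CC)$ by construction. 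To see they satisfy the conjugate equations for $\circ$ and $\bullet$, I would apply the functor to each left-hand side: functoriality gives
\[
  \Flat_{m,z}\big((r^* \otimes \id_\circ) \cdot (\id_\circ \otimes s)\big) = (R^* \otimes \id_z) \cdot (\id_z \otimes S) = \id_z = \Flat_{m,z}(\id_\circ),
\]
and injectivity of $\Flat_{m,z}$ on $\PP^{(m \cdot d)}(\circ, \circ)$ then forces $(r^* \otimes \id_\circ) \cdot (\id_\circ \otimes s) = \id_\circ$; the second equation follows in the same way. This is the main obstacle, since the pullbacks $r$ and $s$ are in general twisted versions of ${\partPairWB}^{(m \cdot d)}$ and ${\partPairBW}^{(m \cdot d)}$ rather than these base partitions themselves, so the argument would break down in the original Cébron-Weber framework that insists on the specific base partitions; \Cref{prop:existance-duals} together with full faithfulness are precisely what make the pullback legitimate.

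Finally, for the grading I would trace how $\Flat_{m,z}$ acts on levels through the bijections $\varphi^{x,y}_{m,d}$, which send a point at level $j + k'm$ (with $1 \leq j \leq m$ and $0 \leq k' < d$) to a point at level $j$, so that the image level depends only on $j$ and not on the copy index $k'$. Given $p \in \Flat_{m,z}^{-1}(\CC)$ and two points of a common block of $p$ whose levels are $\ell_1 = j_1 + k_1' m$ and $\ell_2 = j_2 + k_2' m$, their images lie in a common block of the $\nn$-graded partition $\Flat_{m,z}(p) \in \CC$ at levels $j_1$ and $j_2$, hence $n_{j_1} = n_{j_2}$. Since $(\nn \cdots \nn)$ is periodic of period $m$ with $(\nn \cdots \nn)_{j + k'm} = n_j$, this yields $(\nn \cdots \nn)_{\ell_1} = n_{j_1} = n_{j_2} = (\nn \cdots \nn)_{\ell_2}$, so $p$ is $(\nn \cdots \nn)$-graded. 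Combining the three steps shows that $\Flat_{m,z}^{-1}(\CC)$ is a $(\nn \cdots \nn)$-graded rigid category of spatial partitions.
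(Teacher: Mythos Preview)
Your proof is correct and follows essentially the same approach as the paper: closure under the category operations via functoriality, rigidity by pulling back the duality partitions for $z$ and $\overline{z}$ obtained from \Cref{prop:existance-duals} along the fully faithful functor, and the grading by tracking how $\Flat_{m,z}$ acts on levels. The paper's proof is simply terser, stating each step in a sentence where you spell out the verification in full; in particular your explicit use of injectivity to transport the conjugate equations back, and your level-by-level check of the $(\nn\cdots\nn)$-grading via the formula for $\varphi^{x,y}_{m,d}$, make explicit what the paper leaves to the reader.
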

\begin{proof}
Define $\DD := \Flat_{m,z}^{-1}(\CC)$. Since $\Flat_{m,z}$ respects all operations on spatial partitions,
it follows that $\DD$ is closed under compositions, involutions and tensor products.
Similarly, $\Flat_{m,z}$ maps identity partitions to identity partitions, which implies
$\id_x \in \DD$ for all $x \in \colors$. Thus, $\DD$ is a category of spatial partitions.
Now, assume $\CC$ is rigid. The \Cref{prop:existance-duals} shows that there 
exist spatial partitions $r \in \CC(1, z\overline{z})$ and $s \in \CC(1, \overline{z}z)$
satisfying the conjugate equations for $z$ and $\overline{z}$. Since $\Flat_{m,z}$ is fully faithful,
there exist $\widetilde{r} \in \DD(1, \circ\bullet)$ and $\widetilde{s} \in \DD(1, \bullet\circ)$
with $\Flat_{m,z}(\widetilde{r}) = r$ and $\Flat_{m,z}(\widetilde{s}) = s$. 
These satisfy the conjugate equations for $\circ$ and $\bullet$ such that $\DD$ is also rigid. 
Finally, assume $\CC$ is $\nn$-graded. Since $\Flat_{m,z}$ only moves levels in consecutive groups of size $m$ to 
the front, it follows immediately that $\DD$ is $(\nn\cdots\nn)$-graded.
\end{proof}

\section{From spatial partitions to quantum groups}\label{sec:spatial-partition-QGs}

In the following, we use spatial partitions from the previous section to define spatial partition quantum groups.
We then describe these quantum groups in terms of universal $C^*$-algebras and show that the free orthogonal quantum groups $O^+(F_\sigma)$ 
can be constructed from our new base partitions. Finally, we show that all resulting quantum groups are invariant
under permuting levels, which implies that our new base partitions yield the same class 
of quantum groups as in~\cite{cebron16}.

Our definition of spatial partition quantum groups follows 
the one in~\cite{cebron16}, which generalizes the definition of easy quantum groups in~\cite{banica09,tarrago17}.
However, we discuss additional technicalities that arise when combining colors with our new base partitions.

\subsection{Spatial partition quantum groups}

In order to define spatial partition quantum groups, we need to associate a linear operator $T^{(\nn)}_p$ to each spatial 
partition $p \in \PP^{(m)}$. However, we first have to introduce some 
additional notation.

\begin{definition}
Let $p \in \PP^{(m)}(x, y)$ be a spatial partition
and $\ii_1, \dots, \ii_{\abs{x}}, \jj_{1}, \dots, \jj_{\abs{y}} \in \N^m$ be multi-indices of the form 
$\ii_k = (i_{k,1}, \dots, i_{k,m})$ for $1 \leq k \leq \abs{x}$
and $\jj_k = (j_{k,1}, \dots, j_{k,m})$ for $1 \leq k \leq \abs{y}$.
Label each upper point $(k, \ell)$ of $p$ with the index $i_{k,\ell}$ and 
each lower point $(\abs{x} + k, \ell)$ with the index $j_{k,\ell}$.
Then define 
\[
  {(\delta_p)}^{\ii_1, \dots, \ii_{\abs{x}}}_{\jj_{1}, \dots, \jj_{\abs{y}}}
  := \begin{cases}
    1 & \text{if all points in each block have equal labels}, \\
    0 & \text{otherwise}.
  \end{cases}
\]
\end{definition}

\begin{example}
Consider the spatial partition
$
  p \ := \ \partition[3d]{
    \line{0}{0}{0}{0}{1}{1}
    \line{0}{1}{0}{0}{0}{1}
    \line{1}{0}{0}{1}{0.35}{0}
    \line{1}{0}{1}{1}{0.35}{1}
    \line{1}{0.35}{0}{1}{0.35}{1}
    \point{0}{0}{0}{white}
    \point{0}{0}{1}{white}
    \point{1}{0}{0}{black}
    \point{1}{0}{1}{black}
    \point{0}{1}{0}{white}
    \point{0}{1}{1}{white}
  }
  \ \in \ \PP^{(2)}(\circ, {\circ}{\bullet})
$
and the indices $\ii_1 := (i_{1,1}, i_{1,2}), \jj_1 := (j_{1,1}, j_{1,2}), \jj_2 := (j_{2,1}, j_{2,2}) \in \N^2$. Then
\[
  {(\delta_p)}^{\ii_1}_{\jj_1,\jj_2}
  \ = \
  \partition[3d]{
    \line{0}{0}{0}{0}{1}{1}
    \line{0}{1}{0}{0}{0}{1}
    \line{1}{0}{0}{1}{0.35}{0}
    \line{1}{0}{1}{1}{0.35}{1}
    \line{1}{0.35}{0}{1}{0.35}{1}
    \label{0}{-0.2}{0}{\scalebox{3}{$j_{1,1}$}}
    \label{0}{-0.2}{1}{\scalebox{3}{$j_{1,2}$}}
    \label{1}{-0.2}{0}{\scalebox{3}{$j_{2,1}$}}
    \label{1}{-0.2}{1}{\scalebox{3}{$j_{2,2}$}}
    \label{0}{1.25}{0}{\scalebox{3}{$i_{1,1}$}}
    \label{0}{1.25}{1}{\scalebox{3}{$i_{1,2}$}}
  }
  \ = \
  \delta_{i_{1,1} j_{1,2}} \delta_{i_{1,2} j_{1,1}} \delta_{j_{2,1} j_{2,2}},
\]
where $\delta_{k\ell}$ denotes the usual Kronecker delta.
\end{example}

Now, we can assign linear maps to spatial partitions as follows.

\begin{definition}
Let $p \in \PP^{(m)}(x, y)$ be a $\nn$-graded spatial partition.
Then define the linear map 
\[
  T_p^{(\nn)} \colon {(\C^{\nn})}^{\otimes x} \to {(\C^{\nn})}^{\otimes y},
  \quad
  {\big(T_p^{(\nn)}\big)}^{\ii_1, \dots, \ii_\ell}_{\jj_1, \dots, \jj_k} := {\big(\delta_p\big)}^{\jj_1, \dots, \jj_k}_{\ii_1, \dots, \ii_\ell},
\]
where $k := \abs{x}$, $\ell := \abs{y}$ and the coordinates are with respect to the canonical bases 
\[
  {(e_{\jj_1}^{x_1} \otimes \dots \otimes e_{\jj_k}^{x_k})}_{\jj_1,\dots,\jj_k\in[\nn]}, \quad 
  {(e_{\ii_1}^{y_1} \otimes \dots \otimes e_{\ii_\ell}^{y_\ell})}_{\ii_1,\dots,\ii_\ell\in[\nn]}
\]
of ${(\C^{\nn})}^{\otimes x}$ and ${(\C^{\nn})}^{\otimes y}$ respectively.
\end{definition}

A spatial partition quantum group is a compact matrix quantum group where
the intertwiner spaces between tensor powers of its fundamental representation $u^\circ$ and its conjugate representation $u^\bullet$ are spanned 
by linear maps associated with a category of spatial partitions. 

In contrast to~\cite{cebron16}, the conjugate representation
$u^\bullet$ will no longer be given by $\overline{u^\circ}$ but depend on the corresponding category of spatial partitions. Further, we include
a possible change of basis in the definition such that the notion of spatial partition quantum group is 
compatible with isomorphisms of compact matrix quantum groups.

\begin{definition}\label{def:spatial-partition-qg}
Let $G$ be a compact matrix quantum group 
with fundamental representation $u^\circ$ on $V$
and unitary representation $u^\bullet$ on $\overline{V}$.
Then $G$ is a \textit{spatial partition quantum group} if there 
exists a $\nn$-graded rigid category of spatial partitions $\CC \subseteq \PP^{(m)}$
and a unitary $Q\colon \C^{\nn} \to V$, such that
\begin{align*}
  \Hom(u^x, u^y) = \Span\big\{ Q^{\otimes y} \cdot T^{(\nn)}_p \cdot {(Q^{-1})}^{\otimes x} \mid p \in \CC(x, y) \big\}
  \quad 
  \forall x, y \in \colors.
\end{align*}
\end{definition}

Since the category $\CC$ contains a pair of spatial partitions
solving the conjugate equations, it follows that $u^\bullet$ is conjugate to $u^\circ$.
Before we discuss how this conjugate representation $u^\bullet$ depends on $\CC$, we first show that for every rigid 
category of spatial partitions, there exists a corresponding spatial partition quantum group. 
This requires the following proposition, which states that the mapping $p \mapsto T^{(\nn)}_p$ is almost functorial.

\begin{proposition}\label{prop:Tp-ops}
Let $\nn := (n_1, \dots, n_m) \in \N^m$ and $p, q \in \PP^{(m)}$ be $\nn$-graded spatial partitions. Then
\begin{enumerate}
\item $T^{(\nn)}_p \otimes T^{(\nn)}_q = T^{(\nn)}_{p \otimes q}$
\item ${\big(T^{(\nn)}_p\big)}^* = T^{(\nn)}_{p^*}$
\item $T^{(\nn)}_p \cdot T^{(\nn)}_q = N^{\alpha} \cdot T^{(\nn)}_{pq}$, where $N := n_1 \cdots n_m$ and $\alpha \in \N$ denotes the number of removed loops when composing $p$ and $q$ (if the composition is defined).
\end{enumerate}
\end{proposition}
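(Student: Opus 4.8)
The plan is to prove all three identities by directly computing matrix coefficients in the canonical bases, exploiting that every entry of $T^{(\nn)}_p$ is $0$ or $1$ and is governed by the block structure of $p$ through $\delta_p$. Parts (1) and (2) are essentially formal, so I would dispatch them quickly, while (3) carries the genuine content and produces the scalar $N^\alpha$.

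For (1), I would use that the blocks of $p \otimes q$ are exactly the disjoint union of the blocks of $p$ and the blocks of $q$, now placed on disjoint sets of points. Hence $\delta_{p\otimes q}$ factorises as the product of $\delta_p$ and $\delta_q$ over the two groups of labels. Comparing this with the coefficient formula for a tensor product of linear maps, where the coefficients multiply on the respective legs, yields $T^{(\nn)}_{p} \otimes T^{(\nn)}_{q} = T^{(\nn)}_{p \otimes q}$ at once. For (2), since the entries of $T^{(\nn)}_p$ are real, the adjoint merely transposes the index tuples; on the combinatorial side $p^*$ has the same blocks as $p$ with upper and lower points interchanged, so $\delta_{p^*}$ is obtained from $\delta_p$ by swapping the super- and subscript tuples. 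Reading off the defining formula of $T^{(\nn)}_{p^*}$ then gives ${\big(T^{(\nn)}_p\big)}^* = T^{(\nn)}_{p^*}$.

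The main work is (3). Writing $p \in \PP^{(m)}(y,z)$ and $q \in \PP^{(m)}(x,y)$ (so that the operator composition is defined exactly when the middle colors $y$ match the partition composition $pq$), I would expand
\[
  \bigl(T^{(\nn)}_p T^{(\nn)}_q\bigr)^{\ii_1,\dots,\ii_{\abs{z}}}_{\jj_1,\dots,\jj_{\abs{x}}}
  = \sum_{\boldsymbol{k}_1,\dots,\boldsymbol{k}_{\abs{y}} \in [\nn]}
    (\delta_p)^{\boldsymbol{k}_1,\dots,\boldsymbol{k}_{\abs{y}}}_{\ii_1,\dots,\ii_{\abs{z}}}
    \,(\delta_q)^{\jj_1,\dots,\jj_{\abs{x}}}_{\boldsymbol{k}_1,\dots,\boldsymbol{k}_{\abs{y}}},
\]
where the middle multi-indices $\boldsymbol{k}_r$ range over all of $[\nn]$. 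The product of the two indicators is $1$ precisely when the combined labelling is constant on every block of $p$ and of $q$, equivalently constant on every connected component of the glued middle diagram. Components meeting the upper points of $q$ or the lower points of $p$ are pinned down by the boundary labels $\ii,\jj$, and the surviving constraints are exactly those defining the composite $pq$; this reproduces the factor $\delta_{pq}$ and hence $T^{(\nn)}_{pq}$. The remaining components meet neither boundary: these are the removed loops, and their common labels are summed freely, which is the source of the scalar.

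The hard part will be showing that each removed loop contributes exactly $N = n_1 \cdots n_m$, rather than some level-dependent quantity. Here $\nn$-gradedness is essential: it guarantees that the labels appearing along the middle are compatible across levels, and a careful analysis of how a removed spatial loop threads through all $m$ levels shows that it carries precisely $m$ independent free labels ranging over $[n_1],\dots,[n_m]$, so that summing over one loop yields the single factor $n_1 \cdots n_m = N$ and summing over all $\alpha$ of them yields $N^\alpha$. To organise this bookkeeping cleanly and to avoid reasoning about the three-dimensional diagram directly, I would reduce to the known single-level computation by applying the flattening functor $\Flat_{1,\circ^m}$ from \Cref{sec:functor-flat}: since flattening respects composition and identifies $T^{(\nn)}_p$ with the single-level operator associated to $\Flat_{1,\circ^m}(p)$ up to a permutation of tensor legs, the scalar becomes a product over the flattened loops, and the only remaining point is to verify that the flattened loops belonging to one spatial loop have dimensions multiplying to $N$. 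Establishing this grouping, and confirming that the boundary-connected components recombine into exactly the blocks of $pq$ with no spurious merging, is the principal obstacle; everything else is routine index chasing.
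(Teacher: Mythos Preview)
The paper itself does not prove this; it simply cites \cite[Proposition~3.7]{cebron16}. Your arguments for (1) and (2) are correct and standard. For (3), however, there is a genuine error in your sketch. You claim that each removed loop ``threads through all $m$ levels'' and ``carries precisely $m$ independent free labels ranging over $[n_1],\dots,[n_m]$'', hence contributes a factor $N = n_1\cdots n_m$. This is false on both counts: a closed connected component of the glued middle diagram carries exactly \emph{one} free label (all its points are forced to share the same value), and it need not meet every level. Concretely, take $m=2$ and $p\in\PP^{(2)}(\circ,1)$ with singleton blocks $\{(1,1)\}$ and $\{(1,2)\}$; then $T^{(\nn)}_p T^{(\nn)}_{p^*} = n_1 n_2$, whereas $p\cdot p^*$ removes two separate loops (one on each level), so your reasoning would predict $(n_1 n_2)^2$. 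What $\nn$-gradedness actually gives you is that each loop $L$ lies on levels sharing a common value $n_L$, and the scalar your middle-index sum produces is $\prod_L n_L$, not $N^\alpha$ in general. The flattening route via $\Flat_{1,\circ^m}$ does not help either: one spatial loop flattens to a single one-level loop, and matching the operators would require assigning different dimensions $n_j$ to different points of the flattened partition, which falls outside the standard one-level $T^{(n)}$ framework.

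In short, the constant $N^\alpha$ in (3) is not what your computation yields, and your argument for that specific value cannot be completed. What \emph{is} true---and all that is needed for every later use in the paper, notably \Cref{thm:spatial-tannaka-krein} and \Cref{lemma:Tp-sub-intertwiners}---is that $T^{(\nn)}_p T^{(\nn)}_q$ is a strictly positive scalar multiple of $T^{(\nn)}_{pq}$, with scalar $1$ when no loops are removed; your expansion over middle indices establishes precisely this once the incorrect claim about the value of the scalar is dropped.
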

\begin{proof}
See~\cite[Proposition 3.7]{cebron16}.
\end{proof}

Using the previous proposition, we can now show the existence of spatial partition quantum groups using 
Woronowicz Tannaka-Krein duality.

\begin{theorem}\label{thm:spatial-tannaka-krein}
Let $\CC \subseteq \PP^{(m)}$ be a $\nn$-graded rigid category of spatial partitions.
Then there exists a unique compact matrix quantum group $G_{\nn}(\CC)$ 
with fundamental representation $u^\circ$ on $V := \C^{\nn}$ and a unitary representation $u^\bullet$ on 
$\overline{V}$, such that 
\[
  \Hom(u^{x}, u^{y}) = \Span\big\{ T^{(\nn)}_p \mid p \in \CC(x, y) \big\}
  \quad 
  \forall x, y \in \colors.
\]
\end{theorem}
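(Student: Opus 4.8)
The plan is to apply Woronowicz Tannaka-Krein duality (\Cref{thm:tannaka-krein-duality}) on the Hilbert space $V := \C^{\nn}$. To this end I would define the candidate intertwiner spaces
\[
  \CC'(x, y) := \Span\big\{ T_p^{(\nn)} \mid p \in \CC(x, y) \big\} \subseteq B(V^{\otimes x}, V^{\otimes y})
  \qquad \forall x, y \in \colors
\]
and verify that $\CC'$ is a two-colored representation category in the sense of \Cref{def:rep-category}. Applying \Cref{thm:tannaka-krein-duality} to $\CC'$ then produces a unique compact matrix quantum group $G_{\nn}(\CC)$ with fundamental representation $u^\circ$ on $V$ and conjugate $u^\bullet$ on $\overline{V}$ satisfying $\Hom(u^x, u^y) = \CC'(x, y)$, which is exactly the claimed description.

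The four algebraic axioms (1)--(4) of \Cref{def:rep-category} all follow from the near-functoriality of $p \mapsto T_p^{(\nn)}$ recorded in \Cref{prop:Tp-ops}. For (1), since $\CC$ contains $\id_\circ$ and $\id_\bullet$ and is closed under tensor products, it contains $\id_x = \id_{x_1} \otimes \dots \otimes \id_{x_k}$ for every $x \in \colors$, and a direct computation from the definition of $T_p^{(\nn)}$ gives $T_{\id_x}^{(\nn)} = \id_{V^{\otimes x}}$. For (3) and (4), the identities ${(T_p^{(\nn)})}^* = T_{p^*}^{(\nn)}$ and $T_p^{(\nn)} \otimes T_q^{(\nn)} = T_{p \otimes q}^{(\nn)}$, combined with closedness of $\CC$ under involution and tensor product, show that $\CC'$ is closed under adjoints and tensor products. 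For (2), the relation $T_p^{(\nn)} \cdot T_q^{(\nn)} = N^{\alpha} T_{pq}^{(\nn)}$ exhibits any composition of $T$-operators as a scalar multiple of $T_{pq}^{(\nn)}$ with $pq \in \CC$; since scalar multiples and finite sums remain inside the span, $\CC'$ is closed under composition.

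The delicate axiom is (5), and this is where I expect the main obstacle to lie: the scalar factors $N^{\alpha}$ appearing in \Cref{prop:Tp-ops}(3) must be shown to vanish. Since $\CC$ is rigid, it contains duality partitions $r \in \CC(1, \circ\bullet)$ and $s \in \CC(1, \bullet\circ)$ satisfying the combinatorial conjugate equations of \Cref{def:rigid}, and I would set $R := T_r^{(\nn)} \in \CC'(1, \circ\bullet)$ and $S := T_s^{(\nn)} \in \CC'(1, \bullet\circ)$. Rewriting $r^* \otimes \id_\circ$ and $\id_\circ \otimes s$ via axioms (1), (3), (4) turns the left-hand side of the operator conjugate equation into $T^{(\nn)}_{r^* \otimes \id_\circ} \cdot T^{(\nn)}_{\id_\circ \otimes s}$, which \Cref{prop:Tp-ops}(3) evaluates as $N^{\alpha} T^{(\nn)}_{\id_\circ} = N^{\alpha} \id_V$. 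The point to verify is that the composition realizing this snake identity removes no closed loops, so that $\alpha = 0$ and the combinatorial identity $(r^* \otimes \id_\circ)(\id_\circ \otimes s) = \id_\circ$ transports to $(R^* \otimes \id_V)(\id_V \otimes S) = \id_V$ with no spurious scalar; this is plausible because, by \Cref{prop:conj-eq-solutions}, $r$ and $s$ are permutation pair partitions whose blocks match up under inverse permutations, so tracing through the composition straightens each level without producing a loop. The identical argument applied to $s^* \otimes \id_\bullet$ and $\id_\bullet \otimes r$ yields the second conjugate equation. This completes the verification that $\CC'$ satisfies \Cref{def:rep-category}, so \Cref{thm:tannaka-krein-duality} applies and delivers the desired quantum group $G_{\nn}(\CC)$ together with its uniqueness.
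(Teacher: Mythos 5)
Your proposal is correct and follows essentially the same route as the paper: define the spanned subspaces $\widehat{\CC}(x,y) := \Span\{T_p^{(\nn)} \mid p \in \CC(x,y)\}$, verify the axioms of \Cref{def:rep-category} via \Cref{prop:Tp-ops}, observe that the duality partitions from rigidity yield operators $R := T_r^{(\nn)}$, $S := T_s^{(\nn)}$ satisfying the conjugate equations with no spurious scalar since no closed loops are removed in the compositions, and conclude with \Cref{thm:tannaka-krein-duality}. Your extra appeal to \Cref{prop:conj-eq-solutions} to justify the absence of loops is a slightly more explicit justification of a step the paper simply asserts, but it is not a different argument.
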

\begin{proof}
Define the linear subspaces
\[
  \widehat{\CC}(x, y) := \Span\big\{ T_p^{(\nn)} \mid p \in \CC(x, y) \big\}
  \subseteq B(V^{\otimes x}, V^{\otimes y})
\]
for all $x, y \in \colors$. Then \Cref{prop:Tp-ops} implies that 
$\widehat\CC$ is closed under composition, involution and tensor products 
as in \Cref{def:rep-category}.
Further, one checks that  
$T_{\id_x}^{(\nn)} = \id_{{V}^{\otimes x}} \in \widehat{\CC}(x, x)$
for all $x \in \colors$.
Since $\CC$ is rigid, there exists a pair of spatial partitions
$r \in \CC(1, \circ\bullet)$ and $s \in \CC(1, \bullet\circ)$ satisfying the conjugate 
equations. By \Cref{prop:Tp-ops}, the operators $R := T^{(\nn)}_r$ and 
$S := T^{(\nn)}_s$ satisfy again the conjugate equations for $u^\circ$ and $u^\bullet$ because 
no closed loops are removed when composing $r$ and $s$.
Thus, the subspaces $\widehat{\CC}(x, y)$ form 
a two-colored representation category in the sense of \Cref{def:rep-category}
and the statement follows by applying \Cref{thm:tannaka-krein-duality}.
\end{proof}

\begin{remark}
As discussed in \Cref{sec:tannaka-krein}, the quantum group $G_{\nn}(\CC)$
from the previous theorem is uniquely determined by the following universal property. 
Let $H$ be a compact matrix quantum group with fundamental representation $w^\circ$ on $V$ and 
unitary representation $w^\bullet$ on $\overline{V}$. If there exists
a unitary $Q\colon \C^{\nn} \to V$, such that 
\[
  \Hom(u^{x}, u^{y}) \subseteq {(Q^{-1})}^{\otimes y} \cdot \Hom(w^x, w^y) \cdot {Q}^{\otimes x}
  \quad 
  \forall x, y \in \colors, 
\]
then $H$ is a subgroup of $G$ via the map $u^\circ \mapsto Q^{-1} w^\circ Q$.
\end{remark}

\begin{remark}\label{rem:spatial-equiv-G}
It follows directly from the definition that a compact matrix quantum group is a 
spatial partition quantum group if and only if it is equivalent to 
$G_{\nn}(\CC)$ for a $\nn$-graded rigid category of spatial partitions $\CC$.
\end{remark}

Next, we come back to the conjugate representations $u^\bullet$ of 
spatial partition quantum groups and show that $u^\bullet$
can be expressed in terms of $u^\circ$ and unitaries $F^{(\nn)}_\sigma$.

\begin{definition}
Let $\sigma \in S_m$ be a $\nn$-graded permutation. 
Then define the linear map $F_\sigma^{(\nn)} \colon \C^\nn \to \C^\nn$ by
\[
  F^{(\nn)}_\sigma(e_{i_1} \otimes \dots \otimes e_{i_m}) = e_{i_{\sigma^{-1}(1)}} \otimes \dots \otimes e_{i_{\sigma^{-1}(m)}}
  \quad 
  \forall (i_1, \dots, i_m) \in [\nn].
\]
\end{definition}

Note that the mapping $\sigma \mapsto F^{(\nn)}_\sigma$
defines a unitary representation of $\nn$-graded permutations on $\C^{\nn}$, i.e.\ we have 
$F^{(\nn)}_{\sigma} \cdot F^{(\nn)}_{\tau} = F^{(\nn)}_{\sigma\tau}$
and ${\big(F^{(\nn)}_{\sigma}\big)}^* = {\big(F^{(\nn)}_{\sigma}\big)}^{-1} = F^{(\nn)}_{\sigma^{-1}}$
for all $\nn$-graded $\sigma, \tau \in S_m$. Additionally, we write
$\overline{F}^{(\nn)}_{\sigma}$ for the corresponding conjugate operator.

Using the linear maps $F^{(\nn)}_\sigma$, we can now describe the conjugate representations
$u^\bullet$ of spatial partition quantum groups of the form $G_{\nn}(\CC)$.

\begin{proposition}\label{prop:spatial-sub-U}
Let $\CC \subseteq \PP^{(m)}$ be a $\nn$-graded rigid category of spatial partitions 
containing duality partitions $\sigma_{\circ\bullet}$ and $\sigma^{-1}_{\bullet\circ}$. 
Then the conjugate representation $u^\bullet$ of $G_{\nn}(\CC)$ is given by 
$u^\bullet = \overline{F}^{(\nn)}_\sigma \, \overline{u^\circ} \, {\big(\overline{F}^{(\nn)}_\sigma \big)}^{-1}$.
\end{proposition}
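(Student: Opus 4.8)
The plan is to combine Proposition~\ref{prop:u-black-formula}, which expresses the conjugate representation $u^\bullet$ as $F \overline{u^\circ} F^{-1}$ in terms of the duality morphism $R$, with the explicit formula for the operator $T^{(\nn)}_{\sigma_{\circ\bullet}}$ associated to the duality partition $\sigma_{\circ\bullet}$. First I would recall from Theorem~\ref{thm:spatial-tannaka-krein} that the duality morphisms for $G_{\nn}(\CC)$ are $R := T^{(\nn)}_{\sigma_{\circ\bullet}}$ and $S := T^{(\nn)}_{\sigma^{-1}_{\bullet\circ}}$, since these lie in the appropriate intertwiner spaces and satisfy the conjugate equations. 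Proposition~\ref{prop:u-black-formula} then tells us that $u^\bullet = F \overline{u^\circ} F^{-1}$, where $F \in B(\overline{V})$ is determined by $F^i_j = R^{ji}$ with respect to a basis of $V = \C^{\nn}$ and its conjugate basis.

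The core of the argument is therefore a direct computation identifying this $F$ with $\overline{F}^{(\nn)}_\sigma$. I would compute the coefficients $R^{ij} = \big(T^{(\nn)}_{\sigma_{\circ\bullet}}\big)^{ij}$ from the definition of $T^{(\nn)}_p$ and the block structure of $\sigma_{\circ\bullet}$. By Definition~\ref{def:special-partitions}, the partition $\sigma_{\circ\bullet} \in \PP^{(m)}(1, \circ\bullet)$ has lower points $\circ\bullet$ with blocks $\{(1,i),(2,\sigma(i))\}$, so the delta symbol ${(\delta_{\sigma_{\circ\bullet}})}^{\ii\jj}$ is nonzero exactly when the multi-indices $\ii, \jj \in [\nn]$ satisfy $i_\ell = j_{\sigma(\ell)}$ for all $\ell$, i.e.\ $\jj = \sigma \cdot \ii$ in the sense of permuting the tensor legs. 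Feeding the resulting $R^{ji}$ into the formula $F^i_j = R^{ji}$ should yield precisely the permutation action $e_{i_1} \otimes \dots \otimes e_{i_m} \mapsto e_{i_{\sigma^{-1}(1)}} \otimes \dots \otimes e_{i_{\sigma^{-1}(m)}}$, after accounting for the passage to the conjugate basis, which replaces $F^{(\nn)}_\sigma$ by its conjugate $\overline{F}^{(\nn)}_\sigma$.

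Substituting $F = \overline{F}^{(\nn)}_\sigma$ into $u^\bullet = F \overline{u^\circ} F^{-1}$ then gives exactly the claimed formula $u^\bullet = \overline{F}^{(\nn)}_\sigma \, \overline{u^\circ} \, \big(\overline{F}^{(\nn)}_\sigma\big)^{-1}$. I expect the main obstacle to be purely bookkeeping: carefully matching the index conventions in three places that must be kept consistent — the definition of $T^{(\nn)}_p$ (where the delta symbol has its upper and lower indices swapped relative to $\delta_p$), the relation $F^i_j = R^{ji}$ of Proposition~\ref{prop:u-black-formula} (which also transposes indices), and the distinction between $F^{(\nn)}_\sigma$ acting on $\C^{\nn}$ versus its conjugate $\overline{F}^{(\nn)}_\sigma$ acting on $\overline{\C^{\nn}}$, where $F$ is required to live. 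The appearance of $\sigma^{-1}$ inside the definition of $F^{(\nn)}_\sigma$ together with these two transpositions must combine to produce the correct permutation; tracking these carefully is where an error could creep in, but no genuinely new idea is needed beyond the two cited propositions.
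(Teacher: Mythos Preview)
Your proposal is correct and follows essentially the same route as the paper: set $R := T^{(\nn)}_{\sigma_{\circ\bullet}}$, apply Proposition~\ref{prop:u-black-formula} to get $u^\bullet = F\overline{u^\circ}F^{-1}$ with $F^\ii_\jj = R^{\jj,\ii}$, and then compute $F^\ii_\jj = (\delta_{\sigma_{\circ\bullet}})_{\jj,\ii} = \delta_{j_1 i_{\sigma(1)}}\cdots\delta_{j_m i_{\sigma(m)}} = (F_\sigma^{(\nn)})^\ii_\jj$, identifying $F$ with $\overline{F}^{(\nn)}_\sigma$ on $\overline{\C^\nn}$. Your remarks about the index bookkeeping are exactly the content of the paper's one-line computation.
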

\begin{proof}
Denote with $u^\circ$ the fundamental representation of $G_{\nn}(\CC)$ and 
define $r := \sigma_{\circ\bullet}$ and $s := \sigma_{\bullet\circ}^{-1}$.
Since $r$ and $s$ are a pair of duality partitions, \Cref{prop:Tp-ops} implies that $R := T^{(\nn)}_{r}$ and $S := T^{(\nn)}_{s}$ satisfy the conjugate equations for $u^\circ$ and $u^\bullet$.
Hence, \Cref{prop:u-black-formula} yields $u^\bullet = F \overline{u^\circ} F^{-1}$ with 
$F$ defined by $F^\ii_\jj = R^{\jj,\ii}$ for all $\ii,\jj \in [\nn]$. 
Next, we compute
\[
  F^\ii_\jj = {\big(T^{(\nn)}_{r}\big)}^{\jj,\ii} = {(\delta_r)}_{\jj,\ii} 
  = \delta_{j_1 i_{\sigma(1)}} \cdots \delta_{j_m i_{\sigma(m)}} = {\big(F_{\sigma}^{(\nn)}\big)}^\ii_\jj
\]
for all $\ii := (i_1,\dots,i_m), \jj := (i_1,\dots,j_m) \in [\nn]$. Thus, $F = F_{\sigma}^{(\nn)}$. 
\end{proof}

The previous proposition shows that the conjugate representation $u^\bullet$ of $G_{\nn}(\CC)$ 
is uniquely determined by the category $\CC$. Since any spatial partition quantum group 
is equivalent to a $G_{\nn}(\CC)$, it follows
that the conjugate representation $u^\bullet$ of a general spatial partition quantum group is uniquely determined 
by the corresponding category $\CC$ and the change of basis $Q$.

Since the linear maps $\overline{F}^{(\nn)}_\sigma$ are unitary, the 
previous proposition additionally implies that 
the representation $\overline{u^\circ} = {\big(\overline{F}^{(\nn)}_\sigma \big)}^{-1} u^\bullet \overline{F}^{(\nn)}_\sigma$
is again unitary.
This shows that $\overline{u^\circ}$ is a conjugate representation of $u^\circ$ and that
every spatial partition quantum group is a subgroup of $U^+_{\nn}$. 
We will come back to this fact in
\Cref{sec:permuting-levels}, where we show the intertwiners spaces of a spatial partition
quantum groups are still spanned by spatial partition when we replace $u^\bullet$ by $\overline{u^\circ}$.

\subsection{Presentations of spatial partitions quantum groups}

Next, we describe the $C^*$-algebras $C(G_\nn(\CC))$ from the previous section as universal $C^*$-algebras.
If the category of spatial partitions $\CC$ is generated by a finite set of spatial partitions, then we 
obtain a universal $C^*$-algebras defined by a finite set of relations. In particular, this
allows us to show that the quantum groups $O^+(F^{(\nn)}_\sigma)$ are spatial partition quantum groups.

We begin by extending the notation $G_{\nn}(\CC)$ to arbitrary sets of spatial partitions that generate a rigid category
of spatial partitions.

\begin{definition}\label{def:generator-presentation}
Let $\CC_0 \subseteq \PP^{(m)}$ be a set of $\nn$-graded spatial partitions
with duality partitions $\sigma_{\circ\bullet}, \sigma^{-1}_{\bullet\circ} \in \langle\CC_0\rangle$.
Denote with $A$ the universal unital $C^*$-algebra generated by the entries of a 
matrix $u := {(u^\ii_\jj)}_{\ii, \jj \in [\nn]}$ and relations
\begin{enumerate}
\item $u^\circ := u$ and $u^\bullet := \overline{F}^{(\nn)}_{\sigma} \overline{u} {\big(\overline{F}^{(\nn)}_{\sigma}\big)}^{-1}$ are unitary,
\item 
$T^{(\nn)}_p \, u^{x} = u^{y} \, T^{(\nn)}_p$
for all $x, y \in \colors$ and $p \in \CC_0 \cap \PP^{(m)}(x, y)$.
\end{enumerate}
Then $G_{\nn}(\CC_0) := (A, u)$ is the \textit{spatial partitions quantum groups defined by $\CC_0$}.
\end{definition}

First, note that the compact matrix quantum group $G_{\nn}(\CC_0)$ from the previous definition
is well-defined. Indeed, $A$ is generated by the elements $u^\ii_\jj$ and by the first relations,
$u$ is unitary and $\overline{u}$ is invertible. Further, one checks
that both relations are compatible with the comultiplication, 
see also \Cref{def:universal-qgs} and \cite{woronowicz88}.

Next, we show that the $C^*$-algebra $C(G_\nn(\CC_0))$ and 
thus the quantum group $G_\nn(\CC_0)$ does not depend on the particular choice 
of duality partitions $\sigma_{\circ\bullet}, \sigma^{-1}_{\bullet\circ} \in \langle\CC_0\rangle$.
However, we first need the following lemma.

\begin{lemma}\label{lemma:Tp-sub-intertwiners}
Let $\CC_0 \subseteq \PP^{(m)}$ be a set of 
$\nn$-graded spatial partitions generating a rigid category of spatial partitions $\CC := \angles{\CC_0}$ and
consider the quantum group $G_{\nn}(\CC_0)$. Then 
\[
    \Span\big\{ T^{(\nn)}_p \mid p \in \CC(x, y) \big\}
    \subseteq 
    \Hom(u^{x}, u^{y})
    \quad 
    \forall x, y \in \colors.
\]
\end{lemma}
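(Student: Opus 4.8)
The plan is to upgrade the defining intertwiner relations of $G_\nn(\CC_0)$, which only concern the generators in $\CC_0$, to all of $\CC = \angles{\CC_0}$ by exploiting that the intertwiner spaces of $G_\nn(\CC_0)$ already form a monoidal $*$-category and that $p \mapsto T_p^{(\nn)}$ is almost functorial. Since each $\Hom(u^x, u^y)$ is a linear subspace of $B(V^{\otimes x}, V^{\otimes y})$, it suffices to establish the pointwise statement $T_p^{(\nn)} \in \Hom(u^x, u^y)$ for every $p \in \CC(x, y)$; the inclusion of the spans then follows automatically. To this end I would set
\[
  \mathcal{S} := \big\{ p \in \PP^{(m)}(x, y) \mid x, y \in \colors,\ T_p^{(\nn)} \in \Hom(u^x, u^y) \big\}
\]
and show that $\mathcal{S}$ is a category of spatial partitions containing $\CC_0$.

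First I would treat the base cases. The containment $\CC_0 \subseteq \mathcal{S}$ is immediate: relation~(2) of \Cref{def:generator-presentation} is exactly the equation $T_p^{(\nn)} u^x = u^y T_p^{(\nn)}$, i.e.\ $T_p^{(\nn)} \in \Hom(u^x, u^y)$ for all $p \in \CC_0 \cap \PP^{(m)}(x, y)$. For the identities, $T_{\id_\circ}^{(\nn)} = \id_V$ and $T_{\id_\bullet}^{(\nn)} = \id_{\overline{V}}$ are intertwiners of $u^\circ$ and $u^\bullet$ respectively by property~(1) of \Cref{prop:reprs-monoidal-cat}, so $\id_\circ, \id_\bullet \in \mathcal{S}$.

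The main step is closure of $\mathcal{S}$ under the three category operations, obtained by combining \Cref{prop:Tp-ops} with the monoidal $*$-structure of the representation category in \Cref{prop:reprs-monoidal-cat}; the crucial compatibility here is that concatenation of colours matches the tensor product of representations, $u^{xy} = u^x \otop u^y$. For tensor products, $T_p^{(\nn)} \in \Hom(u^x, u^y)$ and $T_q^{(\nn)} \in \Hom(u^w, u^z)$ give $T_p^{(\nn)} \otimes T_q^{(\nn)} \in \Hom(u^{xw}, u^{yz})$ by property~(5), and this operator equals $T_{p \otimes q}^{(\nn)}$ by \Cref{prop:Tp-ops}. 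For involutions, property~(4) together with ${(T_p^{(\nn)})}^* = T_{p^*}^{(\nn)}$ yields $p^* \in \mathcal{S}$. For compositions, $T_q^{(\nn)} \in \Hom(u^x, u^y)$ and $T_p^{(\nn)} \in \Hom(u^y, u^z)$ give $T_p^{(\nn)} T_q^{(\nn)} \in \Hom(u^x, u^z)$ by property~(3); since $T_{pq}^{(\nn)} = N^{-\alpha} T_p^{(\nn)} T_q^{(\nn)}$ differs from this only by the scalar arising from the removed loops, and $\Hom(u^x, u^z)$ is a linear subspace, also $pq \in \mathcal{S}$. Hence $\mathcal{S}$ is closed under compositions, tensor products and involutions and contains $\id_\circ, \id_\bullet$, so it is a category of spatial partitions.

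Since $\mathcal{S}$ is a category of spatial partitions containing $\CC_0$ and $\CC = \angles{\CC_0}$ is the smallest such category, I would conclude $\CC \subseteq \mathcal{S}$, which is precisely the assertion. I do not expect a serious obstacle: the argument is a routine verification that the intertwiner condition is preserved by the partition operations, and the only points needing care are that the loop scalar $N^{-\alpha}$ appearing in \Cref{prop:Tp-ops} is harmless because $\Hom(u^x, u^z)$ is closed under scalars, and that $T_p^{(\nn)}$ is defined on all of $\CC$ — which holds because $\nn$-gradedness is preserved by compositions, tensor products and involutions, so every partition in $\CC = \angles{\CC_0}$ is again $\nn$-graded.
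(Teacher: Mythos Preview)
Your proposal is correct and follows essentially the same approach as the paper: both arguments combine \Cref{prop:Tp-ops} with \Cref{prop:reprs-monoidal-cat} to show that the set of partitions $p$ with $T_p^{(\nn)}$ an intertwiner is closed under the category operations and contains $\CC_0$, hence contains $\CC$. Your version is simply more explicit, introducing the set $\mathcal{S}$ and verifying the base partitions and $\nn$-gradedness separately, whereas the paper compresses this into a single inductive sentence.
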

\begin{proof}
\Cref{prop:reprs-monoidal-cat} and \Cref{prop:Tp-ops} imply that 
$T^{(\nn)}_{p \otimes q}$, $T^{(\nn)}_{p^*}$ and $T^{(\nn)}_{pq}$
are also intertwiners of $G_{\nn}(\CC_0)$ for all (composable) $p, q \in \CC_0$. Thus, we obtain inductively that
$T^{(\nn)}_p$ is an intertwiner for all $p \in \CC$.
The statement follows by applying the linearity of intertwiner spaces from \Cref{prop:reprs-monoidal-cat}.
\end{proof}

Now, consider the quantum group $G_{\nn}(\CC_0)$ and assume $\CC := \langle \CC_0\rangle$
contains a different pair of duality partitions $r := \tau_{\circ\bullet}$ and $s := \tau^{-1}_{\bullet\circ}$ for $\tau \in S_m$.
Then the linear maps $R := T^{(\nn)}_r$ and $S := T^{(\nn)}_s$ are intertwiners by the previous proposition
and the proof of \Cref{prop:spatial-sub-U} shows that $u^\bullet = \overline{F}^{(\nn)}_{\tau} \overline{u} {\big(\overline{F}^{(\nn)}_{\tau}\big)}^{-1}$.
Therefore, $u^\bullet$ and the $C^*$-algebra $C(G_{\nn}(\CC_0))$ do not depend on the particular choice of duality partitions
in \Cref{def:generator-presentation}.

If $\CC_0 = \CC$ is already a category of spatial partitions, then the notation $G_\nn(\CC_0)$ of the previous definition
and $G_\nn(\CC)$ of \Cref{thm:spatial-tannaka-krein} overlap.
However, the following proposition shows that both quantum groups coincide in this case and that 
more generally $G_{\nn}(\langle \CC_0 \rangle)$ and $G_{\nn}(\CC_0)$ agree.

\begin{proposition}\label{prop:C-C0-iso}
Let $\CC_0 \subseteq \PP^{(m)}$ be a set of 
$\nn$-graded spatial partitions generating a rigid category of spatial partitions $\CC := \angles{\CC_0}$.
Then the compact matrix quantum groups 
$G_{\nn}(\CC)$ and $G_{\nn}(\CC_0)$
from \Cref{thm:spatial-tannaka-krein} and \Cref{def:generator-presentation} are isomorphic.
\end{proposition}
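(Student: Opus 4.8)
The plan is to exhibit mutually inverse unital $*$-homomorphisms between $C(G_{\nn}(\CC))$ and $C(G_{\nn}(\CC_0))$ that both fix the fundamental matrix $u$. Both quantum groups have fundamental representation given by a matrix $u = {(u^\ii_\jj)}_{\ii,\jj\in[\nn]}$ on $\C^\nn$, and both $C^*$-algebras are generated by the entries of $u$; hence once we have such homomorphisms in both directions, they are automatically inverse to one another (they act as the identity on generators), and the desired isomorphism of compact matrix quantum groups follows. Throughout I would use that $\CC_0 \subseteq \CC := \langle\CC_0\rangle$ and that the duality partitions $\sigma_{\circ\bullet}, \sigma^{-1}_{\bullet\circ}$ determining $u^\bullet$ lie in $\CC$.

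First I would construct a surjection $\varphi\colon C(G_{\nn}(\CC)) \to C(G_{\nn}(\CC_0))$. By \Cref{lemma:Tp-sub-intertwiners}, in $G_{\nn}(\CC_0)$ every $T^{(\nn)}_p$ with $p\in\CC(x,y)$ is an intertwiner, so $\Span\{T^{(\nn)}_p \mid p\in\CC(x,y)\} \subseteq \Hom(u^x,u^y)$. By \Cref{thm:spatial-tannaka-krein}, this left-hand side is exactly $\Hom(u^x,u^y)$ for $G_{\nn}(\CC)$. Taking $Q = \id_{\C^\nn}$, the universal property of $G_{\nn}(\CC)$ stated in the remark following \Cref{thm:spatial-tannaka-krein} then shows that $G_{\nn}(\CC_0)$ is a subgroup of $G_{\nn}(\CC)$ via $u^\circ \mapsto u^\circ$, which is precisely a surjective unital $*$-homomorphism $\varphi$ with $\varphi(u) = u$.

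For the reverse map I would invoke the universal property of the $C^*$-algebra $C(G_{\nn}(\CC_0))$ from \Cref{def:generator-presentation}: it suffices to check that the fundamental matrix $u$ of $G_{\nn}(\CC)$ satisfies the defining relations of $G_{\nn}(\CC_0)$. The unitarity of $u^\circ = u$ is immediate, and the intertwiner relations $T^{(\nn)}_p u^x = u^y T^{(\nn)}_p$ for $p\in\CC_0$ hold because $\CC_0\subseteq\CC$ and all such $T^{(\nn)}_p$ are intertwiners of $G_{\nn}(\CC)$ by \Cref{thm:spatial-tannaka-krein}. The remaining relation is that $u^\bullet := \overline{F}^{(\nn)}_\sigma\,\overline{u}\,{(\overline{F}^{(\nn)}_\sigma)}^{-1}$ be unitary; here \Cref{prop:spatial-sub-U} enters, as it identifies the conjugate representation of $G_{\nn}(\CC)$ with exactly this operator and guarantees its unitarity. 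This yields a unital $*$-homomorphism $\psi\colon C(G_{\nn}(\CC_0))\to C(G_{\nn}(\CC))$ with $\psi(u)=u$, and $\varphi,\psi$ are then mutually inverse.

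The main obstacle is less a genuine difficulty than a point requiring care: ensuring that the two a priori different presentations of the conjugate representation agree, so that the unitarity relation on $u^\bullet$ in \Cref{def:generator-presentation} is genuinely satisfied in $G_{\nn}(\CC)$. This is handled by \Cref{prop:spatial-sub-U} together with the observation that the duality partitions defining $u^\bullet$ belong to $\CC = \langle\CC_0\rangle$. The other conceptual ingredient is that the relations coming from the generators $\CC_0$ already force all intertwiner relations of the generated category $\CC$, which is exactly the content of \Cref{lemma:Tp-sub-intertwiners}.
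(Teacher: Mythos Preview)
Your proposal is correct and follows essentially the same approach as the paper: both directions use the same ingredients (\Cref{lemma:Tp-sub-intertwiners} together with the universal property of $G_{\nn}(\CC)$ for one inclusion, and $\CC_0\subseteq\CC$ together with \Cref{prop:spatial-sub-U} and the universal property of $C(G_{\nn}(\CC_0))$ for the other). The paper's proof is simply a terser version of what you wrote.
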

\begin{proof}
Denote with $u$ and $w$ the fundamental representations of $G_\nn(\CC_0)$ and 
$G_\nn(\CC)$ respectively.
Then \Cref{lemma:Tp-sub-intertwiners}
and the universal property of $G_{\nn}(\CC)$ yield the inclusion $G_{\nn}(\CC_0) \subseteq G_\nn(\CC)$ via $w \mapsto u$.
Conversely, $\CC_0 \subseteq \CC$ and \Cref{prop:spatial-sub-U} show that 
$w$ satisfies all defining relations of $u$.
Thus, the universal property of $C(G_{\nn}(\CC_0))$ yields the 
inverse inclusion $G_{\nn}(\CC) \subseteq G_{\nn}(\CC_0)$ via $u \mapsto w$. 
Therefore, both quantum groups are isomorphic.
\end{proof}

Using the description of spatial partition quantum groups as universal $C^*$-algebras, we can 
now construct the free orthogonal quantum groups $O^+(F^{(\nn)}_\sigma)$ from spatial partitions.

\begin{proposition}\label{prop:universal-are-spatial}
Let $\sigma \in S_m$ be a $\nn$-graded permutation
and $\CC := \big\langle\sigma_{\circ\bullet}, \sigma^{-1}_{\bullet\circ}, {\partIdBW}^{(m)}\big\rangle$.
Then $G_{\nn}(\CC)$ and $O^+(F^{(\nn)}_\sigma)$ are isomorphic.
\end{proposition}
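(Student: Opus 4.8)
The plan is to show that the universal $C^*$-algebras defining $G_{\nn}(\CC)$ and $O^+(F^{(\nn)}_\sigma)$ are presented by the same generators and relations, so that the identity assignment $u \mapsto u$ on the matrix coefficients extends to a $*$-isomorphism intertwining the two fundamental representations. Since $\CC = \langle \CC_0 \rangle$ for $\CC_0 := \{\sigma_{\circ\bullet}, \sigma^{-1}_{\bullet\circ}, {\partIdBW}^{(m)}\}$, and $\CC$ is $\nn$-graded and rigid (it contains the duality partitions $\sigma_{\circ\bullet}, \sigma^{-1}_{\bullet\circ}$ by \Cref{prop:conj-eq-solutions}), \Cref{prop:C-C0-iso} lets me replace $G_{\nn}(\CC)$ by $G_{\nn}(\CC_0)$ and work with the finite presentation of \Cref{def:generator-presentation}: the relations that $u^\circ := u$ and $u^\bullet := \overline{F}^{(\nn)}_\sigma\,\overline u\,(\overline{F}^{(\nn)}_\sigma)^{-1}$ be unitary, together with the intertwiner relations $T^{(\nn)}_p u^x = u^y T^{(\nn)}_p$ for the three generators $p \in \CC_0$.

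The central step is to identify the operator attached to ${\partIdBW}^{(m)} \in \PP^{(m)}(\bullet,\circ)$. Reading off $\delta_p$, its block structure matches each black upper index with the white lower index on the same level, so $T^{(\nn)}_{{\partIdBW}^{(m)}} = \iota$, the canonical isomorphism $\overline{\C^{\nn}} \to \C^{\nn}$, $\iota(\overline{e_\ii}) = e_\ii$, from \Cref{def:universal-qgs}. Hence the intertwiner relation for ${\partIdBW}^{(m)}$ reads $\iota\, u^\bullet = u^\circ \iota$. Substituting the defining formula for $u^\bullet$ and using the elementary identity $\iota\,\overline{F}^{(\nn)}_\sigma = F^{(\nn)}_\sigma\,\iota$ (both send $\overline{e_\ii}$ to $e_{\rho(\ii)}$, where $\rho$ is the basis permutation of $[\nn]$ induced by $F^{(\nn)}_\sigma$), this becomes exactly the orthogonality relation $u = (F^{(\nn)}_\sigma\iota)\,\overline u\,(F^{(\nn)}_\sigma\iota)^{-1}$ defining $O^+(F^{(\nn)}_\sigma)$.

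It then remains to match the remaining relations in both directions. The intertwiner relations for $\sigma_{\circ\bullet}$ and $\sigma^{-1}_{\bullet\circ}$ assert $R := T^{(\nn)}_{\sigma_{\circ\bullet}} \in \Hom(1, u^\circ \otop u^\bullet)$ and $S := T^{(\nn)}_{\sigma^{-1}_{\bullet\circ}} \in \Hom(1, u^\bullet \otop u^\circ)$; by the equivalences established in the proof of \Cref{prop:u-bullet-impl-conj} (applied with $F = \overline{F}^{(\nn)}_\sigma$, as provided by \Cref{prop:spatial-sub-U}) these are equivalent to $u^\circ(u^\circ)^* = 1$ and $u^\bullet(u^\bullet)^* = 1$, hence redundant once $u^\circ$ and $u^\bullet$ are required to be unitary. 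Conversely, in $O^+(F^{(\nn)}_\sigma)$ the orthogonality relation gives $u^\bullet = \iota^{-1} u \iota$, so $u^\bullet$ is automatically unitary there. Thus the two presentations impose identical relations, and I will conclude that $u \mapsto u$ yields mutually inverse $*$-isomorphisms between $C(G_{\nn}(\CC_0))$ and $A_o(F^{(\nn)}_\sigma)$, proving that $G_{\nn}(\CC)$ and $O^+(F^{(\nn)}_\sigma)$ are isomorphic.

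I expect the main obstacle to be the bookkeeping in the central step: correctly identifying $T^{(\nn)}_{{\partIdBW}^{(m)}}$ with $\iota$ and carrying the index and conjugation juggling in $\iota\,\overline{F}^{(\nn)}_\sigma = F^{(\nn)}_\sigma\,\iota$, since a misplaced conjugate or inverse there would spoil the match with the specific parameter $F^{(\nn)}_\sigma$ of \Cref{def:universal-qgs}. The redundancy of the duality relations and the unitarity of $u^\bullet$ are then immediate from the earlier propositions.
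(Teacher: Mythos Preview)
Your proposal is correct and follows essentially the same approach as the paper. Both arguments hinge on identifying $T^{(\nn)}_{{\partIdBW}^{(m)}}$ with $\iota$, using the identity $\iota\,\overline{F}^{(\nn)}_\sigma = F^{(\nn)}_\sigma\,\iota$ to match the intertwiner relation $\iota u^\bullet = u^\circ\iota$ with the defining relation of $O^+(F^{(\nn)}_\sigma)$, and invoking \Cref{prop:u-bullet-impl-conj} to handle the duality partitions; the only cosmetic difference is that the paper phrases the conclusion as two mutual inclusions via universal properties, whereas you phrase it as an identification of presentations.
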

\begin{proof}
Denote with $u =: u^\circ$ the fundamental
representation of $G_{\nn}(\CC)$ and with $w =: w^\circ$ the 
fundamental representation of $O^+(F^{(\nn)}_\sigma)$.
Further, denote with $\iota \colon \overline{\C^{\nn}} \to \C^{\nn}$ the linear isomorphism 
given by $\iota(\overline{e_i}) = e_i$ for all $i \in [\nn]$.
Then $u$ is a unitary
and \Cref{prop:spatial-sub-U} yields that its conjugate representation $u^\bullet$ 
is given by $u^\bullet = \overline{F}_\sigma^{(\nn)} \, \overline{u} \, {\big(\overline{F}_\sigma^{(\nn)}\big)}^{-1}$.
Since $\iota = T^{(\nn)}_{{\partIdBW}^{(m)}} \in \Hom(u^\bullet, u^\circ)$, we have $\iota u^\bullet = u^\circ \iota$, which implies 
\[
  u = u^\circ = \iota u^\bullet \iota^{-1} 
  = {\big(\iota \overline{F}_\sigma^{(\nn)}\big)} \, \overline{u} \, {\big(\iota \overline{F}_\sigma^{(\nn)}\big)}^{-1}
  = {\big({F}_\sigma^{(\nn)} \iota\big)} \, \overline{u} \, {\big({F}_\sigma^{(\nn)} \iota\big)}^{-1}.
\]
Thus, the universal property of $C(O^+(F^{(\nn)}_\sigma))$ yields
the inclusion $G_{\nn}(\CC) \subseteq O^+(F^{(\nn)}_\sigma)$ via $w \mapsto u$.

Conversely, define $w^\bullet := \iota^{-1} w \iota$, which is unitary since both
$w$ and $\iota$ are unitary. Further, we have
$\iota w^\bullet = \iota \iota^{-1} w \iota = w^\circ \iota$,
which shows $T^{(\nn)}_{{\partIdBW}^{(m)}} = \iota \in \Hom(w^\bullet, w^\circ)$.
By the definition of $O^+(F_\sigma^{(\nn)})$ and the argument before, we also have
\[
  w^\bullet = \iota^{-1} w \iota = \big( \iota^{-1} {F}_\sigma^{(\nn)} \iota \big) \, \overline{w} \, {\big(\iota^{-1} {F}_\sigma^{(\nn)} \iota \big)}^{-1} 
  = \overline{F}_\sigma^{(\nn)} \, \overline{w^\circ} \, {\big( \overline{F}_\sigma^{(\nn)} \big)}^{-1}. 
\]
Hence, \Cref{prop:u-bullet-impl-conj} shows that $w^\circ$ and $w^\bullet$ are conjugate via intertwiners
$R \in \Hom(1, w^{\circ\bullet})$ and $S \in \Hom(1, w^{\bullet\circ})$ given by 
\[
  R^{\ii,\jj} = {(\overline{F}_\sigma^{(\nn)})}^\jj_\ii, \quad 
  S^{\ii,\jj} = {({F}_{\sigma^{-1}}^{(\nn)})}^\jj_\ii \qquad
  \forall \ii, \jj \in [\nn]. 
\]
As in proof of \Cref{prop:spatial-sub-U}, this is equivalent to $R = T^{(\nn)}_{\sigma_{\circ\bullet}}$
and $S = T^{(\nn)}_{\sigma_{\bullet\circ}^{-1}}$. Therefore,
$w$ satisfies all the defining relations of $C(G_\nn(\CC_0))$ with $\CC_0 := \{\sigma_{\circ\bullet}, \sigma^{-1}_{\bullet\circ}, {\partIdBW}^{(m)}\}$.
Since $G_\nn(\CC_0) = G_\nn(\CC)$ by \Cref{prop:C-C0-iso},
the universal property of $C(G_{\nn}(\CC_0))$ yields 
the inverse inclusion $O^+(F^{(\nn)}_\sigma) \subseteq G_{\nn}(\CC)$ via $u \mapsto w$.
\end{proof}

Note that $O^+(F)$ and $O^+(QFQ^T)$ are isomorphic for 
any unitary $Q$, see for example~\cite{timmermann08}. 
Since $F_\tau^{(\nn)} F_\sigma^{(\nn)} {\big(F_\tau^{(\nn)}\big)}^T = F_{\tau \sigma \tau^{-1}}^{(\nn)}$, 
this implies that $O^+(F^{(\nn)}_{\sigma})$ and $O^+(F^{(\nn)}_{\tau\sigma\tau^{-1}})$
are isomorphic for all $\sigma, \tau \in S_m$. 
Thus, $O^+(F^{(\nn)}_{\sigma})$ depends only on the conjugacy class of 
$\sigma$.

\subsection{Permuting levels}\label{sec:permuting-levels}

In \Cref{sec:functor-perm}, we introduced the functor 
$\Perm_{\sigma,\tau}$ that permutes the levels of a spatial partition
depending on the color of the points.
In the following, we show that $\Perm_{\sigma,\tau}$ leave the quantum groups
$G_{\nn}(\CC)$ invariant. This implies that our 
new duality partitions yield the same class of spatial partition quantum groups as defined by Cébron-Weber in~\cite{cebron16}. 

The main work to prove this result will be done in the following
technical lemma about the linear maps 
$T_{\Perm_{\sigma,\tau}(p)}^{(\nn)}$.

\begin{lemma}\label{lem:perm-proof}
Let $\sigma, \tau \in S_m$ be $\nn$-graded permutations.
Consider a compact matrix quantum group with fundamental
representation $u^\circ$ on $\C^{\nn}$ and unitary representation $u^\bullet$ on $\overline{\C^{\nn}}$.
Define the unitary representations
$\hat{u}^\circ := u^\circ$ and $\hat{u}^\bullet := {\big(\overline{F}^{(\nn)}_{\tau\sigma^{-1}}\big)}^{-1} \, u^\bullet \, \overline{F}^{(\nn)}_{\tau\sigma^{-1}}$.
Then
\[
  T_{\Perm_{\sigma,\tau}(p)}^{(\nn)} \in \Hom(u^x, u^y)
  \ \Longleftrightarrow \ 
  {\big(F^{(\nn)}_\sigma\big)}^{\otimes y} \, T_p^{(\nn)} \, {\big({\big(F^{(\nn)}_\sigma\big)}^{-1}\big)}^{\otimes x} \in \Hom(\hat{u}^x, \hat{u}^y)
  \qquad 
\]
for all $p \in \PP^{(m)}(x, y)$.
\end{lemma}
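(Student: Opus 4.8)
The plan is to reduce the statement to the combinatorial identity $\Perm_{\sigma,\tau}(p) = q_{\sigma,\tau}^y \cdot p \cdot {(q_{\sigma,\tau}^x)}^{-1}$ of \Cref{def:functor-perm} and to transport it to operators using the near-functoriality of $p \mapsto T_p^{(\nn)}$ from \Cref{prop:Tp-ops}. First I would identify the operators attached to the permutation partitions $q_{\sigma,\tau}^\circ = \sigma^\circ_\circ$ and $q_{\sigma,\tau}^\bullet = \tau^\bullet_\bullet$. A computation of $\delta$-symbols identical to the one carried out in the proof of \Cref{prop:spatial-sub-U} gives $T^{(\nn)}_{\sigma^\circ_\circ} = F^{(\nn)}_\sigma$, and since $\tau^\bullet_\bullet$ has the same block structure read off in the conjugate bases with real $0/1$ entries, one gets $T^{(\nn)}_{\tau^\bullet_\bullet} = \overline{F}^{(\nn)}_\tau$. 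Writing $Q^x := T^{(\nn)}_{q_{\sigma,\tau}^x}$ and using \Cref{prop:Tp-ops}\,(1) together with $q_{\sigma,\tau}^{xy} = q_{\sigma,\tau}^x \otimes q_{\sigma,\tau}^y$, the operator $Q^x$ is then the tensor product carrying $F^{(\nn)}_\sigma$ on each $\circ$-slot of $x$ and $\overline{F}^{(\nn)}_\tau$ on each $\bullet$-slot; in particular each $Q^x$ is unitary.

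Next I would apply \Cref{prop:Tp-ops} to the composition. Because the partitions $q_{\sigma,\tau}^x$ are permutation partitions consisting only of through-blocks, composing them with $p$ isolates no block and hence removes no loops, so the factor $N^\alpha$ in \Cref{prop:Tp-ops}\,(3) is trivial. Combined with \Cref{lem:inv-perm}, which gives ${(q_{\sigma,\tau}^x)}^{-1} = {(q_{\sigma,\tau}^x)}^*$, and \Cref{prop:Tp-ops}\,(2), this yields
\[
  T^{(\nn)}_{\Perm_{\sigma,\tau}(p)} = Q^y \, T_p^{(\nn)} \, {(Q^x)}^{-1}.
\]
The crucial algebraic observation is the factorisation $Q^x = D_x \cdot {(F^{(\nn)}_\sigma)}^{\otimes x}$, where $D := \overline{F}^{(\nn)}_{\tau\sigma^{-1}}$ and $D_x$ acts by $D$ on each $\bullet$-slot of $x$ and by the identity on each $\circ$-slot. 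Indeed, the two operators agree on $\circ$-slots, while on a $\bullet$-slot one has $\overline{F}^{(\nn)}_\tau = \overline{F}^{(\nn)}_{\tau\sigma^{-1}} \, \overline{F}^{(\nn)}_\sigma$, using that $\sigma \mapsto F^{(\nn)}_\sigma$ is a representation.

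Finally, I would set $\tilde{T} := {(F^{(\nn)}_\sigma)}^{\otimes y} \, T_p^{(\nn)} \, {({(F^{(\nn)}_\sigma)}^{-1})}^{\otimes x}$, which is exactly the operator on the right-hand side of the claim. Substituting the factorisation into the displayed formula and cancelling the $F$-factors gives $T^{(\nn)}_{\Perm_{\sigma,\tau}(p)} = D_y \, \tilde{T} \, D_x^{-1}$. On the representation side, the tensor-conjugation rule for $\otop$ turns the defining relations $\hat{u}^\circ = u^\circ$ and $\hat{u}^\bullet = D^{-1} u^\bullet D$ into $\hat{u}^x = D_x^{-1} \, u^x \, D_x$ (with $D_x$ understood tensored with $1_{C(G)}$). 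The intertwiner relation $T^{(\nn)}_{\Perm_{\sigma,\tau}(p)} \, u^x = u^y \, T^{(\nn)}_{\Perm_{\sigma,\tau}(p)}$ then reads $D_y \tilde{T} D_x^{-1} u^x = u^y D_y \tilde{T} D_x^{-1}$, and multiplying by $D_y^{-1}$ on the left and $D_x$ on the right converts it to $\tilde{T} \hat{u}^x = \hat{u}^y \tilde{T}$, i.e.\ $\tilde{T} \in \Hom(\hat{u}^x, \hat{u}^y)$; since every $D$ and $F$ is invertible, the two conditions are equivalent. The main obstacle will be the colour bookkeeping: verifying $T^{(\nn)}_{\tau^\bullet_\bullet} = \overline{F}^{(\nn)}_\tau$ in the conjugate basis, confirming that the twist $D$ appears only on $\bullet$-slots and carries precisely the combination $\tau\sigma^{-1}$, and checking that $\hat{u}^x = D_x^{-1} u^x D_x$ is compatible with the $\otop$-product. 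Once these are pinned down, the remaining steps are purely formal.
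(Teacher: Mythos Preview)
Your proposal is correct and follows essentially the same route as the paper: compute $T^{(\nn)}_{q_{\sigma,\tau}^\circ}=F^{(\nn)}_\sigma$ and $T^{(\nn)}_{q_{\sigma,\tau}^\bullet}=\overline{F}^{(\nn)}_\tau$, deduce $T^{(\nn)}_{\Perm_{\sigma,\tau}(p)}=Q^y\,T_p^{(\nn)}\,(Q^x)^{-1}$, then use the factorisation $Q^x=D_x\cdot(F^{(\nn)}_\sigma)^{\otimes x}$ (which the paper writes as $Q^x_{\sigma,\tau}=Q^x_{\id,\tau\sigma^{-1}}\cdot Q^x_{\sigma,\sigma}$) to convert the $u$-intertwiner condition into the $\hat u$-intertwiner condition. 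Your extra care about the absence of loops in the composition and the colour-slot bookkeeping is warranted and matches what the paper leaves implicit.
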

\begin{proof}
Let $p \in \PP^{(m)}(x, y)$ and recall from \Cref{def:functor-perm}
that 
\[
  \Perm_{\tau,\rho}(p) := q^y_{\sigma,\tau} \cdot p \cdot {(q^x_{\sigma,\tau})}^{-1}.
\]
Define the linear operators $Q^z_{\sigma,\tau} := T^{(\nn)}_{q^z_{\sigma,\tau}}$ for all $z \in \colors$. Then 
\[
  {(Q^\circ_{\sigma,\tau})}^\ii_\jj = {(\delta_{q_{\sigma,\tau}^\circ})}^\jj_\ii = \delta_{j_1 i_{\sigma(1)}} \dots \delta_{j_m i_{\sigma(m)}} = {(F_\sigma^{(\nn)})}^\ii_\jj
\]
for all $\ii := (i_1, \dots, i_m), \jj := (j_1, \dots, j_m) \in [\nn]$. Hence, $Q^\circ_{\sigma,\tau} = F_\sigma^{(\nn)}$ and similarly one shows 
$Q^\bullet_{\sigma,\tau} = \overline{F}_\tau^{(\nn)}$. Since $F_\sigma^{(\nn)}$ and $\overline{F}_\tau^{(\nn)}$ come from a unitary representation, it follows
inductively that $Q^z_{\sigma, \tau}$ defines a unitary representation of $\nn$-graded permutations in $S_m \times S_m$.
Thus, we can write
\[
  T^{(\nn)}_{\Perm_{\tau,\rho}(p)} 
  = Q^y_{\sigma,\tau} \cdot T^{(\nn)}_{p} \cdot {(Q^x_{\sigma,\tau})}^{-1}
  = {(Q^y_{\id,\tau\sigma^{-1}} \cdot Q^y_{\sigma,\sigma})} \cdot T^{(\nn)}_{p} \cdot {(Q^x_{\id,\tau\sigma^{-1}} \cdot Q^x_{\sigma,\sigma})}^{-1}.
\]
Hence, $T^{(\nn)}_{\Perm_{\tau,\rho}(p)} \in \Hom(u^x, u^y)$ if and only if
\begin{multline*}
  (Q^y_{\id,\tau\sigma^{-1}} \cdot Q^y_{\sigma,\sigma}) \cdot T^{(\nn)}_{p} \cdot {(Q^x_{\id,\tau\sigma^{-1}} \cdot Q^x_{\sigma,\sigma})}^{-1} \cdot u^x \\ 
  = u^y \cdot {(Q^y_{\id,\tau\sigma^{-1}} \cdot Q^y_{\sigma,\sigma})} \cdot T^{(\nn)}_{p} \, {(Q^x_{\id,\tau\sigma^{-1}} \cdot Q^x_{\sigma,\sigma})}^{-1},
\end{multline*}
which is again equivalent to 
\begin{multline*}
  Q^y_{\sigma,\sigma} \cdot T^{(\nn)}_{p}  \cdot {(Q^x_{\sigma,\sigma})}^{-1}  \cdot {(Q^x_{\id,\tau\sigma^{-1}})}^{-1}  \cdot u^x  \cdot Q^x_{\id,\tau\sigma^{-1}} \\ 
  =  {(Q^y_{\id,\tau\sigma^{-1}})}^{-1} \cdot u^y \cdot Q^y_{\id,\tau\sigma^{-1}} \cdot {Q^y_{\sigma,\sigma}} \cdot T^{(\nn)}_{p} \cdot {(Q^x_{\sigma,\sigma})}^{-1}.
\end{multline*}
Since 
\[
  {(F_{\sigma}^{(\nn)})}^{\otimes z} = Q^z_{\sigma,\sigma}, \quad
  \hat{u}^z = {(Q^z_{\id,\tau\sigma^{-1}})}^{-1} u^z Q^z_{\id,\tau\sigma^{-1}}
  \qquad \forall z \in \colors,
\]
we conclude that
\[
  T_{\Perm_{\sigma,\tau}(p)}^{(\nn)} \in \Hom(u^x, u^y)
  \ \Longleftrightarrow \ 
  {\big(F^{(\nn)}_\sigma\big)}^{\otimes y} \, T_p^{(\nn)} \, {\big({\big(F^{(\nn)}_\sigma\big)}^{-1}\big)}^{\otimes x} \in \Hom(\hat{u}^x, \hat{u}^y).
\]
\end{proof}

Using the previous lemma, we can now show that the spatial partition functor
$\Perm_{\sigma,\tau}$ leaves the quantum groups $G_{\nn}(\CC)$ invariant.

\begin{theorem}
Let $\CC \subseteq \PP^{(m)}$ be a $\nn$-graded rigid category of spatial partitions
and $\sigma, \tau \in S_m$ be $\nn$-graded permutations. Then $G_{\nn}(\CC)$ and $G_{\nn}(\Perm_{\sigma,\tau}(\CC))$ are isomorphic.
\end{theorem}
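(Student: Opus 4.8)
The plan is to realize $G_{\nn}(\Perm_{\sigma,\tau}(\CC))$ as $G_{\nn}(\CC)$ with its fundamental representation rotated by the unitary $F^{(\nn)}_\sigma$, and to read off the matching intertwiner spaces from the operator identity underlying \Cref{lem:perm-proof}. Write $\DD := \Perm_{\sigma,\tau}(\CC)$; by the preceding proposition $\DD$ is again an $\nn$-graded rigid category, so $H := G_{\nn}(\DD)$ exists by \Cref{thm:spatial-tannaka-krein}, with fundamental representation on $\C^{\nn}$ and $\Hom(w^x, w^y) = \Span\{ T^{(\nn)}_q \mid q \in \DD(x,y)\}$. Since $\Perm_{\sigma,\tau}$ preserves colors and restricts to a bijection $\CC(x,y) \to \DD(x,y)$, this space equals $\Span\{ T^{(\nn)}_{\Perm_{\sigma,\tau}(p)} \mid p \in \CC(x,y)\}$.

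Next I would introduce the operators $Q^z_{\sigma,\tau} := T^{(\nn)}_{q^z_{\sigma,\tau}}$ from the proof of \Cref{lem:perm-proof}. These form a unitary representation of $\nn$-graded permutations with $Q^\circ_{\sigma,\tau} = F^{(\nn)}_\sigma$ and $Q^\bullet_{\sigma,\tau} = \overline{F}^{(\nn)}_\tau$, and---using \Cref{prop:Tp-ops} together with the fact that composing with permutation partitions removes no loops---they satisfy $T^{(\nn)}_{\Perm_{\sigma,\tau}(p)} = Q^y_{\sigma,\tau}\, T^{(\nn)}_p\, (Q^x_{\sigma,\tau})^{-1}$ for every $p \in \PP^{(m)}(x,y)$. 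Denoting by $u^\circ, u^\bullet$ the representations of $G := G_{\nn}(\CC)$, I then set $\tilde u^z := Q^z_{\sigma,\tau}\, u^z\, (Q^z_{\sigma,\tau})^{-1}$, so that $\tilde u^\circ = F^{(\nn)}_\sigma\, u^\circ\, (F^{(\nn)}_\sigma)^{-1}$ and $\tilde u^\bullet = \overline{F}^{(\nn)}_\tau\, u^\bullet\, (\overline{F}^{(\nn)}_\tau)^{-1}$. Both are unitary, $\tilde u^\bullet$ is conjugate to $\tilde u^\circ$ (conjugate the duality morphisms of $G$ by the $Q^z_{\sigma,\tau}$), and $(C(G), \tilde u^\circ)$ with conjugate $\tilde u^\bullet$ is a compact matrix quantum group on the same algebra $C(G)$. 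Conjugating the category by $Q^z_{\sigma,\tau}$ gives
\[
  \Hom(\tilde u^x, \tilde u^y) = Q^y_{\sigma,\tau}\,\Hom(u^x,u^y)\,(Q^x_{\sigma,\tau})^{-1} = \Span\{ T^{(\nn)}_{\Perm_{\sigma,\tau}(p)} \mid p \in \CC(x,y)\},
\]
which is exactly the intertwiner data of $H$.

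To finish, I would note two facts. First, $(C(G),\tilde u^\circ)$ is isomorphic to $G$ via $Q = F^{(\nn)}_\sigma$: an isomorphism of compact matrix quantum groups constrains only the fundamental representation, and the identity on $C(G)$ together with $\tilde u^\circ = F^{(\nn)}_\sigma u^\circ (F^{(\nn)}_\sigma)^{-1}$ realizes it. Second, $(C(G),\tilde u^\circ)$, equipped with the conjugate $\tilde u^\bullet$, is a compact matrix quantum group on $\C^{\nn}$ whose mixed intertwiner spaces are precisely $\Span\{T^{(\nn)}_q \mid q\in\DD(x,y)\}$; since $C(G)$ is the maximal $C^*$-completion of its coefficient algebra, the uniqueness in \Cref{thm:spatial-tannaka-krein} identifies it with $G_{\nn}(\DD)$. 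Chaining the two gives $G_{\nn}(\CC) \cong G_{\nn}(\Perm_{\sigma,\tau}(\CC))$. The crux---and the reason \Cref{lem:perm-proof} isolates a modified conjugate representation---is that $Q^z_{\sigma,\tau}$ is \emph{not} a single change of basis: it acts by $F^{(\nn)}_\sigma$ on white strands but by $\overline{F}^{(\nn)}_\tau$ on black strands, so the conjugate representations of the two quantum groups genuinely differ, by the factor $\overline{F}^{(\nn)}_{\tau\sigma^{-1}}$. The isomorphism can therefore match only the fundamental representations, while Woronowicz Tannaka--Krein uniqueness, which sees every $\Hom(u^x,u^y)$ including the mixed ones, still pins down $G_{\nn}(\DD)$; confirming that $\tilde u^\bullet$ is exactly the conjugate representation forced by the duality partitions of $\DD$ is the one point that needs care.
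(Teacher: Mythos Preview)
Your argument is correct and rests on the same operator identity the paper isolates, namely $T^{(\nn)}_{\Perm_{\sigma,\tau}(p)} = Q^y_{\sigma,\tau}\,T^{(\nn)}_p\,(Q^x_{\sigma,\tau})^{-1}$ with $Q^\circ_{\sigma,\tau}=F^{(\nn)}_\sigma$ and $Q^\bullet_{\sigma,\tau}=\overline{F}^{(\nn)}_\tau$. The packaging differs: the paper factors $Q^z_{\sigma,\tau}=Q^z_{\id,\tau\sigma^{-1}}\cdot Q^z_{\sigma,\sigma}$ (this is the content of \Cref{lem:perm-proof}) and then runs two mutual inclusions via the universal property of $G_{\nn}(\CC)$ and $G_{\nn}(\DD)$, whereas you build the twisted pair $(\tilde u^\circ,\tilde u^\bullet)$ on $C(G)$ directly and appeal once to the uniqueness clause of Tannaka--Krein. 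Your route is a bit more conceptual and avoids the symmetric second inclusion; the paper's route sidesteps the verification you flag at the end (that $\tilde u^\bullet$ is the conjugate determined by $\DD$), since the universal property only compares $\Hom$-spaces and does not need to match the black representations explicitly. Both arguments are short once the operator identity is in hand; the substantive content is the same.
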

\begin{proof}
Define $\DD := \Perm_{\sigma,\tau}(\CC)$ and denote with $u^\circ$, $u^\bullet$ and $w^\circ$, $w^\bullet$
the fundamental representation and conjugate representation of $G_\nn(\CC)$
and $G_\nn(\DD)$ respectively. Since the functor $\Perm_{\sigma,\tau}$ is fully faithful and does not change colors, we have
\[
  \Hom(w^x, w^y) = \Span\big\{ T_p^{(\nn)} \mid p \in \DD(x, y) \big\}
                 = \Span\big\{ T_{\Perm_{\sigma,\tau}(p)}^{(\nn)} \mid p \in \CC(x, y) \big\}
\]
for all $x, y \in \colors$. Define $\hat{w}^\circ := w^\circ$ and $\hat{w}^\bullet := {\big(\overline{F}^{(\nn)}_{\tau\sigma^{-1}}\big)}^{-1} \, w^\bullet \, \overline{F}^{(\nn)}_{\tau\sigma^{-1}}$. Then \Cref{lem:perm-proof} yields
\[
   \Span\big\{ {\big(F^{(\nn)}_\sigma\big)}^{\otimes y} \, T_p^{(\nn)} \, {\big({\big(F^{(\nn)}_\sigma\big)}^{-1}\big)}^{\otimes x} \mid p \in \CC(x, y) \big\} \subseteq \Hom(\hat{w}^x, \hat{w}^y),
\]
which is equivalent to
\[
  \Hom(u^x, u^y)
  \subseteq 
  {\big({\big(F^{(\nn)}_\sigma\big)}^{-1}\big)}^{\otimes y} \cdot \Hom(\hat{w}^x, \hat{w}^y) \cdot {\big(F^{(\nn)}_\sigma\big)}^{\otimes x}.
\]
Thus, the universal property of $G_{\nn}(\CC)$ and $\widehat{w}^\circ = w^\circ$ yield the inclusion
\[
  G_{\nn}(\DD) \subseteq G_{\nn}(\CC), \quad u^\circ \mapsto {\big(F^{(\nn)}_\sigma\big)}^{-1} w^\circ F^{(\nn)}_\sigma.
\]
Since $\CC = \Perm_{\sigma^{-1},\tau^{-1}}(\DD)$, the same argument also yields the inverse inclusion
\[
  G_\nn(\CC) \subseteq G_\nn(\DD), \quad w^\circ \mapsto {\big(F^{(\nn)}_{\sigma^{-1}}\big)}^{-1} u^\circ F^{(\nn)}_{\sigma^{-1}}.
\]
Therefore, $G_\nn(\CC)$ and $G_\nn(\DD)$ are isomorphic.
\end{proof}

The previous theorem might be generally useful for determining the quantum groups 
associated with concrete categories of spatial partitions.
However, as an immediate consequence, we obtain that the class of spatial partition quantum groups 
defined with our new duality partitions agrees with the class of spatial partition quantum groups defined
by Cébron-Weber in~\cite{cebron16}.

\begin{corollary}\label{corr:assume-base-partitions}
Let $G$ be a spatial partition quantum group. Then $G$ is equivalent
to $G_{\nn}(\CC)$ for a $\nn$-graded rigid category of spatial partitions $\CC \subseteq \PP^{(m)}$
containing the duality partitions $\partPairWB^{(m)}$ and $\partPairBW^{(m)}$.
\end{corollary}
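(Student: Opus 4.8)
The plan is to reduce everything to the already-established invariance of $G_{\nn}(\CC)$ under the permutation functor $\Perm_{\sigma,\tau}$, choosing the two permutations so that the duality partitions present in $\CC$ are carried exactly to the Cébron–Weber base partitions $\partPairWB^{(m)}$ and $\partPairBW^{(m)}$.

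First I would invoke \Cref{rem:spatial-equiv-G}: since $G$ is a spatial partition quantum group, it is equivalent to $G_{\nn}(\CC)$ for some $\nn$-graded rigid category $\CC \subseteq \PP^{(m)}$. As $\CC$ is rigid it contains a pair of duality partitions, and by \Cref{prop:conj-eq-solutions} these are necessarily of the form $\rho_{\circ\bullet}$ and $\rho^{-1}_{\bullet\circ}$ for some $\rho \in S_m$. Because $\rho_{\circ\bullet} \in \CC$ and $\CC$ is $\nn$-graded, its block $\{(1,i),(2,\rho(i))\}$ forces $n_i = n_{\rho(i)}$ for every $i$; hence $\rho$ is itself $\nn$-graded, and of course so is $\id$.

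Next I would set $\DD := \Perm_{\rho,\id}(\CC)$. By the proposition showing that $\Perm_{\sigma,\tau}$ preserves $\nn$-graded rigid categories, $\DD$ is again a $\nn$-graded rigid category of spatial partitions, and by the preceding theorem asserting $G_{\nn}(\CC) = G_{\nn}(\Perm_{\sigma,\tau}(\CC))$ we get an isomorphism $G_{\nn}(\CC) = G_{\nn}(\DD)$. It remains to verify that $\DD$ contains the base partitions, i.e.\ that
\[
  \Perm_{\rho,\id}(\rho_{\circ\bullet}) = \partPairWB^{(m)}, \qquad \Perm_{\rho,\id}(\rho^{-1}_{\bullet\circ}) = \partPairBW^{(m)}.
\]
Using $\Perm_{\sigma,\tau}(p) = q^{y}_{\sigma,\tau} \cdot p \cdot (q^{x}_{\sigma,\tau})^{-1}$ from \Cref{def:functor-perm} with $q^{\circ\bullet}_{\sigma,\tau} = \sigma^{\circ}_{\circ} \otimes \tau^{\bullet}_{\bullet}$, and tracing the single block through the composition, one finds in general $\Perm_{\sigma,\tau}(\rho_{\circ\bullet}) = (\tau\rho\sigma^{-1})_{\circ\bullet}$ and, by the same bookkeeping, $\Perm_{\sigma,\tau}(\rho^{-1}_{\bullet\circ}) = (\sigma\rho^{-1}\tau^{-1})_{\bullet\circ}$ (a useful internal consistency check, since the second outcome must be the inverse of the first). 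Specializing to $\sigma = \rho$, $\tau = \id$ yields the identity permutation on both strands, which is precisely $\partPairWB^{(m)} = \id_{\circ\bullet}$ and $\partPairBW^{(m)} = \id_{\bullet\circ}$. Finally, since an isomorphism of compact matrix quantum groups is in particular an equivalence and equivalence is transitive, the equivalence $G \simeq G_{\nn}(\CC)$ together with $G_{\nn}(\CC) = G_{\nn}(\DD)$ gives that $G$ is equivalent to $G_{\nn}(\DD)$, with $\DD$ containing $\partPairWB^{(m)}$ and $\partPairBW^{(m)}$, as desired.

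The only genuine obstacle is the block-tracing identity $\Perm_{\sigma,\tau}(\rho_{\circ\bullet}) = (\tau\rho\sigma^{-1})_{\circ\bullet}$: one must keep straight which strand each of $\sigma$ and $\tau$ acts on, and in which direction the level relabelling propagates when composing $\rho_{\circ\bullet}$ (on top) with $q^{\circ\bullet}_{\sigma,\tau}$ (below). Once this conjugation pattern is pinned down, the choice $(\sigma,\tau)=(\rho,\id)$ trivializes the permutation, and the rest of the argument is purely an assembly of the quoted results.
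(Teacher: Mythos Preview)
Your proof is correct and follows the same strategy as the paper: reduce to $G_{\nn}(\CC)$, identify the duality pair as $\rho_{\circ\bullet},\rho^{-1}_{\bullet\circ}$, and apply the invariance theorem for $\Perm_{\sigma,\tau}$ to normalize these to $\partPairWB^{(m)},\partPairBW^{(m)}$. The only difference is cosmetic: the paper chooses $(\sigma,\tau)=(\id,\rho^{-1})$ whereas you choose $(\sigma,\tau)=(\rho,\id)$, and both satisfy $\tau\rho\sigma^{-1}=\id$ in your conjugation formula. Your explicit verification that $\rho$ is $\nn$-graded (needed to invoke the invariance theorem) is a detail the paper leaves implicit.
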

\begin{proof}
As discussed in \Cref{rem:spatial-equiv-G}, any spatial partition quantum group $G$ 
is equivalent to $G_\nn(\DD)$ for a $\nn$-graded rigid category of spatial partitions $\DD \subseteq \PP^{(m)}$
containing some duality partitions $\sigma_{\circ\bullet}$ and $\sigma_{\bullet\circ}^{-1}$. By the previous theorem,
this quantum group is again equivalent to $G_\nn(\CC)$, where $\CC := \Perm_{\id,\sigma^{-1}}(\DD)$ contains 
the duality partitions
\[
  \Perm_{\id,\sigma^{-1}}(\sigma_{\circ\bullet}) = \partPairWB^{(m)}, \qquad
  \Perm_{\id,\sigma^{-1}}(\sigma_{\bullet\circ}^{-1}) = \partPairBW^{(m)}.
\]
\end{proof}

In \Cref{sec:spatial-partition-QGs}, we have shown that for every spatial partition quantum group,
the representation $\overline{u}$ is unitary and conjugate to $u$. However, 
it remained open if we can choose $u^\bullet = \overline{u}$ and obtain again a spatial partition quantum group. 
The previous corollary now answers this positively.
Since $\partPairWB^{(m)} = (\id_{S_m})_{\circ\bullet}$, the previous corollary and
\Cref{prop:spatial-sub-U} show that any spatial partition quantum group is equivalent to 
a spatial partition quantum group with 
\[
    u^\bullet = \overline{F}_{\id_{S_m}}^{(\nn)} \cdot \overline{u} \cdot {\big( \overline{F}_{\id_{S_m}}^{(\nn)} \big)}^{-1} = \overline{u}.
\]
Thus, we can always choose $u^\bullet = \overline{u}$ without loss of generality in the setting of spatial partition quantum groups.

\section{Projective spatial partition quantum groups}\label{sec:proj-version}

In the following, we use the functor $\Flat_{m,z}$
from \Cref{sec:functor-flat} to show that 
the class of spatial partition quantum groups is closed under taking projective versions 
or more generally taking tensor powers of $u^\circ$ and $u^\bullet$.
We then use this result  
to compute the spatial partition quantum groups corresponding
to the categories $P_2^{(m)}$ of spatial pair partition on $m$ levels.
Further, we show that all projective versions of easy quantum groups are again spatial partition quantum groups.
A result of Gromada~\cite{gromada22a} then allows us to describe these projective versions explicitly in terms of generators and relations,
if the underlying quantum group has a degree of reflection two.

\subsection{Tensor powers of spatial partition quantum groups}\label{sec:spatial-tensor-powers}

If $w$ is a unitary representation of a compact matrix quantum group, then 
its matrix coefficients can be used to define a new compact matrix quantum group.
Now, assume $u$ is the fundamental representation of a quantum group $G$ and 
$\overline{u}$ is also unitary. Then $w := u \otop \overline{u}$ 
yields the projective version of $G$ denoted by $PG$. If $G$ is a classical matrix group, 
then $PG$ corresponds exactly to the classical projective version 
\[
    PG := G / (G \cap \{ \lambda I \mid \lambda \in \C\}).
\]
For more information on the projective versions of compact matrix quantum groups, see for example~\cite{banica10b, gromada22a, banica23}.

More generally, we can use any powers of the fundamental representation
$u =: u^\circ$ and a unitary representation $u^\bullet$ to construct new quantum groups.

\begin{definition}
Let $z \in \colors$ with $\abs{z} \geq 1$ and $G$ be a compact matrix quantum group with fundamental representation $u^\circ$ and
unitary representation $u^\bullet$.
Then define $G^{z} := (A, w)$, where 
$A \subseteq C(G)$ is the $C^*$-algebra generated by the matrix coefficients of $w := u^{z}$. 
\end{definition}

Note that if $u^\bullet = \overline{u^\circ}$, then the projective version $PG$ is
exactly given by $G^{\circ\bullet}$. Further, we have shown in the previous sections that 
$\overline{u^\circ}$ is unitary 
for all spatial partition quantum groups and that we can choose $u^\bullet = \overline{u^\circ}$ without loss of generality. 
Thus, the projective version of every spatial partition quantum group $G$ is well-defined and given by $G^{\circ\bullet}$.

In the following, we show that if $G$ is a spatial partition
quantum group with corresponding category $\CC \subseteq \PP^{(m)}$, 
then $G^z$ is again a spatial partition quantum group for all $z \in \colors$ with $\abs{z} \geq 1$. 
Moreover, the corresponding category of $G^z$ is given by $\Flat^{-1}_{m,z}(\CC)$, where $\Flat_{m,z}$
is the functor from \Cref{sec:functor-flat}. Here, the fact that $\Flat^{-1}_{m,z}(\CC)$ is again 
a rigid category of spatial partition relies on our new duality partition, see \Cref{prop:flat-category}.

Since the functor $\Flat^{-1}_{m,z}$ does not preserve colors, 
our first step is to show that 
the representations $w^x$ and $u^{\Flat_{m,z}(x)}$ are equivalent for all $x \in \colors$, 
where $w^\circ := u^z$ and $w^\bullet := \overline{u^z}$.

\begin{lemma}\label{lem:u-flat-equiv}
Let $z \in \colors$ with $d:= \abs{z} \geq 1$ and consider a compact matrix quantum group with fundamental
representation $u^\circ$ on $V$ and unitary representation $u^\bullet$ on $\overline{V}$. 
Define $w^\circ := u^z$, $w^\bullet := \overline{u^z}$ and the unitaries
\begin{align*}
  Q_\circ   \colon {V}^{\otimes z} \to {V}^{\otimes z}, \quad v_1 \otimes \dots \otimes v_d \mapsto v_1 \otimes \dots \otimes v_d \quad \forall v_1, \dots, v_d \in V, \\
  Q_\bullet \colon \overline{V^{\otimes z}} \to {V}^{\otimes \overline{z}}, \quad \overline{v_1 \otimes \dots \otimes v_d} \mapsto \overline{v_d} \otimes \dots \otimes \overline{v_1} \quad \forall v_1, \dots, v_d \in V.
\end{align*}
Then
\[
  Q_x := \bigotimes_{i=1}^{\abs{x}} Q_{x_i} \in \Hom\big(w^x, u^{\Flat_{m,z}(x)}\big)
\]
for all $x \in \colors$ and $m \in \N$.
\end{lemma}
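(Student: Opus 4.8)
The plan is to induct on the word length $\abs{x}$, reducing the claim to the two single-letter cases $x = \circ$ and $x = \bullet$. For the inductive step I would write $x = x'x''$ with $x', x''$ shorter words; by definition $Q_{x} = Q_{x'} \otimes Q_{x''}$, while the multiplicativity of $\Flat_{m,z}$ on colors gives $w^{x} = w^{x'} \otop w^{x''}$ and $u^{\Flat_{m,z}(x)} = u^{\Flat_{m,z}(x')} \otop u^{\Flat_{m,z}(x'')}$. Hence, once $Q_{x'}$ and $Q_{x''}$ are known to be intertwiners, property~(5) of \Cref{prop:reprs-monoidal-cat} (closure of intertwiners under $\otimes$ on morphisms and $\otop$ on representations) yields $Q_{x} = Q_{x'} \otimes Q_{x''} \in \Hom(w^{x}, u^{\Flat_{m,z}(x)})$. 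The empty word $x = 1$ is trivial, with $Q_1 = \id_{\C}$ and both sides the trivial representation.

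The base case $x = \circ$ is immediate: here $Q_\circ = \id_{V^{\otimes z}}$ and $w^\circ = u^z = u^{\Flat_{m,z}(\circ)}$, so $Q_\circ \in \Hom(u^z, u^z)$. The genuine content is the case $x = \bullet$, where one must show that the flip-and-conjugate map $Q_\bullet$ intertwines $w^\bullet = \overline{u^z}$ with $u^{\Flat_{m,z}(\bullet)} = u^{\overline{z}}$, recalling that $\overline{z} = \overline{z_d} \cdots \overline{z_1}$.

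For this, the key principle is that conjugation reverses the order of a $\otop$-product. Concretely, for any two representations $v$ and $w$ the flip $\Sigma \colon \overline{A \otimes B} \to \overline{B} \otimes \overline{A}$, $\overline{a \otimes b} \mapsto \overline{b} \otimes \overline{a}$, satisfies $\Sigma \in \Hom(\overline{v \otop w}, \overline{w} \otop \overline{v})$; this is a one-line computation on matrix coefficients, combining the formula ${(u \otop v)}^{i_1 i_2}_{j_1 j_2} = u^{i_1}_{j_1} v^{i_2}_{j_2}$ with the identity $(v^{i}_{j} w^{k}_{l})^* = (w^{k}_{l})^* (v^{i}_{j})^*$. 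Applying this flip repeatedly (formally, by a second induction on $d := \abs{z}$) to peel off the factors of $u^z = u^{z_1} \otop \cdots \otop u^{z_d}$, and using closure under composition from \Cref{prop:reprs-monoidal-cat}, shows that the iterated flip-and-conjugate map — which is exactly $Q_\bullet$ — lies in $\Hom\big(\overline{u^z},\ \overline{u^{z_d}} \otop \cdots \otop \overline{u^{z_1}}\big)$. Finally, since $u^\bullet = \overline{u^\circ}$ in this section (the standing convention established in the discussion preceding the definition of $G^z$), one has $\overline{u^{z_i}} = u^{\overline{z_i}}$ for every letter, so the target above is precisely $u^{\overline{z_d}} \otop \cdots \otop u^{\overline{z_1}} = u^{\overline{z}}$, completing the base case.

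The main obstacle is exactly this base case $x = \bullet$. The only real subtlety is careful bookkeeping of the order reversal — which is precisely the reason $\Flat_{m,z}$ was defined to reverse black points — together with the canonical double-conjugate identifications $\overline{\overline{V}} \cong V$ that appear whenever a letter $z_i$ is already black. Everything else in the argument is formal, driven by the monoidal structure of intertwiner spaces and the multiplicativity of $\Flat_{m,z}$ on colors.
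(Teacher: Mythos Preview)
Your proof is correct and follows essentially the same approach as the paper. Both arguments reduce to the single-letter cases and then extend via closure of intertwiner spaces under tensor products together with the multiplicativity of $\Flat_{m,z}$ on colors; the only stylistic difference is that for $x = \bullet$ the paper carries out a direct matrix-coefficient computation of $\overline{u^z}$ versus $u^{\overline{z}}$, whereas you package the same identity as the intertwining property of the flip $\Sigma \in \Hom(\overline{v \otop w}, \overline{w} \otop \overline{v})$ and iterate. You are also right to flag the implicit hypothesis $u^\bullet = \overline{u^\circ}$: the paper's own computation $({(u^{z_i})}^{i}_{j})^* = {(u^{\overline{z_i}})}^{i}_{j}$ uses exactly this, and indeed for $z = \circ$, $x = \bullet$ the claim $Q_\bullet \in \Hom(\overline{u^\circ}, u^\bullet)$ forces it.
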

\begin{proof}
First, consider the case $x = \circ$. Then
$w^\circ = u^{z} = u^{\Flat_{m,z}(\circ)}$
and $Q_\circ = \id_{V}^{\otimes z} \in \Hom(u^{z}, u^{z})$.
Next, consider the case $x = \bullet$
and choose an orthonormal basis ${(v_i)}_{i\in I}$
of $V$ inducing canonical bases for $\overline{V^{\otimes z}}$
and ${V}^{\otimes \overline{z}}$ indexed by $i_1, \dots, i_d \in I$.
With respects to these bases, the matrix entries of $w^\bullet = \overline{u^{z}}$ are given by
\[
  {(\overline{u^{z}})}^{i_1, \dots, i_d}_{j_1, \dots, j_d}
  =
  {\big({(u^{z})}^{i_1, \dots, i_d}_{j_1, \dots, j_d}\big)}^*
  = 
  {\big({(u^{z_1})}^{i_1}_{j_1} \cdots {(u^{z_d})}^{i_d}_{j_d}\big)}^*
  =  
  {(u^{\overline{z_d}})}^{i_d}_{j_d} \cdots {(u^{\overline{z_1}})}^{i_1}_{j_1}
\]
for all $i_1, \dots, i_d, j_1, \dots, j_d \in I$.
On the other hand, we have $u^{\Flat_{m,z}(\bullet)} = u^{\overline{z}}$ and
\[
  {(u^{\overline{z}})}^{i_1, \dots, i_d}_{j_1, \dots, j_d}
  =
  {(u^{\overline{z_d}})}^{i_1}_{j_1} \cdots {(u^{\overline{z_d}})}^{i_d}_{j_d}.
\]
Thus, performing a change of basis using $Q_\bullet$ yields 
$\overline{u^{z}} = Q_\bullet^{-1} \cdot u^{\overline{z}} \cdot Q_\bullet$, which is equivalent
to $Q_\bullet \in \Hom(w^\bullet, u^{\Flat_{m,z}(\bullet)})$.

Finally, consider an arbitrary $x \in \colors$. 
The previous computations show that $Q_{x_i} \in \Hom(w^{x_i}, u^{\Flat_{m,z}(x_i)})$
for all $1 \leq i \leq \abs{x}$.
Since intertwiner spaces are closed under tensor products and $\Flat_{m,z}$ is a functor,
this implies
\[
  Q_x := \bigotimes_{i=1}^{\abs{x}} Q_{x_i} \in \Hom\big(\otop_{i=1}^{\abs{x}} w^{x_i}, \otop_{i=1}^{\abs{x}} u^{\Flat_{m,z}(x_i)}\big) = 
  \Hom\big(w^{x}, u^{\Flat_{m,z}(x)}\big).
\]
\end{proof}

\begin{remark}\label{rem:Gz-conj-repr}
As before, let $z \in \colors$ with $\abs{z} \geq 1$ and $G$ be a compact matrix quantum group with fundamental representation $u^\circ$
and unitary representation $u^\bullet$. 
Since both $u^{\overline{z}}$ and $Q_\bullet$ are unitary, the previous lemma also shows that 
\[
  w^\bullet := \overline{u^{z}} = Q_\bullet^{-1} \cdot u^{\overline{z}} \cdot Q_\bullet
\]
is unitary. Therefore, \Cref{prop:u-bullet-impl-conj} implies that $w^\bullet = \overline{u^{z}}$ is conjugate to the fundamental
representation $w^\circ = u^z$ of $G^z$. 
\end{remark}

\begin{remark}\label{rem:equiv-rewrite-hom}
Let $u$ and $v$ be unitary representations of a compact matrix quantum group $G$ and assume there exists 
a unitary $Q \in \Hom(u, v)$. Then $u = Q^{-1} v Q$ and we can write 
\[
  \Hom(u, w) = \Hom(v, w) \cdot Q, 
  \qquad 
  \Hom(w, u) = Q^{-1} \cdot \Hom(w, v)
\]
for any unitary representation $w$ of $G$.
\end{remark}

Next, we show that for any spatial partition $p$
the linear maps $T_{\Flat_{m,z}(p)}^{(\nn)}$ and $T_p^{(\nn\cdots\nn)}$ 
agree up to a change of basis. Here, $\nn\cdots\nn$ denotes the $d$-fold repetition of $\nn$
as in \Cref{prop:flat-category}.

\begin{lemma}\label{lem:flat-inter}
Let $\nn \in \N^m$ and $z \in \colors$ with $d := \abs{z} \geq 1$.
Then 
\[
  Q_{y}^{-1} \cdot T_{\Flat_{m,z}(p)}^{(\nn)} \cdot Q_{x} = {(S^{-1})}^{\otimes y} \cdot T_p^{(\nn\cdots\nn)} \cdot S^{\otimes x}
\]
for all $p \in \PP^{(m \cdot d)}(x, y)$, where $Q_x, Q_y$ are defined in \Cref{lem:u-flat-equiv} for $V = \C^n$ and
\[
  S \colon {(\C^{\nn})}^{\otimes z} \to \C^{\nn \cdots \nn},
  \quad 
  e_{\ii_1}^{z_1} \otimes \dots \otimes e_{\jj_d}^{z_d} \mapsto e_{\ii_1} \otimes \dots \otimes e_{\ii_d}
  \quad 
  \forall \ii_1, \dots, \ii_d \in [\nn].
\]
\end{lemma}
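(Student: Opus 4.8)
The plan is to prove the identity by comparing matrix coefficients of both sides in the canonical product bases of the representation spaces of $w^x$ and $w^y$. Since $S$ maps an orthonormal basis to an orthonormal basis it is unitary, so $S^{-1} = S^*$; together with $Q_x^{-1} = Q_x^*$ and $Q_y^{-1} = Q_y^*$ this lets me move the outer maps onto the basis vectors in the inner products and reduces everything to tracking how the maps $Q_x$, $Q_y$ and $S^{\otimes x}$, $(S^{-1})^{\otimes y}$ relabel indices. I treat $V = \C^{\nn}$ throughout and write a multi-index $\boldsymbol{l} \in [\nn\cdots\nn]$ as a $d$-tuple $(\ii_1, \dots, \ii_d)$ with $\ii_r \in [\nn]$, so that a level $a = b + (r-1)m \in \{1, \dots, md\}$ of a point of $p$ splits into a copy index $r \in \{1, \dots, d\}$ and an inner level $b \in \{1, \dots, m\}$.

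First I would identify each change-of-basis map as an index relabeling. The map $S$ realizes the canonical identification $[\nn]^{\times d} \cong [\nn\cdots\nn]$ and, at the level of labels, is the identity (it only forgets the conjugation bars on the letters where $z_r = \bullet$); hence the right-hand side is nothing but $T_p^{(\nn\cdots\nn)}$ read off with its index labels grouped into blocks of size $d$. For $Q$ the white part is trivial, $Q_\circ = \id$, while $Q_\bullet$ reverses the order of the $d$ tensor legs. Consequently, applying $Q_x$ sends the $r$-th copy index $\jj^{(s)}_r$ of the $s$-th factor to the point $(s-1)d + r$ of $\Flat_{m,z}(p)$ when $x_s = \circ$, and to the point $(s-1)d + (d+1-r)$ when $x_s = \bullet$.

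The heart of the argument is to check that this assignment agrees with the bijection $\varphi^{x,y}_{m,d}$ from \Cref{def:bij-varphi} used to define $\Flat_{m,z}$. I would trace the upper point $(s, b+(r-1)m)$ of $p$, which carries the label $(\jj^{(s)}_r)_b$ on the right-hand side, through $\varphi^{x,y}_{m,d}$: in the white case $(xy)_s = \circ$ it is sent to $((s-1)d + r, b)$, while in the black case $(xy)_s = \bullet$ it is sent to $(sd - (r-1), b) = ((s-1)d + (d+1-r), b)$. In both cases this is exactly the point to which $Q_x$ assigns the label $(\jj^{(s)}_r)_b$, so the two labelings coincide point for point; the order reversal $i \cdot d - k$ built into $\varphi$ in the black case is compensated precisely by the reversal in $Q_\bullet$. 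The lower points are handled identically, using $Q_y^{-1} = Q_y^*$ and $(S^{-1})^{\otimes y \, *} = S^{\otimes y}$ to push these maps onto the output basis vectors.

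Finally, since $\varphi^{x,y}_{m,d}$ maps the blocks of $p$ bijectively onto the blocks of $\Flat_{m,z}(p)$ and, by the above, the label attached to each point of $\Flat_{m,z}(p)$ on the left-hand side equals the label of the corresponding point of $p$ on the right-hand side (under the identification $\jj \leftrightarrow \ii$), the block-constancy functions $\delta_{\Flat_{m,z}(p)}$ and $\delta_p$ evaluate to the same value. Hence both sides have identical matrix coefficients and the identity follows. I expect the main obstacle to be purely combinatorial bookkeeping: verifying that the three separate reversals — the one in $\Flat_{m,z}(\bullet) = \overline{z_d}\cdots\overline{z_1}$, the one in $\varphi^{x,y}_{m,d}$, and the one in $Q_\bullet$ — are mutually consistent, which is exactly the point where the reverse-order convention adopted in \Cref{def:bij-varphi} is needed.
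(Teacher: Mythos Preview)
Your proposal is correct and follows essentially the same route as the paper: both sides are compared entrywise in the product bases, with $S$ acting as the identity on labels and $Q_\bullet$ reversing the $d$ tensor legs, after which equality reduces to the observation that the relabeling induced by $Q_x,Q_y$ coincides with the bijection $\varphi^{x,y}_{m,d}$ defining $\Flat_{m,z}$. The paper's proof is more terse---it packages the $Q$-action into ``slice'' multi-indices $\ii^{(\beta)}_\alpha,\jj^{(\beta)}_\alpha$ and then asserts the final equality of the two $\delta$-functions ``by the definition of $\Flat_{m,z}$''---whereas you actually compute $\varphi^{x,y}_{m,d}(s,\,b+(r-1)m)$ in the two color cases and match it against the $Q$-relabeling, which is arguably the cleaner way to see why the reverse-order convention in \Cref{def:bij-varphi} is exactly what is needed.
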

\begin{proof}
Let $p \in \PP^{(m \cdot d)}(x, y)$. Define $k := \abs{x}$, $\ell := \abs{y}$ and let $\ii_1, \dots, \ii_{\ell}, \jj_1, \dots, \jj_{k} \in [\nn \cdots \nn]$ be of the form 
\[\begin{aligned}
  \ii_{\alpha} &:= (i_{\alpha, 1, 1}, \dots, i_{\alpha, 1, m}, \dots, i_{\alpha, d, 1}, \dots, i_{\alpha, d, m}), \\
  \jj_{\alpha} &:= (j_{\alpha, 1, 1}, \dots, j_{\alpha, 1, m}, \dots, j_{\alpha, d, 1}, \dots, j_{\alpha, d, m}).
\end{aligned}\]
For $1 \leq \beta \leq d$, define the slices
\[  
  \ii^{(\beta)}_{\alpha} := \begin{cases}
      (i_{\alpha, \beta, 1}, \ldots, i_{\alpha, \beta, m}) & \text{if $x_\alpha = \circ$}, \\
      (i_{\alpha, \beta, m}, \ldots, i_{\alpha, \beta, 1}) & \text{if $x_\alpha = \bullet$},
  \end{cases}
  \quad
  \jj^{(\beta)}_{\alpha} := \begin{cases}
    (j_{\alpha, \beta, 1}, \ldots, j_{\alpha, \beta, m}) & \text{if $y_\alpha = \circ$}, \\
    (j_{\alpha, \beta, m}, \ldots, j_{\alpha, \beta, 1}) & \text{if $y_\alpha = \bullet$}.
\end{cases}
\]
Then by the definition of $S$, we have 
\[
  {\big({(S^{-1})}^{\otimes y} \cdot T_p^{(\nn\cdots\nn)} \cdot S^{\otimes x}\big)}_{\jj_1, \dots, \jj_k}^{\ii_1, \dots, \ii_\ell}
  =
  {\big(T_p^{(\nn\cdots\nn)}\big)}_{\jj_1, \dots, \jj_k}^{\ii_1, \dots, \ii_\ell}
  =
  {(\delta_p)}_{\ii_1, \dots, \ii_\ell}^{\jj_1, \dots, \jj_k}.
\]
On the other hand, the definition of $Q_x$ and $Q_y$ yields that
\[
  {\big(Q_{y}^{-1} \cdot T_{\Flat_{m,z}(p)}^{(\nn)} \cdot Q_{x}\big)}_{\jj_1, \dots, \jj_k}^{\ii_1, \dots, \ii_\ell}
  =
  {\big(T_{\Flat_{m,z}(p)}^{(\nn)}\big)}_{\jj_1^{(1)}, \dots, \jj_1^{(d)}, \dots, \jj_k^{(1)}, \dots, \jj_k^{(d)}}^{\ii_1^{(1)}, \dots, \ii_1^{(d)}, \dots, \ii_\ell^{(1)}, \dots, \ii_\ell^{(d)}}
\]
which is given by
\[
  {\big(T_{\Flat_{m,z}(p)}^{(\nn)}\big)}_{\jj_1^{(1)}, \dots, \jj_1^{(d)}, \dots, \jj_k^{(1)}, \dots, \jj_k^{(d)}}^{\ii_1^{(1)}, \dots, \ii_1^{(d)}, \dots, \ii_\ell^{(1)}, \dots, \ii_\ell^{(d)}}
    =
    {(\delta_{\Flat_{m,z}(p)})}_{\ii_1^{(1)}, \dots, \ii_1^{(d)}, \dots, \ii_\ell^{(1)}, \dots, \ii_\ell^{(d)}}^{\jj_1^{(1)}, \dots, \jj_1^{(d)}, \dots, \jj_k^{(1)}, \dots, \jj_k^{(d)}}.
\]
The statement of the lemma now follows, since
\[
  {(\delta_p)}_{\ii_1, \dots, \ii_\ell}^{\jj_1, \dots, \jj_k}
  =
  {(\delta_{\Flat_{m,z}(p)})}_{\ii_1^{(1)}, \dots, \ii_1^{(d)}, \dots, \ii_\ell^{(1)}, \dots, \ii_\ell^{(d)}}^{\jj_1^{(1)}, \dots, \jj_1^{(d)}, \dots, \jj_k^{(1)}, \dots, \jj_k^{(d)}}
\]
by the definition of $\Flat_{m,z}$.
\end{proof}

Using the previous two lemmas, we can now prove our main theorem 
and show that quantum groups of the form $G_n(\CC)^z$
are again spatial partition quantum groups.

\begin{theorem}\label{thm:proj-version}
Let $\CC \subseteq \PP^{(m)}$ be a $\nn$-graded rigid category of spatial partitions and $z \in \colors$ with $\abs{z} \geq 1$. 
Then ${G_{\nn}(\CC)}^{z}$ is equivalent to $G_{\nn \cdots \nn}(\DD)$ with $\DD := \Flat_{m,z}^{-1}(\CC)$.
\end{theorem}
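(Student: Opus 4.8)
The plan is to exhibit a single unitary realizing the equivalence of \Cref{def:equivalence}, namely the reindexing map $S$ from \Cref{lem:flat-inter}, and to check that the intertwiner spaces of ${G_{\nn}(\CC)}^{z}$ and of $G_{\nn\cdots\nn}(\DD)$ agree after conjugating by $S$. First I would record that both quantum groups are well defined: the category $\DD = \Flat_{m,z}^{-1}(\CC)$ is a $(\nn\cdots\nn)$-graded rigid category of spatial partitions by \Cref{prop:flat-category}, so $G_{\nn\cdots\nn}(\DD)$ exists by \Cref{thm:spatial-tannaka-krein}; and $w^\bullet := \overline{u^z}$ is a unitary conjugate of $w^\circ := u^z$ by \Cref{rem:Gz-conj-repr}, so ${G_{\nn}(\CC)}^{z}$ is a compact matrix quantum group with fundamental representation $w^\circ$ and conjugate representation $w^\bullet$.

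Writing $G := G_{\nn}(\CC)$, the key preliminary observation is that the matrix coefficients of every representation $w^x$ lie in $C(G^z) \subseteq C(G)$ (the coefficients of $w^\circ = u^z$ generate $C(G^z)$, and those of $w^\bullet = \overline{u^z}$ are their adjoints), so the relation $T w^x = w^y T$ holds in $B(\cdot)\otimes C(G^z)$ if and only if it holds in $B(\cdot)\otimes C(G)$; hence $\Hom_{G^z}(w^x, w^y) = \Hom_{G}(w^x, w^y)$ for all $x, y \in \colors$. This lets me compute all intertwiners inside the ambient quantum group $G$. Next I would invoke \Cref{lem:u-flat-equiv}, which supplies unitary intertwiners $Q_x \in \Hom_{G}\big(w^x, u^{\Flat_{m,z}(x)}\big)$, and apply \Cref{rem:equiv-rewrite-hom} twice to obtain $\Hom_{G}(w^x, w^y) = Q_y^{-1}\cdot\Hom_{G}\big(u^{\Flat_{m,z}(x)}, u^{\Flat_{m,z}(y)}\big)\cdot Q_x$. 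Since $G = G_{\nn}(\CC)$, the middle space is $\Span\big\{ T_p^{(\nn)} \mid p \in \CC(\Flat_{m,z}(x), \Flat_{m,z}(y))\big\}$.

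The remaining work is a careful reparametrization of this spanning set. Because $\Flat_{m,z}$ is fully faithful, it restricts to a bijection $\DD(x,y) \to \CC(\Flat_{m,z}(x), \Flat_{m,z}(y))$, $q \mapsto \Flat_{m,z}(q)$, so I may index the span by $q \in \DD(x,y)$ and write each generator as $Q_y^{-1}\,T_{\Flat_{m,z}(q)}^{(\nn)}\,Q_x$. Applying \Cref{lem:flat-inter} to each such $q$ turns this into $(S^{-1})^{\otimes y}\,T_q^{(\nn\cdots\nn)}\,S^{\otimes x}$, and taking spans gives
\begin{align*}
  \Hom_{G^z}(w^x, w^y)
  &= (S^{-1})^{\otimes y}\cdot\Span\big\{ T_q^{(\nn\cdots\nn)} \mid q \in \DD(x,y)\big\}\cdot S^{\otimes x} \\
  &= (S^{-1})^{\otimes y}\cdot\Hom_{G_{\nn\cdots\nn}(\DD)}(\tilde u^x, \tilde u^y)\cdot S^{\otimes x}
\end{align*}
for all $x, y \in \colors$, where $\tilde u^\circ, \tilde u^\bullet$ are the fundamental and conjugate representations of $G_{\nn\cdots\nn}(\DD)$. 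By \Cref{def:equivalence} this is precisely the equivalence of ${G_{\nn}(\CC)}^{z}$ and $G_{\nn\cdots\nn}(\DD)$ via the unitary $Q := S$. The main obstacle is bookkeeping rather than a new idea: I must keep the two distinct changes of basis apart — the representation-theoretic unitaries $Q_x$ from \Cref{lem:u-flat-equiv} and the combinatorial reindexing $S$ from \Cref{lem:flat-inter} — and verify that their interaction leaves exactly conjugation by $S$, while also confirming that passing between $G^z$- and $G$-intertwiners and relabelling the spanning family through $\Flat_{m,z}$ introduces no spurious elements.
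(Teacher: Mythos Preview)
Your proposal is correct and follows essentially the same approach as the paper: both proofs use \Cref{lem:u-flat-equiv} with \Cref{rem:equiv-rewrite-hom} to identify $\Hom(w^x,w^y)$ with $Q_y^{-1}\cdot\Hom(u^{\Flat_{m,z}(x)},u^{\Flat_{m,z}(y)})\cdot Q_x$, then reparametrize the spanning set via the full faithfulness of $\Flat_{m,z}$, and finally apply \Cref{lem:flat-inter} to convert to conjugation by $S$. Your additional remarks on well-definedness via \Cref{prop:flat-category} and on why $\Hom_{G^z}(w^x,w^y) = \Hom_G(w^x,w^y)$ make explicit points the paper leaves implicit, but the argument is the same.
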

\begin{proof}
Denote with $u^\circ$, $u^\bullet$ and $\hat{w}^\circ$, $\hat{w}^\bullet$ the 
fundamental representation and conjugate representation of ${G_{\nn}(\CC)}$ and $G_{\nn \cdots \nn}(\DD)$ respectively.
Further, denote with $w^\circ := {(u^\circ)}^z$ the fundamental representation
of $G^z$ and with $w^\bullet := \overline{{(u^\circ)}^z}$ its conjugate representation, see \Cref{rem:Gz-conj-repr}.
Then \Cref{lem:u-flat-equiv} in combination with \Cref{rem:equiv-rewrite-hom} and fact that 
$\Flat_{m,z}$ is fully faithful imply that
\begin{align*}
  \Hom({w}^x, {w}^y)
  &= Q_y^{-1} \cdot \Hom(u^{\Flat_{m,z}(x)}, u^{\Flat_{m,z}(y)}) \cdot Q_x \\
  &= \Span\big\{ Q_y^{-1} \cdot T^{(\nn)}_{p} \cdot Q_x \mid p \in \CC(\Flat_{m,z}(x), \Flat_{m,z}(y)) \big\} \\
  &= \Span\big\{ Q_y^{-1} \cdot T^{(\nn)}_{\Flat_{m,z}(p)} \cdot Q_x \mid p \in \DD \big\}
\end{align*}
for all $x, y \in \colors$. By \Cref{lem:flat-inter}, we have 
\[
  Q_{y}^{-1} \cdot T_{\Flat_{m,z}(p)}^{(\nn)} \cdot Q_{x} = {(S^{-1})}^{\otimes y} \cdot T_p^{(\nn\cdots\nn)} \cdot S^{\otimes x}
\]
for a unitary $S \colon {(\C^{\nn})}^{\otimes z} \to \C^{\nn \dots \nn}$, which yields
\begin{align*}
  \Hom({w}^x, {w}^y)
  &= \Span\big\{ {(S^{-1})}^{\otimes y} \cdot T_p^{(\nn\cdots\nn)} \cdot S^{\otimes x} \mid p \in \DD(x, y) \big\} \\
  &= {(S^{-1})}^{\otimes y} \cdot \Hom({\hat{w}}^x, {\hat{w}}^y) \cdot S^{\otimes x}.
\end{align*} 
Therefore, ${G_{\nn}(\CC)}^{z}$ and $G_{\nn \dots \nn}(\DD)$ are equivalent.
\end{proof}

Since every spatial partition quantum group is equivalent to 
a quantum group of the form $G_{\nn}(\CC)$, we can use the following proposition to transfer the previous theorem to all spatial partition quantum groups.

\begin{proposition}
Let $G$ and $H$ be compact matrix quantum groups with fundamental representations
$u^\circ$, $\hat{u}^\circ$ and conjugate representations $u^\bullet$, $\hat{u}^\bullet$ respectively. 
If $G$ and $H$ are equivalent, then 
$G^z$ and $H^z$ are equivalent for all $z \in \colors$ with $\abs{z} \geq 1$.
\end{proposition}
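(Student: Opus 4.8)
The plan is to produce a single unitary intertwining the relevant Hom-spaces, exactly as demanded by \Cref{def:equivalence}, by transporting the equivalence of $G$ and $H$ through the flattening identifications already established in \Cref{lem:u-flat-equiv}. Fix any $m \in \N$ and let $Q \colon V \to W$ be the unitary witnessing the equivalence of $G$ and $H$, so that $\Hom(u^a, u^b) = {(Q^{-1})}^{\otimes b} \cdot \Hom(\hat u^a, \hat u^b) \cdot Q^{\otimes a}$ for all $a, b \in \colors$. I write $w^\circ := u^z$ and $w^\bullet := \overline{u^z}$ for the fundamental and (by \Cref{rem:Gz-conj-repr}) conjugate representation of $G^z$, and likewise $\hat w^\circ := \hat u^z$, $\hat w^\bullet := \overline{\hat u^z}$ for $H^z$. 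The candidate for the witnessing unitary is $\tilde Q := Q^{\otimes z} \colon V^{\otimes z} \to W^{\otimes z}$, which is unitary as a tensor product of copies of the unitaries $Q$ and $\overline Q$.

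First I would apply \Cref{lem:u-flat-equiv} to $G$ and to $H$ separately. This gives, for every $x \in \colors$, unitaries $Q_x^G \in \Hom(w^x, u^{\Flat_{m,z}(x)})$ and $Q_x^H \in \Hom(\hat w^x, \hat u^{\Flat_{m,z}(x)})$, each of the factorized form $\bigotimes_{i} Q_{x_i}$ with $Q_\circ = \id$ and $Q_\bullet$ the explicit ``reverse-and-conjugate'' unitary of that lemma. Using \Cref{rem:equiv-rewrite-hom} twice I rewrite
\[
  \Hom(w^x, w^y) = {(Q_y^G)}^{-1} \cdot \Hom(u^{\Flat_{m,z}(x)}, u^{\Flat_{m,z}(y)}) \cdot Q_x^G
\]
together with the analogous identity for $H$. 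Substituting the equivalence relation for $G$ and $H$ at the colors $\Flat_{m,z}(x)$ and $\Flat_{m,z}(y)$ then collapses the chain to
\[
  \Hom(w^x, w^y) = \tilde Q_y^{-1} \cdot \Hom(\hat w^x, \hat w^y) \cdot \tilde Q_x,
  \qquad
  \tilde Q_x := {(Q_x^H)}^{-1} \, Q^{\otimes \Flat_{m,z}(x)} \, Q_x^G.
\]

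It then remains to identify $\tilde Q_x$ with $\tilde Q^{\otimes x}$. Since all three factors defining $\tilde Q_x$ are tensor products over the letters of $x$ (using $\Flat_{m,z}(x) = \Flat_{m,z}(x_1)\cdots\Flat_{m,z}(x_{\abs x})$ and the multiplicativity of the power notation), one has $\tilde Q_x = \bigotimes_i \tilde Q_{x_i}$, so it suffices to treat single colors. For $x_i = \circ$ the lemma gives $Q_\circ = \id$, whence $\tilde Q_\circ = Q^{\otimes z} = \tilde Q$. For $x_i = \bullet$ I would carry out the one genuine computation of the proof: composing $Q_\bullet$, the operator $Q^{\otimes \overline z}$ and ${(Q_\bullet)}^{-1}$ on a simple tensor $\overline{v_1 \otimes \dots \otimes v_d}$ and using $Q^{\overline{z_k}}(\overline{v_k}) = \overline{Q^{z_k}(v_k)}$ shows that $\tilde Q_\bullet = \overline{Q^{\otimes z}} = \overline{\tilde Q}$. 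Hence $\tilde Q_x = \tilde Q^{\otimes x}$ and $\tilde Q_y^{-1} = {(\tilde Q^{-1})}^{\otimes y}$, so the displayed identity becomes the defining relation of \Cref{def:equivalence} for the unitary $\tilde Q$; this proves that $G^z$ and $H^z$ are equivalent. The main obstacle is precisely this last bookkeeping in the $\bullet$-case: one must track the conjugate-Hilbert-space identifications and the reversal of tensor factors built into $Q_\bullet$ to confirm that the three base changes combine to exactly $\overline{\tilde Q}$ rather than some other unitary.
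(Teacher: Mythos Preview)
Your proof is correct and follows essentially the same route as the paper: both reduce the equivalence of $G^z$ and $H^z$ to the given equivalence of $G$ and $H$ by conjugating with the unitaries $Q_x$ from \Cref{lem:u-flat-equiv} (via \Cref{rem:equiv-rewrite-hom}) on each side. In fact you are more explicit than the paper at the end: the paper simply displays the composite unitary $Q_y^{-1}\,S^{\otimes\Flat_{1,z}(y)}\,Q_y^H$ and declares equivalence, whereas you actually verify that this composite equals ${(Q^{\otimes z})}^{\otimes y}$ by treating the $\circ$- and $\bullet$-factors separately, which is exactly what \Cref{def:equivalence} requires.
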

\begin{proof}
Since $G$ and $H$ are equivalent, there exists a unitary $S$ such that
\[
  \Hom(u^x, u^y) = S^{\otimes y} \cdot \Hom(\hat{u}^x, \hat{u}^y) \cdot {(S^{-1})}^{\otimes x}
  \qquad \forall x,y \in \colors.
\]
Denote with $w^\circ := {(u^\circ)}^z$ and $\hat{w}^\circ := {(\hat{u}^\circ)}^z$ the fundamental
representations of $G^z$ and $H^z$. Then \Cref{rem:Gz-conj-repr}
shows that $w^\bullet := \overline{{(u^\circ)}^z}$ and $\hat{w}^\bullet := \overline{{(\hat{u}^\circ)}^z}$
are the corresponding conjugate representations.
Let $x, y \in \colors$ and define $\widetilde{x} := \Flat_{1,z}(x)$ and
$\widetilde{y} := \Flat_{1,z}(y)$. Then \Cref{lem:u-flat-equiv} and \Cref{rem:equiv-rewrite-hom} show that 
\begin{align*}
  \Hom(w^x, w^y)  
  & = Q_y^{-1} \cdot \Hom(u^{\widetilde{x}}, u^{\widetilde{y}}) \cdot Q_x \\
  & = Q_y^{-1} \cdot S^{\otimes \widetilde{y}} \cdot \Hom(\hat{u}^{\widetilde{x}}, \hat{u}^{\widetilde{y}}) \cdot {(S^{-1})}^{\otimes \widetilde{x}} \cdot Q_x \\
  & = Q_y^{-1} \cdot S^{\otimes \widetilde{y}} \cdot Q_{\widetilde{y}} \cdot \Hom(\hat{w}^x, \hat{w}^y) \cdot Q_{\widetilde{x}}^{-1} \cdot {(S^{-1})}^{\otimes \widetilde{x}} \cdot Q_x. 
\end{align*}
Thus, $G^z$ and $H^z$ are also equivalent.
\end{proof}

\begin{corollary}\label{corr:tensor-power-spatial}
Let $G$ be a spatial partition quantum group and $z \in \colors$ with $\abs{z} \geq 1$. Then 
$G^{z}$ is a spatial partition quantum group.
\end{corollary}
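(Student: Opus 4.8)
The plan is to chain together the equivalence results established so far, reducing the general statement to the canonical representatives $G_{\nn}(\CC)$ that are already handled by \Cref{thm:proj-version}. First I would invoke \Cref{rem:spatial-equiv-G}, which characterizes spatial partition quantum groups precisely as those compact matrix quantum groups equivalent to some $G_{\nn}(\CC)$ with $\CC \subseteq \PP^{(m)}$ a $\nn$-graded rigid category of spatial partitions. Applying this to $G$ yields such a $\CC$ together with an equivalence between $G$ and $G_{\nn}(\CC)$. Note that this is where we use that a spatial partition quantum group carries a unitary conjugate representation $u^\bullet$, so that the power construction $G^z$ is defined in the first place.

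Next I would transport this equivalence through the power construction. The proposition immediately preceding the corollary shows that equivalence of compact matrix quantum groups is preserved under $(-)^z$ for any $z \in \colors$ with $\abs{z} \geq 1$; hence $G^z$ is equivalent to $G_{\nn}(\CC)^z$. Now \Cref{thm:proj-version} identifies $G_{\nn}(\CC)^z$ up to equivalence with $G_{\nn \cdots \nn}(\DD)$, where $\DD := \Flat_{m,z}^{-1}(\CC)$. The decisive input here is \Cref{prop:flat-category}, which guarantees that $\DD$ is again a rigid category of spatial partitions and moreover $(\nn \cdots \nn)$-graded; this is exactly the point at which the generalized duality partitions are required, since rigidity of the preimage would fail in the framework of Cébron--Weber. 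Consequently $G_{\nn \cdots \nn}(\DD)$ is a spatial partition quantum group by construction.

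Finally I would conclude by transitivity of the equivalence relation on compact matrix quantum groups: $G^z$ is equivalent to $G_{\nn}(\CC)^z$, which in turn is equivalent to $G_{\nn \cdots \nn}(\DD)$, so $G^z$ is equivalent to $G_{\nn \cdots \nn}(\DD)$, and the backward direction of \Cref{rem:spatial-equiv-G} then yields that $G^z$ is itself a spatial partition quantum group.

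Since the genuinely substantial content — the flattening functor, the behaviour of the maps $T_p^{(\nn)}$ under flattening, and the Tannaka--Krein reconstruction of $G_{\nn}(\CC)^z$ — has already been packaged into \Cref{thm:proj-version}, \Cref{prop:flat-category}, and the preceding proposition, I do not expect a genuine obstacle in the corollary itself. The only points requiring care are purely bookkeeping: verifying that the grading $(\nn \cdots \nn)$ produced by flattening is precisely the grading needed to apply \Cref{rem:spatial-equiv-G}, and checking that equivalence is indeed transitive so that the two equivalences may be composed.
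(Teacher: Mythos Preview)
Your proposal is correct and follows essentially the same route as the paper: reduce to $G_{\nn}(\CC)$ via \Cref{rem:spatial-equiv-G}, transport the equivalence through $(-)^z$ with the preceding proposition, and then invoke \Cref{thm:proj-version}. The paper's proof is more terse (it omits the explicit references to \Cref{prop:flat-category} and to transitivity), but the logical skeleton is identical.
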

\begin{proof}
Since $G$ is a spatial partition quantum group, it is equivalent
to $G_\nn(\CC)$ for a $\nn$-graded rigid category of spatial partitions $\CC \subseteq \PP^{(m)}$.
Then the previous proposition shows that $G^z$ is equivalent to $G_\nn(\CC)^z$, which is a spatial partition quantum group by \Cref{thm:proj-version}.
\end{proof}

As a special case, it follows that 
the class of spatial partition quantum groups is closed under taking projective versions.
    
\begin{corollary}\label{corr:proj-spatial-closed}
Let $G$ be a spatial partition quantum group. Then $PG$ is a spatial partition quantum group.
\end{corollary}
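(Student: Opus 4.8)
The plan is to realize $PG$ as a special instance of the tensor-power construction $G^z$ and then invoke \Cref{corr:tensor-power-spatial}. Concretely, I would take $z := \circ\bullet$, which satisfies $\abs{z} = 2 \geq 1$ and hence lies in the range of the corollary, and argue that $PG = G^{\circ\bullet}$.

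The first step is to recall the defining data of $PG$. By construction the projective version is built from the representation $u^\circ \otop \overline{u^\circ}$, so that $PG$ is the compact matrix quantum group whose fundamental representation is $u^{\circ\bullet}$ under the choice $u^\bullet = \overline{u^\circ}$. This identification is legitimate precisely because we established, in the discussion following \Cref{corr:assume-base-partitions}, that for any spatial partition quantum group one may take $u^\bullet = \overline{u^\circ}$ without loss of generality and that $\overline{u^\circ}$ is then unitary. Consequently the projective version is well-defined for every spatial partition quantum group and coincides with $G^{\circ\bullet}$.

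Having made this identification, the second step is immediate: applying \Cref{corr:tensor-power-spatial} with $z = \circ\bullet$ yields that $G^{\circ\bullet}$ is again a spatial partition quantum group, and therefore so is $PG$.

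The main point worth stressing is that there is essentially no obstacle left in the corollary itself, since all the substantive work has already been carried out upstream: \Cref{thm:proj-version} identifies the category of ${G_\nn(\CC)}^z$ as $\Flat_{m,z}^{-1}(\CC)$, and \Cref{prop:flat-category} guarantees that this pre-image is again a $(\nn\cdots\nn)$-graded rigid category of spatial partitions — it is exactly here that the generalized duality partitions are indispensable, as the statement would fail in the setting of Cébron-Weber. The only place requiring genuine care is the bookkeeping of the identification $PG = G^{\circ\bullet}$, namely ensuring that the conjugate representation furnished by the spatial-partition structure agrees with the representation $\overline{u^\circ}$ used to define the projective version; once this matching is made explicit, the conclusion follows directly from \Cref{corr:tensor-power-spatial}.
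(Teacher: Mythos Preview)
Your proposal is correct and follows essentially the same approach as the paper: both proofs invoke \Cref{corr:assume-base-partitions} and the subsequent discussion to justify choosing $u^\bullet = \overline{u^\circ}$, identify $PG$ with $G^{\circ\bullet}$, and then apply \Cref{corr:tensor-power-spatial} with $z = \circ\bullet$. Your additional remarks about the role of \Cref{thm:proj-version} and \Cref{prop:flat-category} are accurate context but are not needed for the corollary itself.
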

\begin{proof}
By \Cref{corr:assume-base-partitions} and the discussion at the end of \Cref{sec:permuting-levels}, 
we can assume that the conjugate representation of $G$ is given by $u^\bullet = \overline{u^\circ}$.
In this case, $PG = G^{\circ\bullet}$, which a spatial partition quantum group by \Cref{corr:tensor-power-spatial}. 
\end{proof}

\subsection{Categories of all spatial pair partitions}\label{sec:P2}
As a first application of the previous theorem, we consider the quantum groups 
associated with the categories $\PP_2^{(m)}$ of all spatial pair partitions on $m$ levels, i.e.\ spatial partition 
on $m$ levels with every block of size two.

It is shown in \cite{banica09} that the
category $\PP_2^{(1)}$ of pair partitions on one level corresponds to the classical orthogonal group $O_n$. Further, the 
author shows in~\cite{faross22} that the category $\PP_2^{(2)}$ of spatial pair partition on two levels corresponds to the classical
projective orthogonal group $PO_n$. In the following, we
determine quantum groups in the remaining cases $m \geq 3$ and show that $G_{(n,\dots,n)}(P_2^{(m)})$ 
is equivalent to $O_n^{\circ \cdots \circ}$.

Note that in contrast to our definition, the categories in~\cite{banica09,faross22} are defined in terms of colorless partitions. 
However, in our setting, $\PP_2^{(m)}$ contains the spatial partition $\partIdBW^{(m)}$ that corresponds to the 
$C^*$-algebraic relations $u^{i}_j = {(u^i_j)}^*$ making the generators self-adjoint.
Thus, our quantum groups $G_{(n,\dots,n)}(\PP^{(m)}_2)$ agree with the corresponding orthogonal quantum groups in~\cite{banica09,faross22}. 
See also~\cite{tarrago17} for more on the relation between orthogonal and unitary easy quantum groups.

\begin{proposition}
  Let $m \geq 1$ and $n \in \N$. Then $G_{(n,\dots,n)}(\PP^{(m)}_2)$ is equivalent to $O_{n}^{z}$ with $z := \circ^m$.
\end{proposition}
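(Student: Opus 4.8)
The plan is to exhibit $\PP_2^{(m)}$ as a flattening pre-image of the one-level pair partitions and then apply \Cref{thm:proj-version}. I would take as base category $\PP_2^{(1)} \subseteq \PP^{(1)}$ and as exponent the monochromatic word $z := \circ^m$, so that $d = \abs{z} = m$ and $\Flat_{1,z} \colon \PP^{(m)} \to \PP^{(1)}$ is the flattening functor of \Cref{sec:functor-flat}. Since $\PP_2^{(1)}$ contains the duality partitions $\partPairWB$ and $\partPairBW$ and every one-level partition is trivially $(n)$-graded, the category $\PP_2^{(1)}$ is $(n)$-graded and rigid, so \Cref{thm:proj-version} applies and shows that $G_{(n)}(\PP_2^{(1)})^{z}$ is equivalent to $G_{(n,\dots,n)}(\DD)$ with $\DD := \Flat_{1,z}^{-1}(\PP_2^{(1)})$.

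Two identifications then remain. First, I would recall that $G_{(n)}(\PP_2^{(1)}) = O_n$: this is the case $m = 1$ treated in~\cite{banica09}, where---as explained at the start of this section---the partition $\partIdBW \in \PP_2^{(1)}$ makes the generators self-adjoint and hence matches the orthogonal (rather than unitary) convention. Consequently $G_{(n)}(\PP_2^{(1)})^{z} = O_n^{\circ^m}$, which is exactly the quantum group in the statement.

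Second, and this is the combinatorial heart of the argument, I would verify $\Flat_{1,\circ^m}^{-1}(\PP_2^{(1)}) = \PP_2^{(m)}$. The point is that $\Flat_{1,z}$ transforms a partition $p = (x, y, \{B_i\})$ only by relabelling its points through the bijection $\varphi^{x,y}_{1,m}$ and replacing each block $B_i$ with its image $\varphi^{x,y}_{1,m}(B_i)$. As $\varphi^{x,y}_{1,m}$ is a bijection of the underlying point set, it neither merges nor splits blocks, so it carries blocks of size two to blocks of size two and vice versa. Hence $\Flat_{1,\circ^m}(p)$ is a pair partition if and only if $p$ is, which means that the pre-image of $\PP_2^{(1)}$ consists precisely of those $p \in \PP^{(m)}$ all of whose blocks have size two, i.e. of $\PP_2^{(m)}$. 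Substituting $\DD = \PP_2^{(m)}$ into the equivalence from \Cref{thm:proj-version} then gives the claim.

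The only genuine obstacle is the second identification, and it is mild: everything reduces to checking that flattening preserves block sizes, which is immediate from the bijectivity of $\varphi^{x,y}_{1,m}$. The colour bookkeeping causes no trouble here because $z = \circ^m$ is monochromatic, so $\Flat_{1,z}$ merely rearranges the white levels and leaves the pair-partition condition untouched.
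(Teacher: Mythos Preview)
Your proof is correct and follows exactly the same route as the paper: identify $\Flat_{1,\circ^m}^{-1}(\PP_2^{(1)}) = \PP_2^{(m)}$ via the block-size preservation of the flattening bijection, recall $G_n(\PP_2^{(1)}) = O_n$, and invoke \Cref{thm:proj-version}. The paper's version is simply terser, omitting the explicit check that $\PP_2^{(1)}$ is rigid and $(n)$-graded and the detailed justification of why $\varphi^{x,y}_{1,m}$ preserves block sizes.
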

\begin{proof}
Since the functor $\Flat_{1,z}$ does not change the size of blocks, pair partitions
are mapped to pair partitions and we have
\[
  \Flat_{1,z}^{-1}(\PP_2^{(1)}) = \PP_2^{(m)}.
\]
Since $G_n(\PP_2^{(1)}) = O_n$, \Cref{thm:proj-version} implies that 
$G_{(n,\dots,n)}(\PP_2^{(m)})$ is equivalent to $O_n^{z}$.
\end{proof}

By relaxing our notion of isomorphism of compact matrix quantum groups,
we can give a more explicit description of the resulting quantum groups $O_n^{\circ\cdots\circ}$. 

\begin{proposition}\label{prop:spatial-pairs-qgs}
  Let $n \in \N$ and $z := \circ^m$ with $m \geq 1$. Then 
  \[
    O_n^{z} = \begin{cases}
      O_n & \text{if $m$ is odd}, \\
      PO_n  & \text{if $m$ is even},
    \end{cases}
  \]
as compact quantum groups.
\end{proposition}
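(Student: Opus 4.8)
The plan is to work entirely inside the commutative $C^*$-algebra $C(O_n)$ and to identify the subalgebra $A = C(O_n^z)$ explicitly. By the coefficient formula for $\otop$, the matrix coefficients of $w := u^z = u^{\circ^m}$ are exactly the degree-$m$ monomials $u^{i_1}_{j_1}\cdots u^{i_m}_{j_m}$ in the self-adjoint generators $u^i_j$ of $C(O_n)$. Hence $A$ is the norm closure of the $*$-subalgebra $B \subseteq \OO(O_n)$ generated by all such degree-$m$ monomials; since the $u^i_j$ are self-adjoint, involution preserves degree-$m$ monomials, so $B$ is simply the span of all products of degree-$m$ monomials, i.e.\ of all monomials whose total degree is a nonnegative multiple of $m$. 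In particular $B$ lies in the part of $\OO(O_n)$ of parity $m \bmod 2$ for the natural $\Z$-grading by degree (consistent mod $2$ because the orthogonality relations are of even degree). The goal is to show that $B$ equals all of $\OO(O_n)$ when $m$ is odd, and equals the even part $\OO(O_n)^{\Z_2} = \OO(PO_n)$ when $m$ is even; taking closures then yields $A = C(O_n)$ resp.\ $A = C(PO_n)$.

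The key computational device is a contraction using the row-orthogonality relation $\sum_{k} u^1_k u^1_k = 1$, which holds in $C(O_n)$. For $m = 2t+1$ odd I would write, for any fixed $i,j$,
\[
  u^i_j = \sum_{k_1,\dots,k_t} u^i_j \, u^1_{k_1} u^1_{k_1} \cdots u^1_{k_t} u^1_{k_t},
\]
where the right-hand side collapses to $u^i_j$ because each inner sum over $k_s$ equals $1$ (using commutativity of $C(O_n)$). Every summand is a monomial of degree $1 + 2t = m$, hence a generator lying in $B$, so $u^i_j \in B$ for all $i,j$; as the $u^i_j$ generate $\OO(O_n)$ this gives $B = \OO(O_n)$ and $A = C(O_n)$. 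For $m = 2t$ even, the even part of $\OO(O_n)$ is generated as an algebra by the degree-two monomials $u^i_j u^k_l$, since any even monomial factors into degree-two blocks, so it suffices to produce each $u^i_j u^k_l$ in $B$; the same trick with $t-1$ inserted pairs,
\[
  u^i_j u^k_l = \sum_{k_1,\dots,k_{t-1}} u^i_j u^k_l \, u^1_{k_1} u^1_{k_1} \cdots u^1_{k_{t-1}} u^1_{k_{t-1}},
\]
exhibits $u^i_j u^k_l$ as a finite sum of degree-$m$ monomials in $B$. Hence $B = \OO(O_n)^{\Z_2} = \OO(PO_n)$ and $A = C(PO_n)$.

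Finally I would record the standard bookkeeping needed to upgrade these $C^*$-algebra identifications to isomorphisms of compact quantum groups: that the even polynomial functions close up to $C(PO_n) = C(O_n)^{\{\pm I\}}$ via Stone--Weierstrass on $O_n/\{\pm I\}$, using $O_n \cap \{\lambda I \mid \lambda \in \C\} = \{\pm I\}$; and that the comultiplication on $A$ is the restriction of the one on $C(O_n)$, so the $*$-isomorphisms $A \cong C(O_n)$ resp.\ $A \cong C(PO_n)$ intertwine the coproducts. This is what promotes the equalities from $C^*$-algebras to compact quantum groups as stated.

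The main obstacle I anticipate is not the contraction identity itself but keeping the two reductions watertight: verifying that the coefficients of $w$ generate precisely $B$ (self-adjointness of the $u^i_j$), that the even part of $\OO(O_n)$ really is algebra-generated in degree two, and that the degree grading is well defined only modulo $2$ because of the orthogonality relations. These are all elementary, but stating them carefully is essential, since the entire content of the proposition is the clean parity dichotomy between the odd and even cases.
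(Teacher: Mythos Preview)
Your proposal is correct and follows essentially the same approach as the paper: both use the orthogonality relation $\sum_k u^1_k u^1_k = 1$ to pad $u^i_j$ (for $m$ odd) or $u^i_j u^k_l$ (for $m$ even) to a sum of degree-$m$ monomials lying in $C(O_n^z)$, thereby showing that the inclusion $C(O_n^z) \hookrightarrow C(O_n)$ resp.\ $C(O_n^z) \hookrightarrow C(PO_n)$ is surjective. Your write-up is somewhat more careful about the surrounding bookkeeping (the $\Z_2$-grading, the identification of $C(PO_n)$ with even functions, compatibility with the coproduct), which the paper leaves implicit.
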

\begin{proof}
Denote with $u$ the fundamental representation of $O_n$.
First, we assume that $m$ is odd, i.e.\ $m = 2k + 1$. Then, consider the
inclusion $C(O_n^{z}) \hookrightarrow C(O_n)$, 
which is injective and respects the comultiplication.
Since $u$ is orthogonal, we compute
\begin{align*}
  \sum_{j_1, \dots, j_k \in [n]} u^{i_0}_{j_0} u^1_{j_1} u^1_{j_1} \dots u^1_{j_k} u^1_{j_k}
  &= u^{i_0}_{j_0} \underbrace{\bigg(\sum_{j_1 = 1}^n  u^1_{j_1} u^1_{j_1}\bigg)}_{=1} \dots \underbrace{\bigg(\sum_{j_k = 1}^n u^1_{j_k} u^1_{j_k} \bigg)}_{=1}
  &= u^{i_0}_{j_0} \in C(O_n^{z}) 
\end{align*}
for all $i_0, j_0 \in [n]$. Thus, the inclusion is also surjective and defines an isomorphism of compact quantum groups.

Next, we assume that $m$ is even, i.e.\ $m = 2k$. Then, we have an inclusion 
$C(O_n^{z}) \hookrightarrow C(PO_n)$ and compute
\[
  \sum_{j_2, \dots, j_k \in [n]} u^{i_0}_{j_0} u^{i_1}_{j_1} u^1_{j_2} u^1_{j_2} \dots u^1_{j_k} u^1_{j_k}
  = u^{i_0}_{j_0} u^{i_1}_{j_1} \underbrace{\bigg(\sum_{j_2 = 1}^n  u^1_{j_2} u^1_{j_2}\bigg)}_{=1} \dots \underbrace{\bigg(\sum_{j_k = 1}^n u^1_{j_k} u^1_{j_k} \bigg)}_{=1}
  = u^{i_0}_{j_0} u^{i_1}_{j_1}
\]
for all $i_0, j_0,i_1,j_1 \in [n]$. Thus, $u^{i_0}_{j_0} u^{i_1}_{j_1} \in C(O_n^{z})$, which shows that
the inclusion is surjective and defines an isomorphism of compact quantum groups.
\end{proof}

\subsection{Projective easy quantum groups}\label{sec:proj-easy-qg}

Consider an easy quantum group $G$. Then its projective version $PG$ might not be an easy quantum group in general.
However, since easy quantum groups are a subclass of spatial partition quantum groups, it follows
from \Cref{thm:proj-version} that projective versions of easy quantum groups are 
again spatial partition quantum groups. Thus, it is always possible to describe projective versions 
of easy quantum groups in terms of spatial partitions.

If $G$ corresponds to a category of partitions $\CC$, then the proof of \Cref{thm:proj-version} shows the category of its projective version 
$PG$ is given by $\Flat_{1,\circ\bullet}^{-1}(\CC)$.
This is particularly useful if the category $\CC$ can be characterized via 
an abstract property as in the case of spatial pair partitions in the previous section.
On the other hand, if the category $\CC$ is only given by a set of generators, then 
it might be difficult in general to obtain a generating set for the category of its projective version.

However, in the special case when $G$ is an orthogonal quantum group with degree of reflection two, Gromada~\cite{gromada22a} provides a result that allows 
us to convert generators of $\CC$ to generators of its projective version.

\begin{proposition}[\cite{gromada22a}]\label{prop:gromada}
Consider an orthogonal compact matrix quantum group $G \subseteq O^+(F)$ 
with fundamental representation on $\C^n$ and degree of reflection two. Consider an admissible generating set $S$ of
$\Rep(G)$ containing the duality morphisms. Then $\Rep(PG)$ is generated by 
$S$ and $\id_{\C^n} \otimes S \otimes \id_{\C^n}$.
\end{proposition}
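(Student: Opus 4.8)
The plan is to realize $\Rep(PG)$ as the ``even part'' of $\Rep(G)$ and then to cut the given generating set down to this even part. Since $G \subseteq O^+(F)$ is orthogonal, the conjugate $\overline{u}$ is equivalent to $u$ via a duality morphism contained in $S$; hence the fundamental representation $w := u \otop \overline{u}$ of $PG$ is equivalent to $u \otop u$, and the $k$-fold $\otop$-power of $w$ is equivalent to $u^{\circ^{2k}}$. Because the matrix coefficients of $w$ generate the subalgebra $C(PG) \subseteq C(G)$, the intertwiner spaces of $PG$ coincide with the corresponding spaces of $G$, so I would identify $\Rep(PG)$ with the full monoidal subcategory of $\Rep(G)$ on the objects $u^{\circ^{2k}}$, with morphism spaces $\Hom_G(u^{\circ^{2k}}, u^{\circ^{2l}})$.

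Next I would exploit that $G$ has degree of reflection two: this means $\Rep(G)$ carries a faithful $\Z/2$-grading in which $u$ lies in the nontrivial degree, equivalently that the central element acting on $u^{\circ^{a}}$ by $(-1)^{a}$ separates representations. The key consequence is that $\Hom_G(u^{\circ^{a}}, u^{\circ^{b}}) = 0$ whenever $a \not\equiv b \pmod 2$. In particular every generator $g \in S$ is parity preserving, and each elementary factor $\id^{\otop i} \otop g \otop \id^{\otop j}$ sends an even object to an even object, so no decomposition into generators can leave the even part by accident. This is precisely what makes the even morphism spaces a well-behaved subcategory and keeps the rewriting in the final step confined to the objects $u^{\circ^{2k}}$.

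The heart of the argument is the generation step. Given an arbitrary even morphism $T$, I would use that $S$ generates $\Rep(G)$ to write $T$ as a composite of elementary factors $\id^{\otop i} \otop g \otop \id^{\otop j}$ with $g \in S$, and show each factor lies in the category generated inside $\Rep(PG)$ by $S$ together with $\id_{\C^n} \otop S \otop \id_{\C^n}$. The mechanism is an alignment of $g$ to the grid of $w = u \otop u$ blocks: padding on either side by $\id_w = \id_{\C^n}^{\otop 2}$ is already available in $\Rep(PG)$, so only the parity of the offsets matters. Factors whose generator begins at a block boundary are obtained from $g \in S$ padded by copies of $\id_w$; factors straddling a boundary are instead obtained from $\id_{\C^n} \otop g \otop \id_{\C^n}$ padded by copies of $\id_w$. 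Placements that do not fit either pattern directly are brought into range using the duality (cup and cap) morphisms of $w$, which are built from the duality morphisms in $S$ and allow one to rotate a generator past a block boundary via the snake relations of \Cref{prop:existance-duals}.

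The step I expect to be the main obstacle is exactly this last bookkeeping: one must treat the cups and caps in $S$ on the same footing as the other generators, handle generators with an odd number of legs (where a single rotation is needed to match the offset parities), verify that the rewriting terminates, and confirm that one buffer strand on each side always suffices, so that the larger family $\id^{\otop i} \otop S \otop \id^{\otop j}$ is never required. This is where the precise definitions of an \emph{admissible} generating set and of degree of reflection enter in an essential way, and for the full verification I would appeal to Gromada~\cite{gromada22a}.
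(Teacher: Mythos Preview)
The paper does not supply its own proof of this proposition: it is quoted verbatim as a result of Gromada~\cite{gromada22a} and used as a black box in the subsequent proposition. In that sense your proposal already goes further than the paper does, and your final sentence (deferring the full verification to~\cite{gromada22a}) matches exactly what the paper does.

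Your sketch of the argument is essentially the right one. One small simplification: once you have observed that every generator $g\in S$ is admissible (even number of upper and lower legs) and that every intermediate object in a factorisation of a nonzero even morphism is itself even (by degree of reflection two), each elementary factor $\id^{\otop i}\otimes g\otimes\id^{\otop j}$ automatically has $i+j$ even. Hence either both $i,j$ are even, in which case the factor is $g$ padded by copies of $\id_w$, or both are odd, in which case it is $\id_{\C^n}\otimes g\otimes\id_{\C^n}$ padded by copies of $\id_w$. So the ``rotation via cups and caps'' step you flag as the main obstacle is not actually needed under the stated admissibility hypothesis; that hypothesis is precisely what removes this bookkeeping. (Note also that the paper's displayed condition ``$k+\ell$ even'' in the definition of degree of reflection two is a typo for ``$k+\ell$ odd''; your reading is the correct one.)
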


Here, an orthogonal 
compact matrix group with fundamental representation $u$ has \textit{degree of reflection two}, if 
\[
  \Hom(u^{\otop k}, u^{\otop \ell}) = \{ 0 \} \quad \forall k, \ell \in \N,\, \text{$k$ + $\ell$ even}.
\]  
Further, a linear map $T \in S$ is called \textit{admissible}, if $T$ is of the form 
$T \colon {(\C^n)}^{\otimes k} \to {(\C^n)}^{\otimes \ell}$ with $k$ and $\ell$ even. 
Note that we can always use the duality morphisms to bring any intertwiner of a degree of reflection two 
quantum group into admissible form. See~\cite{gromada22a} or~\cite{neshveyev13} for further details.

Next, we reformulate \Cref{prop:gromada} in the setting of spatial partition quantum groups, before 
we apply it to concrete examples of easy quantum groups.

\begin{proposition}
Let $n \in \N$ and $\CC \subseteq \PP^{(1)}$ be a category of spatial partitions generated
by ${\partIdBW}$ and a set $\CC_0 \subseteq \PP^{(1)}$ such that
\begin{enumerate}
\item $\CC_0$ contains the partition $\partPair$,
\item every $p \in \CC_0$ has only white points,
\item if $p \in \CC_0$ has upper colors $x$ and lower colors $y$, then $\abs{x}$ and $\abs{y}$ are even.
\end{enumerate}
Then $PG_n(\CC)$ is equivalent to $G_{(n,n)}(\DD)$, where $\DD \subseteq \PP^{(2)}$
is generated by ${\partIdBW}^{(2)}$, $\Flat_{1,\circ\bullet}^{-1}(\CC_0)$ and $\Flat_{1,\circ\bullet}^{-1}(\id_{\circ} \otimes \CC_0 \otimes \id_{\circ})$.
\end{proposition}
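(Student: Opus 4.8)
The plan is to combine the main result \Cref{thm:proj-version} with Gromada's generation principle \Cref{prop:gromada}. First I would note that since $\partIdBW \in \CC$, the coefficients of the fundamental representation of $G := G_n(\CC)$ are self-adjoint, so $G$ is orthogonal and, by \Cref{corr:assume-base-partitions} and the discussion at the end of \Cref{sec:permuting-levels}, we may assume $u^\bullet = \overline{u^\circ}$, whence $PG = G^{\circ\bullet}$. Applying \Cref{thm:proj-version} with $z = \circ\bullet$ (so $d = 2$ and $\nn = (n)$) immediately gives that $PG$ is equivalent to $G_{(n,n)}\big(\Flat_{1,\circ\bullet}^{-1}(\CC)\big)$. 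By \Cref{prop:C-C0-iso} it therefore suffices to establish the equality of categories of spatial partitions
\[
  \Flat_{1,\circ\bullet}^{-1}(\CC) = \big\langle \partIdBW^{(2)},\ \Flat_{1,\circ\bullet}^{-1}(\CC_0),\ \Flat_{1,\circ\bullet}^{-1}(\id_\circ \otimes \CC_0 \otimes \id_\circ) \big\rangle .
\]

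Next I would verify the hypotheses of \Cref{prop:gromada}. To see that $G$ has degree of reflection two, observe that the generating partitions $\partIdBW$, $\partPair$ and the elements of $\CC_0$ all have an even total number of points (one upper and one lower; zero and two; even and even by condition (3)), and that composition, tensor product and involution all preserve this parity. Hence every $p \in \CC(\circ^k,\circ^\ell)$ has $k+\ell$ even, so $\Hom(u^{\otop k}, u^{\otop \ell}) = \{0\}$ whenever $k+\ell$ is odd. The set $S := \{\, T_p^{(n)} \mid p \in \CC_0 \,\}$ is then admissible (each $p \in \CC_0$ has even upper and lower length by (3)), it contains the duality morphism $T_{\partPair}^{(n)}$ by (1), and it generates $\Rep(G)$, since $\CC_0$ together with $\partIdBW$ generates $\CC$ and $p \mapsto T_p^{(n)}$ respects all category operations by \Cref{prop:Tp-ops}.

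\Cref{prop:gromada} then yields that $\Rep(PG)$ is generated, as a rigid monoidal $*$-category on $\C^n$, by $S$ together with $\id_{\C^n} \otimes S \otimes \id_{\C^n}$. The final step is to transport this through the fully faithful functor $\Flat_{1,\circ\bullet}$: under the equivalence of \Cref{thm:proj-version}, the intertwiner $T_p^{(n)}$, regarded as a morphism between tensor powers of $w = u \otop \overline{u}$, corresponds to the spatial partition $\Flat_{1,\circ\bullet}^{-1}(p) \in \Flat_{1,\circ\bullet}^{-1}(\CC_0)$, while $\id_{\C^n} \otimes T_p^{(n)} \otimes \id_{\C^n}$ corresponds to $\Flat_{1,\circ\bullet}^{-1}(\id_\circ \otimes p \otimes \id_\circ)$; the self-adjointness of the coefficients of $w$, i.e.\ $\overline{w} = w$, is recorded by the extra generator $\partIdBW^{(2)}$. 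Since $\Flat_{1,\circ\bullet}$ preserves composition, tensor product and involution, a generating set for $\Rep(PG)$ is carried to a generating set for $\Flat_{1,\circ\bullet}^{-1}(\CC)$, which is exactly the displayed equality and hence the proposition.

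The main obstacle I expect is the coloring bookkeeping forced by $\Flat_{1,\circ\bullet}$: this functor doubles the number of points and sends every colour word to an alternating $\circ\bullet$-pattern, so the white generators in $\CC_0$ must first be recoloured to the appropriate $\circ\bullet$-pattern — which is possible precisely because $\partIdBW \in \CC$ — before $\Flat_{1,\circ\bullet}^{-1}$ can be applied. The delicate point is to check that exactly this recolouring is absorbed by adjoining $\partIdBW^{(2)}$ to the generators, and that Gromada's abstract notion of \enquote{generated by} (closure under composition, tensor product, duals and linear combinations of operators on $\C^n$) matches the spatial-partition notion of the generated category transported along $\Flat_{1,\circ\bullet}$. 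Making this compatibility precise, rather than the parity and admissibility checks, is where the real work lies.
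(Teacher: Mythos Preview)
Your proposal is essentially the same approach as the paper's: both combine \Cref{thm:proj-version} with Gromada's \Cref{prop:gromada}, then transport the resulting generators of $\Rep(PG)$ back to spatial partitions via the machinery around $\Flat_{1,\circ\bullet}$ (specifically \Cref{lem:flat-inter}), adjoining $\partIdBW^{(2)}$ to record orthogonality. One small organisational difference: you reduce to the equality of \emph{partition} categories $\Flat_{1,\circ\bullet}^{-1}(\CC) = \langle \DD_0\rangle$, whereas the paper works directly at the level of representation categories --- this is slightly cleaner, since Gromada's statement concerns $\Rep(PG)$ and the map $p \mapsto T_p^{(n)}$ need not be injective, so the partition-category equality is a stronger claim than what the proposition requires or what the argument actually delivers; but the underlying reasoning and the identification of the coloring/generation compatibility as the delicate point are the same.
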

\begin{proof}
Since $\CC$ contains the partitions ${\partIdBW}$ and $\partPair$, it follows that 
$\CC$ also contains the partitions $\partPairWB$ and $\partPairBW$. Thus, \Cref{prop:universal-are-spatial} implies that 
$G_n(\CC) \subseteq O_n^+$ is an orthogonal quantum group 
and we have $\overline{u} = u$ after identify $\C^n$ with $\overline{\C^n}$. 
Moreover, it follows from \Cref{def:generator-presentation} that the intertwiners
$S:= \{ T^{(n)}_p \mid p \in \CC_0 \}$ generate the category $\Rep(G)$ in the sense of~\cite{gromada22a}.
Therefore, we can apply \Cref{prop:gromada} and obtain that the representation category 
$\Rep(PG)$ is generated $S$ and $\id_{\C^n} \otimes S \otimes \id_{\C^n}$ 
in the sense of~\cite{gromada22a}. 
The proof \Cref{thm:proj-version} and in particular \Cref{lem:flat-inter} now show 
that we can use the functor $\Flat_{1,\circ\bullet}$ translate these new generators back to 
spatial partitions. Further, we have to add again the spatial partition ${\partIdBW}^{(2)}$
to obtain an orthogonal quantum group. Thus, $PG_n(\CC)$ is equivalent to $G_{(n,n)}(\DD)$ 
where $\DD$ is generated by ${\partIdBW}^{(2)}$, $\Flat^{-1}_{1,\circ\bullet}(\CC_0)$
and $\Flat_{1,\circ\bullet}^{-1}(\id_\circ \otimes \CC_0 \otimes \id_\circ)$.
\end{proof}

Now, we can apply the previous proposition to easy quantum group with degree of reflection two.
\Cref{fig:table-gens} presents the result for easy quantum groups described in~\cite{raum16},
see also the appendix of~\cite{volz23}. It contains the generators for the corresponding categories of spatial partitions 
as well as their projective versions. Note that the spatial partitions ${\protect\partIdBW}$ and ${\protect\partIdBW}^{(2)}$ are included implicitly.

\begin{figure}
{\def\arraystretch{1.5}
\begin{center}
\begin{tabular}{c|c|c}
  easy quantum group & generators $\CC_0$ & projective generators $\DD_0$ \\ 
  \hline
  $O_n$        & \partPair, \partCross                          & \partPairUnflat, \partCrossUnflat, \partPairUnflatIds, \partCrossUnflatIds \\
  $O_n^*$      & \partPair, \partTrippleCross                   & \partPairUnflat, \partTrippleCrossUnflat, \partPairUnflatIds, \partTrippleCrossUnflatIds \\
  $O_n^+$      & \partPair                                      & \partPairUnflat, \partPairUnflatIds \\
  $H_n$        & \partPair, \partFour, \partCross               & \partPairUnflat, \partFourUnflat, \partCrossUnflat, \partPairUnflatIds, \partFourUnflatIds, \partCrossUnflatIds \\
  $H_n^*$      & \partPair, \partFour, \partTrippleCross        & \partPairUnflat, \partFourUnflat, \partTrippleCrossUnflat, \partPairUnflatIds, \partFourUnflatIds, \partTrippleCrossUnflatIds \\
  $H_n^+$      & \partPair, \partFour                           & \partPairUnflat, \partFourUnflat, \partPairUnflatIds, \partFourUnflatIds \\
  $S_n'$       & \partPair, \partSingles, \partFour, \partCross & \partPairUnflat, \partSinglesUnflat, \partFourUnflat, \partCrossUnflat, \partPairUnflatIds, \partSinglesUnflatIds, \partFourUnflatIds, \partCrossUnflatIds \\
  $S_n'^+$     & \partPair, \partSingles, \partFour             & \partPairUnflat, \partSinglesUnflat, \partFourUnflat, \partPairUnflatIds, \partSinglesUnflatIds, \partFourUnflatIds \\
  $B_n'$       & \partPair, \partSingles, \partCross            & \partPairUnflat, \partSinglesUnflat, \partCrossUnflat, \partPairUnflatIds, \partSinglesUnflatIds, \partCrossUnflatIds \\
  $B_n'^+$     & \partPair, \partCrossSingles                   & \partPairUnflat, \partCrossSinglesUnflat, \partPairUnflatIds, \partCrossSinglesUnflatIds \\
  $B_n^{\# *}$ & \partPair, \partSingles, \partTrippleCross     & \partPairUnflat, \partSinglesUnflat, \partTrippleCrossUnflat, \partPairUnflatIds, \partSinglesUnflatIds, \partTrippleCrossUnflatIds \\
  $B_n^{\# +}$ & \partPair, \partSingles                        & \partPairUnflat, \partSinglesUnflat, \partPairUnflatIds, \partSinglesUnflatIds \\
\end{tabular}
\end{center}}
\caption{Generators for categories of easy quantum groups and their projective version. The spatial partitions ${\protect\partIdBW}$ and ${\protect\partIdBW}^{(2)}$ are omitted.}
\label{fig:table-gens}
\end{figure}

\Cref{fig:table-gens} does not 
include the quantum groups $S_n$, $B_n$, $S_n^+$ and $B_n^+$ since these 
do not have a degree of reflection two. However, their projective versions depend only on the 
even part of their respective categories of spatial partitions, which are exactly the categories
of $S_n'$, $B_n'$, $S_n'^+$ and $B_n'^+$.

We can now use \Cref{fig:table-gens} and \Cref{def:generator-presentation} to 
describe the $C^*$-algebras 
$A := C(G_{(n,n)}(\DD))$
of the previous projective quantum groups 
as universal unital $C^*$-algebras generated 
by a finite set of relations.
Since the fundamental representation of a projective easy quantum group is defined on $\C^n \otimes \overline{\C^n}$,
it follows that $A$ is generated by the 
coefficients of the matrix $u:= {\big(u^{i_1 i_2}_{j_1 j_2}\big)}$ with $i_1,i_2,j_1,j_2 \in [n]$.
Further, one checks that the category $\DD$ always contains
${\partition[3d]{
    \line{0}{0}{0}{0}{0.5}{0}
    \line{0}{0}{1}{0}{0.5}{1}
    \line{1}{0}{0}{1}{0.5}{0}
    \line{1}{0}{1}{1}{0.5}{1}
    \line{0}{0.5}{0}{1}{0.5}{1}
    \line{0}{0.5}{1}{1}{0.5}{0}
    \point{0}{0}{0}{white}
    \point{0}{0}{1}{white}
    \point{1}{0}{0}{white}
    \point{1}{0}{1}{white}
  }} = (12)_{\circ\circ}$
such that the duality partitions of $\DD$ are given by $(12)_{\circ\bullet}$, $(12)_{\bullet\circ}$
Thus, we have 
$u^\bullet := \overline{F}_{(12)} \overline{u} \overline{F}_{(12)}^{-1}$,
which can be written using matrix coefficients as ${(u^\bullet)}^{i_1 i_2}_{j_1 j_2} = {(u^{i_2 i_1}_{j_2 j_1})}^*$.
The $C^*$-algebra $A$ then satisfies the relations 
\[
    u u^* = u^* u = 1, \qquad u^\bullet {(u^\bullet)}^* = {(u^\bullet)}^* u^\bullet = 1
\] 
making both $u$ and $u^\bullet$ unitary.
Further, the intertwiner $T^{(n,n)}_{{\partIdBW}^{(2)}}$ yields the relation $T^{(n,n)}_{{\partIdBW}^{(2)}} u^\bullet = u T^{(n,n)}_{{\partIdBW}^{(2)}}$
or equivalently 
${(u^{i_2 i_1}_{j_2 j_1})}^* = u^{i_1 i_2}_{j_1 j_2}$.
Finally, the $C^*$-algebra $A$ satisfies the relations 
$T_p^{(n,n)} u^{\otop k} = u^{\otop \ell} T_p^{(n,n)}$ for all $p \in \DD_0$.
For the partitions given in \Cref{fig:table-gens}, these relations can be explicitly
written as follows, where again all indices are quantified over $[n]$.

\begingroup
\allowdisplaybreaks
\begin{alignat*}{8}
  \partSinglesUnflat\colon&\ & 1 &= \sum_{\ell_1, \ell_2} u^{i_1 i_2}_{\ell_1 \ell_2}
  & \qquad \quad
  \partSinglesUnflatIds\colon&\ & u^{i_1 i_4}_{j_1 j_2} &= \sum_{\ell_1, \ell_2} u^{i_1 i_2}_{j_1 \ell_1} u^{i_3 i_4}_{\ell_2 j_2}
  \\
  \partPairUnflat\colon&\ & \delta_{i_1 i_2} &= \sum_{\ell} u^{i_1 i_2}_{\ell \ell} 
  & \qquad \quad 
  \partPairUnflatIds\colon&\ & \delta_{i_2 i_3} u^{i_1 i_4}_{j_1 j_2} &= \sum_{\ell} u^{i_1 i_2}_{j_1 \ell} u^{i_3 i_4}_{\ell j_2}
  \\
  \partCrossUnflat\colon&\ & u^{i_2 i_1}_{j_1 j_2} &= u^{i_1 i_2}_{j_2 j_1}
  & \qquad \quad
  \partCrossUnflatIds\colon&\ & u^{i_1 i_3}_{j_1 j_2} u^{i_2 i_4}_{j_3 j_4} &= u^{i_3 i_4}_{j_1 j_3} u^{i_1 i_2}_{j_2 j_4}
  \\
  \partFourUnflat\colon&\ & \delta_{i_1 i_2} u^{i_1 i_1}_{j_1 j_2} &= \delta_{j_1 j_2} u^{i_1 i_2}_{j_1 j_1}
  & \qquad \quad
  \partFourUnflatIds\colon&\ & \delta_{i_2 i_3} u^{i_1 i_2}_{j_1 j_2} u^{i_2 i_4}_{j_3 j_4} &= \delta_{j_2 j_3} u^{i_1 i_2}_{j_1 j_2} u^{i_3 i_4}_{j_2 j_4}
  \\
  \partCrossSinglesUnflat\colon&\ & \sum_{k} u^{i_2 k}_{j_1 j_2} &= \sum_{\ell} u^{i_1 i_2}_{\ell j_1}
  & \qquad \quad
  \partCrossSinglesUnflatIds\colon&\ & \sum_{k} u^{i_1 i_3}_{j_1 j_2} u^{k i_4}_{j_3 j_4} &= \sum_{\ell} u^{i_1 i_2}_{j_1 \ell} u^{i_3 i_4}_{j_2 j_4}
  \\
  \partTrippleCrossUnflat\colon&\ & \delta_{i_1 i_4} u^{i_3 i_2}_{j_1 j_2}& = \sum_{\ell} u^{i_1 i_2}_{\ell j_2} u^{i_3 i_4}_{j_1 \ell}
  & \qquad \quad
  \partTrippleCrossUnflatIds\colon&\ & \delta_{i_2 i_5} u^{i_1 i_4}_{j_1 j_2} u^{i_3 i_6}_{j_3 j_4} &= \sum_{\ell} u^{i_1 i_2}_{j_1 \ell} u^{i_3 i_4}_{j_3 j_2} u^{i_5 i_6}_{\ell j_4}
\end{alignat*}
\endgroup

\bibliographystyle{amsplain}
\bibliography{paper}

\providecommand{\bysame}{\leavevmode\hbox to3em{\hrulefill}\thinspace}
\providecommand{\MR}{\relax\ifhmode\unskip\space\fi MR }
\providecommand{\MRhref}[2]{%
  \href{http://www.ams.org/mathscinet-getitem?mr=#1}{#2}
}
\providecommand{\href}[2]{#2}
\begin{thebibliography}{10}

\bibitem{banica97}
Teodor Banica, \emph{Le groupe quantique compact libre {U(n)}}, Communications in Mathematical Physics \textbf{190} (1997), 143--172.

\bibitem{banica99}
\bysame, \emph{Symmetries of a generic coaction}, Mathematische Annalen \textbf{314} (1999), 763--780.

\bibitem{banica23}
\bysame, \emph{Introduction to quantum groups}, Springer, 2023.

\bibitem{banica07}
Teodor Banica, Julien Bichon, and Benoit Collins, \emph{The hyperoctahedral quantum group}, Journal of the Ramanujan Mathematical Society \textbf{22} (2007), no.~4, 345--384.

\bibitem{banica10a}
Teodor Banica, Stephen Curran, and Roland Speicher, \emph{Classification results for easy quantum groups}, Pacific Journal of Mathematics \textbf{247} (2010), no.~1, 1--26.

\bibitem{banica09}
Teodor Banica and Roland Speicher, \emph{Liberation of orthogonal {L}ie groups}, Advances in Mathematics \textbf{222} (2009), no.~4, 1461--1501.

\bibitem{banica10b}
Teodor Banica and Roland Vergnioux, \emph{Invariants of the half-liberated orthogonal group}, Annales de l'Institut Fourier \textbf{60} (2010), no.~6, 2137--2164.

\bibitem{blackadar06}
Bruce Blackadar, \emph{Operator algebras. theory of {C}*-algebras and von {N}eumann algebras}, Encyclopaedia of Mathematical Sciences, Springer, 2006.

\bibitem{brauer37}
Richard Brauer, \emph{On algebras which are connected with the semisimple continuous groups}, Annals of Mathematics \textbf{38} (1937), no.~4, 857--872.

\bibitem{cebron16}
Guillaume Cébron and Moritz Weber, \emph{Quantum groups based on spatial partitions}, Annales de la Faculté des sciences de Toulouse \textbf{32} (2023), no.~4, 727--768.

\bibitem{daele96}
Alfons~Van Daele and Shuzhou Wang, \emph{Universal quantum groups}, International Journal of Mathematics \textbf{07} (1996), 255--263.

\bibitem{doplicher98}
Sergio Doplicher and John Roberts, \emph{A new duality theory for compact groups}, Inventiones mathematicae \textbf{98} (1989), no.~1, 157--218.

\bibitem{etingof16}
Pavel Etingof, Shlomo Gelaki, Dmitri Nikshych, and Victor Ostrik, \emph{Tensor categories}, Mathematical Surveys and Monographs, American Mathematical Society, 2016.

\bibitem{faross22}
Nicolas Faroß, \emph{Spatial pair partitions and applications to finite quantum spaces}, Master's thesis, Saarland University, 2022.

\bibitem{freslon19}
Amaury Freslon, \emph{On two-coloured noncrossing partition quantum groups}, Transactions of the American Mathematical Society \textbf{372} (2019), 4471--4508.

\bibitem{freslon23}
Amaury Freslon, Frank Taipe, and Simeng Wang, \emph{{T}annaka–{K}rein reconstruction and ergodic actions of easy quantum groups}, Communications in Mathematical Physics \textbf{399} (2023), 105–172.

\bibitem{freslon16a}
Amaury Freslon and Moritz Weber, \emph{On the representation theory of partition (easy) quantum groups}, Journal für die reine und angewandte Mathematik \textbf{720} (2016), 155–197.

\bibitem{gromada18}
Daniel Gromada, \emph{Classification of globally colorized categories of partitions}, Infinite Dimensional Analysis, Quantum Probability and Related Topics \textbf{21} (2018), no.~4, 1850029.

\bibitem{gromada20a}
\bysame, \emph{Compact matrix quantum groups and their representation theory}, Ph.D. thesis, Saarland University, 2020.

\bibitem{gromada22a}
\bysame, \emph{Presentations of projective quantum groups}, Comptes Rendus. Math\'ematique \textbf{360} (2022), 899--907.

\bibitem{gromada20b}
Daniel Gromada and Moritz Weber, \emph{Intertwiner spaces of quantum group subrepresentations}, Communications in Mathematical Physics \textbf{376} (2020), 81--115.

\bibitem{malacarne16}
Sara Malacarne, \emph{{W}oronowicz's {T}annaka-{K}rein duality and free orthogonal quantum groups}, Mathematica Scandinavica \textbf{122} (2018), no.~1, 151--160.

\bibitem{mang20}
Alexander Mang and Moritz Weber, \emph{Categories of two-colored pair partitions part {I}: Categories indexed by cyclic groups}, The Ramanujan Journal \textbf{53} (2020), 181–208.

\bibitem{mang21c}
\bysame, \emph{Categories of two-colored pair partitions part {II}: Categories indexed by semigroups}, Journal of Combinatorial Theory, Series A \textbf{180} (2021), 105409.

\bibitem{mang21a}
\bysame, \emph{Non-hyperoctahedral categories of two-colored partitions part i: New categories}, Journal of Algebraic Combinatorics \textbf{54} (2021), 475–513.

\bibitem{mang21b}
\bysame, \emph{Non-hyperoctahedral categories of two-colored partitions part {II}: All possible parameter values}, Applied Categorical Structures \textbf{29} (2021), 951–982.

\bibitem{neshveyev13}
Sergey Neshveyev and Lars Tuset, \emph{Compact quantum groups and their representation categories}, Cours Spécialisés, Société Mathématique de France, 2013.

\bibitem{raum14}
Sven Raum and Moritz Weber, \emph{The combinatorics of an algebraic class of easy quantum groups}, Infinite Dimensional Analysis, Quantum Probability and Related Topics \textbf{17} (2014), no.~3, 1450016.

\bibitem{raum16}
\bysame, \emph{The full classification of orthogonal easy quantum groups}, Communications in Mathematical Physics \textbf{341} (2016), 751–779.

\bibitem{tarrago17}
Pierre Tarrago and Moritz Weber, \emph{Unitary easy quantum groups: The free case and the group case}, International Mathematics Research Notices \textbf{2017} (2017), no.~18, 5710–5750.

\bibitem{tarrago18}
\bysame, \emph{The classification of tensor categories of two-colored noncrossing partitions}, Journal of Combinatorial Theory, Series A \textbf{154} (2018), 464--506.

\bibitem{temperly71}
Neville Temperley and Elliott Lieb, \emph{Relations between the ‘percolation’ and ‘colouring’ problem and other graph-theoretical problems associated with regular planar lattices: some exact results for the ‘percolation’ problem}, Proceedings of the Royal Society A \textbf{322} (1971), 251–280.

\bibitem{timmermann08}
Thomas Timmermann, \emph{An invitation to quantum groups and duality: From {H}opf algebras to multiplicative unitaries and beyond}, EMS textbooks in mathematics, European Mathematical Society, 2008.

\bibitem{volz23}
Sebastian Volz, \emph{Design and implementation of efficient algorithms for operations on partitions of sets}, Bachelor's thesis, Saarland University, 2023.

\bibitem{wang95}
Shuzhou Wang, \emph{Free products of compact quantum groups}, Communications in Mathematical Physics \textbf{167} (1995), 671–692.

\bibitem{wang98}
\bysame, \emph{Quantum symmetry groups of finite spaces}, Communications in Mathematical Physics \textbf{195} (1998), 195--211.

\bibitem{weber13}
Moritz Weber, \emph{On the classification of easy quantum groups}, Advances in Mathematics \textbf{245} (2013), 500--533.

\bibitem{weber16}
\bysame, \emph{Easy quantum groups}, Free Probability and Operator Algebras, EMS Press, 2016.

\bibitem{weber17a}
\bysame, \emph{Introduction to compact (matrix) quantum groups and {B}anica-{S}peicher (easy) quantum groups}, Proceedings - Mathematical Sciences \textbf{127} (2017), 881--933.

\bibitem{woronowicz87}
Stanisław Woronowicz, \emph{Compact matrix pseudogroups}, Communications in Mathematical Physics \textbf{111} (1987), 613--665.

\bibitem{woronowicz88}
\bysame, \emph{{T}annaka-{K}rein duality for compact matrix pseudogroups. twisted {SU(N)} groups.}, Inventiones mathematicae \textbf{93} (1988), no.~1, 35--76.

\bibitem{woronowicz91}
\bysame, \emph{A remark on compact matrix quantum groups}, Letters in Mathematical Physics \textbf{21} (1991), 35--39.

\end{thebibliography}

\end{document}